\newcommand{\toweak}{\rightharpoonup}
\newcommand{\toset}{\rightrightarrows}
\def \rla{\right\rangle}
\def \lla{\left\langle}
\def\citep{\cite}
\def\citet{\cite}
\newcommand{\memosolved}[1]{}
 \newcommand{\ds}{\displaystyle}
\newcommand{\D}{\mathbf{D}}
\newcommand{\F}{\mathbf{F}}
\newcommand{\G}{\mathbf{G}}
\renewcommand{\H}{\mathcal{H}}
\newcommand{\J}{\mathbf{J}}
\newcommand{\K}{\mathbf{K}}
\newcommand{\R}{\mathbf{R}}
\newcommand{\JLambda}[1]{J_{\lambda #1}^{\bfLambda}}
\newcommand{\RLambda}[1]{R_{\lambda #1}^{\bfLambda}}
\newcommand{\tildef}{\tilde{f}}
\renewcommand{\a}{\mathbf{a}}
\renewcommand{\b}{\mathbf{b}}
\newcommand{\n}{\mathbf{n}}
\renewcommand{\u}{\mathbf{u}}
\renewcommand{\v}{\mathbf{v}}
\newcommand{\w}{\mathbf{w}}
\newcommand{\x}{\mathbf{x}}
\newcommand{\y}{\mathbf{y}}
\newcommand{\z}{\mathbf{z}}
\newcommand{\I}{\mathcal{I}}
\renewcommand{\J}{\mathcal{J}}
\newcommand{\bfLambda}{\bm{\Lambda}}
\newcommand{\bfDelta}{\bm{\Delta}}
\newcommand{\sigmabf}[1]{\sigma_{_{#1}}}
\newcommand{\sigmahat}{\hat{\sigma} }
\newcommand{\innerlambda}[2]{\lla #1, #2 \rla _{\bfLambda}}
\newcommand{\inner}[2]{\lla #1, #2 \rla}
\newcommand{\norm}[1]{{\left\|{#1}\right\|}}
\newcommand{\normlambda}[1]{\left\|{#1}\right\|_{\bfLambda}}
\newcommand{\dr}[2]{T_{#1,#2}}
\newcommand{\Deltak}[1]{\Delta {#1}^k_i}
\newcommand{\jhsolved}[1]{}
\renewcommand{\Re}{{\rm I}\! {\rm R}}
\DeclareMathOperator*{\argmin}{arg\,min}
\DeclareMathOperator*{\Fix}{Fix}
\DeclareMathOperator*{\interior}{int}
\DeclareMathOperator*{\co}{co}
\DeclareMathOperator*{\ran}{ran}
\DeclareMathOperator*{\dom}{dom}
\DeclareMathOperator*{\gra}{gra}
\DeclareMathOperator*{\Id}{Id}
\DeclareMathOperator*{\bfId}{\bm{\mathrm{Id}}}
\newcommand{\prox}{{\rm prox}}
\DeclareMathOperator*{\zer}{zer}
\newcommand{\sigmaf}[1][]{%
  \sigma_{_{f%
  \ifthenelse{\equal{#1}{}}{}{{}_{\mathrm{#1}}}%
  }}%
}
\newcommand{\Lf}[1][]{%
  L_{_{f%
  \ifthenelse{\equal{#1}{}}{}{{}_{\mathrm{#1}}}%
  }}%
}
\newcommand{\rhof}[1][]{%
  \rho_{_{f%
  \ifthenelse{\equal{#1}{}}{}{{}_{\mathrm{#1}}}%
  }}%
}
  \newcommand{\smartqedmark}{\qed}
  \newcommand{\smartqedmark}{\qedhere}
\crefname{assumption}{Assumption}{Assumptions}
 \numberwithin{equation}{section}
 \newtheorem{assumption}{Assumption}
\newtheorem{theorem}{Theorem}[section]
\newtheorem{definition}[theorem]{Definition}
\newtheorem{proposition}[theorem]{Proposition}
\newtheorem{lemma}[theorem]{Lemma}
\theoremstyle{definition} 	
\newtheorem{example}[theorem]{Example}
\newtheorem{remark}[theorem]{Remark}
\def\TheTitle{Douglas-Rachford algorithm for nonmonotone multioperator inclusion problems}
\title{\TheTitle\thanks{Version of \today. }}
\titlerunning{Douglas--Rachford for nonmonotone inclusion}
\author{Jan Harold Alcantara  \and  Akiko Takeda}
\institute{*Corresponding author: Jan Harold Alcantara \at
	 Center for Advanced Intelligence Project, RIKEN, Tokyo, Japan.  \\	\email{\url{janharold.alcantara@riken.jp}}\\ \text{} \\  
    Akiko Takeda \at Department of Mathematical Informatics, Graduate School of Information Science and Technology, University of Tokyo,
Tokyo, Japan, and Center for Advanced Intelligence Project, RIKEN, Tokyo, Japan. \\
\email{\url{takeda@mist.i.u-tokyo.ac.jp}}
	}
\date{}
 \title{\TheTitle}
 \date{\today}
\author{Jan Harold Alcantara\thanks{\url{janharold.alcantara@riken.jp}.  Center for Advanced Intelligence Project, RIKEN, Tokyo, Japan.} 
\qquad Akiko Takeda\thanks{\url{takeda@mist.i.u-tokyo.ac.jp}.
Department of Mathematical Informatics, Graduate School of Information Science and Technology, University of Tokyo,
Tokyo, Japan, and Center for Advanced Intelligence Project, RIKEN, Tokyo, Japan.}
}
\begin{document}

\maketitle

 \begin{abstract}
The Douglas-Rachford algorithm is a classic splitting method for finding a zero of the sum of two maximal monotone operators. It has also been applied to settings that involve one weakly and one strongly monotone operator. In this work, we extend the Douglas-Rachford algorithm to address multioperator inclusion problems involving $m$ ($m\geq 2$) weakly and strongly monotone operators, reformulated as a two-operator inclusion in a product space. By selecting appropriate parameters, we establish the convergence of the algorithm to a fixed point, from which solutions can be extracted. Furthermore, we illustrate its applicability to  sum-of-$m$-functions minimization problems characterized by weakly convex and strongly convex functions. For general nonconvex problems in finite-dimensional spaces, comprising Lipschitz continuously differentiable functions and a proper closed function, we provide global subsequential convergence guarantees.

   \ifdefined\submit
\keywords{ Douglas-Rachford algorithm; product space reformulation; nonmonotone inclusion; generalized monotone operator}
   	
\else 
 \noindent {\bf Keywords.}\ Douglas-Rachford algorithm; product space reformulation; nonmonotone inclusion; generalized monotone operator
\fi 
\end{abstract}

\section{Introduction}
In this paper, we consider the problem 
\begin{equation}
    \text{Find~}x\in \H ~\text{such that } 0\in A_1 (x) + A_2(x) + \cdots + A_m (x)
    \label{eq:inclusion}
\end{equation}
where $A_1,A_2,\dots,A_m:\H \toset \H$ are set-valued operators on a real Hilbert space $\H$. We assume that each operator  is  accessible through its resolvent, and therefore we focus on so-called \emph{backward algorithms} for solving \eqref{eq:inclusion}. 

A popular backward algorithm for solving \eqref{eq:inclusion} when $m=2$ is the classical \textit{Douglas-Rachford} (DR) algorithm, which was initially proposed in 1956 by Douglas and Rachford \cite{DouglasRachford56a} as a numerical method for solving linear systems related to heat conduction. Later, Lions and Mercier (1979) extended its scope, making it applicable to finding zeros of the sum of two maximal monotone operators \cite{LionsMercier1979}. In particular, it can be used to minimize the sum of two convex functions, as this task is equivalent to finding the zeros of the sum of the subdifferential operators of the functions.

Extensions to non-maximal monotone cases have been explored in subsequent works. For the specific case of a two-term optimization problem involving a weakly convex and a strongly convex function in $\H=\Re^n$,  \cite{GuoHanYuan2017} established that the ``shadow sequence'' of the DR algorithm, with a sufficiently small step size, is globally convergent to the optimal solution when the sum of the functions is strongly convex.  The subdifferential operators of these functions belong to the class of \textit{generalized monotone operators}, which was the central focus of \cite{DaoPhan2019} and \cite{Giselsson2021}. These works specifically extended the analysis of the DR algorithm to accommodate this broader class of operators in real Hilbert spaces (not necessarily finite dimensional), providing convergence guarantees under generalized monotonicity conditions. Specifically, when $A_1$ and $A_2$ are maximal $\sigma_1$-monotone and maximal $\sigma_2$-monotone operators (see \cref{defn:gen_monotone}) with $\sigma_1+\sigma_2 >0$, the shadow sequence of the DR algorithm is guaranteed to globally converge to a zero of $A_1 + A_2$, provided the step size is sufficiently small.

On the other hand, for the $m$-operator inclusion problem \eqref{eq:inclusion}, a traditional strategy is to first reformulate it as a two-operator problem via \textit{Pierra's product space reformulation} \cite{Pierra1976,Pierra1984}:
    \begin{equation}
        \text{Find}~\x \in \H^m ~\text{such that } 0\in \F (\x) + \G(\x),  
        \label{eq:pierra}
    \end{equation}
where $\x = (x_1,\dots,x_m)\in \H^m$, $\F (\x) \coloneqq  A_1(x_1) \times \cdots \times A_m(x_m)$ and $\G \coloneqq N_{\D_m} $, the normal cone operator to $\D_m \coloneqq \{ (x_1,\dots,x_m)\in \H^m: x_1=\cdots = x_m\}$. The defined operators retain key properties:  $\F$ is maximal monotone when each $A_i$ is maximal monotone, while $\G$ is maximal monotone due to the convexity of $\D_m$ \cite[Proposition 26.4]{Bauschke2017}. Consequently, the shadow sequence of the standard DR algorithm applied to \eqref{eq:pierra} is globally convergent to a zero of $\F+\G$, which corresponds to a solution of \eqref{eq:inclusion}. However, one major drawback of the reformulation \eqref{eq:pierra} is its incompatibility with the theory for sum of two generalized monotone operators. Specifically, if $\F$ and $\G$ are maximal $\sigma_{\F}$- and $\sigma_{\G}$- monotone with $\sigma_{\F}+\sigma_{\G} > 0$, then one must have $\sigma_{\F}>0$ since $\sigma_{\G}=0$. On the other hand, $\sigma_{\F} = \min \{ \sigma_1,\dots,\sigma_m\}$ if $A_i$ is maximal $\sigma_i$-monotone (see \cref{prop:F_monotone}). Hence, $\sigma_i>0$ for all $i=1,\dots,m$, making it impossible for the reformulation \eqref{eq:pierra} to handle cases where at least one  $\sigma_i<0$. 

\paragraph{Contributions of this work} In this work, our primary goal is to extend the existing convergence theory for the two-operator inclusion problem involving generalized maximal monotone operators to the case of the 
$m$-operator inclusion problem \eqref{eq:inclusion}. The main contributions are as follows:
\begin{enumerate}[(I)]
    \item We establish the convergence theory for the DR algorithm applied to a certain two-operator reformulation of \eqref{eq:inclusion}, distinct from Pierra's product space reformulation \eqref{eq:pierra}. Specifically, \cref{thm:convergence_optimal} shows that when the operators $A_i$ are maximal $\sigma_i$-monotone such that $\sigma_1+\cdots+\sigma_m >0$, the derived DR algorithm with an appropriate step size achieves global convergence to a fixed point, which corresponds to a solution of \eqref{eq:inclusion}. These results cannot be recovered by directly applying \cite{DaoPhan2019,Giselsson2021} to our reformulation. By contrast, our refined analysis provides stronger guarantees: it relaxes the requirements on the $\sigma_i$ and permits larger step-size ranges, whereas a direct application of \cite{DaoPhan2019,Giselsson2021} would require significantly stricter conditions and yield smaller step sizes (see \cref{rem:comparewithnaive}).
    

    \item A secondary contribution of this work is the introduction of a flexible product space reformulation for \eqref{eq:inclusion} that does not require generalized maximal monotonicity assumptions. Building on Campoy's product space reformulation \cite{Campoy2022}, which originates from \cite{Kruger1985}, the proposed formulation is valid for arbitrary $m$-inclusion problems. Unlike previous approaches, it is independent of (generalized) monotonicity conditions but reduces to Campoy's formulation when generalized monotone operators are present.

    \item We apply our results to sum-of-$m$-functions unconstrained optimization problems (see \eqref{eq:optimization}) involving weakly and strongly convex functions. For general nonconvex problems in finite-dimensional spaces, we prove global subsequential convergence under the condition that all but one function have Lipschitz continuous gradients, with the remaining function being any proper closed function. 
\end{enumerate}

\paragraph{Organization of the paper} In \cref{sec:prelim}, we review some background materials on set-valued operators, generalized monotonicity and extended real-valued functions. We recall Campoy's product space reformulation in \cref{sec:pierra_campoy}, and present our flexible reformulation in \cref{sec:alternative_productspace}. Based on this, the proposed Douglas-Rachford algorithm is presented in \cref{sec:warpedresolvents}. Our convergence analysis and main results for the inclusion problem are presented in \cref{sec:mainresults_inclusion}, and the applications to nonconvex optimization are discussed in \cref{sec:mainresults_optimization}. Concluding remarks are given in \cref{sec:conclusion}.

\section{Preliminaries}
\label{sec:prelim}
Throughout this paper, $\H$ denotes a real Hilbert space endowed with the inner product $\lla \cdot, \cdot \rla$ and induced norm $\|\cdot\|$. For any real numbers $\alpha,\beta\in \Re$ and any $x,y \in \H$, we recall the following identity:
    \begin{equation}
        \norm{\alpha x + \beta y}^2 = \alpha (\alpha +\beta) \norm{x}^2 + \beta (\alpha+\beta) \norm{y}^2 - \alpha\beta \norm{x - y}^2. \label{eq:identity_squarednorm}
    \end{equation}
When $\alpha+\beta \neq 0$, \eqref{eq:identity_squarednorm} is equivalent to 
    \begin{equation}
       \textstyle  \alpha \norm{x}^2 + \beta \norm{y}^2 = \frac{\alpha\beta}{\alpha + \beta}\norm{x - y}^2 + \frac{1}{\alpha + \beta} \norm{\alpha x + \beta y}^2. \label{eq:identity_squarednorm2}
    \end{equation}
A sequence $\{x^k\}$ is said to be \emph{Fej\'{e}r monotone} with respect to a nonempty subset $S\subseteq \H$ if
\ifdefined\submit
$\forall z \in S,~ \forall k\in \mathbb{N}, \quad \norm{x^{k+1}-z } \leq \norm{x^k - z}.$
\else 
\[ \forall z \in S,~ \forall k\in \mathbb{N}, \quad \norm{x^{k+1}-z } \leq \norm{x^k - z}.\]
\fi
We use  $\to$ and $\toweak$ to denote strong and weak convergence, respectively. 

\subsection{Set-valued operators}
 A set-valued operator $A:\H\toset \H$ maps each point $x\in \H$ to a subset $A(x)$ of $\H$, which is not necessarily nonempty. The image of a subset $D\subseteq \H$ is given by $A(D) \coloneqq \bigcup _{x\in D}A(x)$. The \emph{domain} and \emph{range}  of $A$ are given respectively by 
 \ifdefined\submit $\dom (A)  \coloneqq \{ x\in \H: A(x) \neq \emptyset\}$ and $ \ran (A) \coloneqq \{ y\in \H : y\in A(x) ~\text{for some }x\in \H\}$.
 \else 
\begin{align*}
    \dom (A) &  \coloneqq \{ x\in \H: A(x) \neq \emptyset\},\\
    \ran (A) & \coloneqq \{ y\in \H : y\in A(x) ~\text{for some }x\in \H\}. 
\end{align*}
\fi 
The \emph{graph} of $A$ is the subset of $\H\times \H$ given by 
\ifdefined\submit 
$\gra (A) \coloneqq \{ (x,y) \in \H \times \H : y\in A(x)\}.$
\else 
\[\gra (A) \coloneqq \{ (x,y) \in \H \times \H : y\in A(x)\}. \]
\fi 
The \emph{inverse} of $A$, denoted by $A^{-1}$, is the set-valued operator whose graph is given by 
\ifdefined\submit 
$\gra (A^{-1}) = \{ (y,x)\in \H\times \H: (x,y) \in \gra (A)\}.$
\else 
\[ \gra (A^{-1}) = \{ (y,x)\in \H\times \H: (x,y) \in \gra (A)\}. \]
\fi 
The \emph{zeros} and \emph{fixed points} of $A$ are given respectively by
\ifdefined\submit 
$\zer (A)  \coloneqq A^{-1}(0) = \{ x: 0\in A(x)\},$ and $\Fix (A)  \coloneqq \{ x\in \H : x\in A(x)\}$
\else
    \begin{align*}
    \zer (A) & \coloneqq A^{-1}(0) = \{ x: 0\in A(x)\}, \\
    \Fix (A) & \coloneqq \{ x\in \H : x\in A(x)\}.
    \end{align*}
\fi 
The \emph{resolvent} of $A:\H\toset\H$ with parameter $\gamma >0$, denoted by $J_{\gamma A}:\H\toset\H$, is defined by 
$ J_{\gamma A} \coloneqq (\Id+\gamma A)^{-1},$
where $\Id:\H\to\H$ is the identity operator $\Id(x)=x$. The \emph{reflected resolvent} of $A$ with parameter $\gamma>0$ is given by 
$ R_{\gamma A} \coloneqq 2J_{\gamma A} - \Id .$

\subsection{Generalized monotone operators}

\begin{definition}
\label{defn:gen_monotone}
    Let $A:\H\toset \H$ and let $\sigma\in \Re$. We say that $A$ is \emph{$\sigma$-monotone} if 
    \ifdefined\submit
    $\lla x - y, u - v \rla \geq \sigma \norm{x-y}^2 \quad \forall (x,u),(y,v)\in \gra(A) .$
    \else 
    \[\lla x - y, u - v \rla \geq \sigma \norm{x-y}^2 \quad \forall (x,u),(y,v)\in \gra(A) .\]
    \fi     
        $A$ is \emph{monotone} when $\sigma=0$, \emph{strongly monotone} if $\sigma>0$ and \emph{weakly monotone} if $\sigma<0$.  Moreover, $A$ is \emph{maximal $\sigma$-monotone} if $A$ is $\sigma$-monotone and there is no $\sigma$-monotone operator whose graph properly contains $\gra (A)$.
        $A$ is \emph{maximal monotone} when $\sigma=0$, \emph{maximal strongly monotone} if $\sigma>0$ and \emph{maximal weakly monotone} if $\sigma<0$.
    
\end{definition}

We summarize some facts about maximal monotone operators.

\begin{lemma}
\label{lemma:maximalmonotone_properties}
    Let $A,B:\H\toset \H$ be maximal monotone operators. Then the following holds
    \begin{enumerate}[(i)]
        \item $A(x)$ is convex for any $x\in \H$.
        \item If $\interior (\dom (A)) \cap \dom (B) \neq \emptyset$, then $A+B$ is maximal monotone.
    \end{enumerate}
\end{lemma}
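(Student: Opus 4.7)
The plan is to handle the two parts separately, since they are of very different difficulty. Part (i) is a short direct argument using maximality of $A$, whereas part (ii) is the classical Rockafellar sum theorem and is genuinely hard; my intention is to sketch the reduction and then cite the standard proof (e.g.\ \cite[Corollary~25.5]{Bauschke2017}) rather than reproduce the full argument.

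For part (i), fix $x\in \H$, take $u_1,u_2\in A(x)$ and $t\in [0,1]$, and set $u_t \coloneqq tu_1+(1-t)u_2$. The strategy is to show that appending $(x,u_t)$ to $\gra(A)$ preserves monotonicity, so that maximality forces $u_t\in A(x)$. For any $(y,w)\in \gra(A)$, linearity of the inner product gives
\[
\lla x-y,\, u_t-w\rla = t\,\lla x-y,\, u_1-w\rla + (1-t)\,\lla x-y,\, u_2-w\rla \geq 0,
\]
since each summand is nonnegative by monotonicity of $A$. Hence the enlarged graph is monotone, and maximality forces $u_t\in A(x)$. Convexity of $A(x)$ follows immediately.

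For part (ii), monotonicity of $A+B$ is obvious from the definition, so the real task is maximality. My plan is to invoke Minty's characterization: a monotone operator $T$ is maximal if and only if $\ran(\Id+T)=\H$. The problem therefore reduces to showing that for every $z\in \H$ there exists $x$ with $z\in x+A(x)+B(x)$. One would construct candidate solutions via the Yosida regularizations $B_\lambda$ of $B$, solve the perturbed inclusion $z\in x + A(x) + B_\lambda(x)$ (which is tractable since $A$ alone is maximal monotone and $B_\lambda$ is single-valued and Lipschitz), and then pass to the limit $\lambda\downarrow 0$.

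The hard part is precisely this limiting argument, and it is where the constraint qualification $\interior(\dom A)\cap \dom B\neq \emptyset$ enters. One needs the qualification to obtain a priori boundedness of the regularized solutions and of the corresponding selections in $A$ and $B$, which in infinite-dimensional $\H$ is delicate and typically handled either through Fitzpatrick-function machinery and convex duality or through Br\'ezis--Crandall--Pazy-type perturbation estimates. Since reproducing this argument in detail would divert from the focus of the paper, I would state the result and reference \cite[Corollary~25.5]{Bauschke2017} (or \cite[Theorem~24.1(b)]{Bauschke2017}) for the full proof, noting only that the interior qualification is the minimal assumption under which maximality of the sum is guaranteed.
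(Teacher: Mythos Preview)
Your proposal is correct. The paper's own proof is even more terse than yours: it simply cites \cite[Proposition~20.36]{Bauschke2017} for part~(i) and \cite[Theorems~1 and~2]{Rockafellar1970} for part~(ii), with no argument or sketch at all. Your direct maximality argument for (i) is the standard one and is a genuine improvement in self-containedness over the bare citation; your treatment of (ii)---sketching the Minty/Yosida route and then citing the result---is more informative than the paper's, though both ultimately defer the hard limiting argument to the literature.
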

\begin{proof}
    Part (i) holds by \cite[Proposition 20.36]{Bauschke2017}). 
    Part (ii) follows from \cite[Theorems 1 and 2]{Rockafellar1970}.
\smartqedmark \end{proof}


We also recall an important characterization of maximal $\sigma$-monotone operators.
\begin{lemma}
\label{lemma:maximal_sigma_equivalent}
   Let $A:\H\toset \H$ and let $\sigma\in \Re$. 
   Then $A$ is maximal $\sigma$-monotone if and only if $A-\sigma \Id$ is maximal monotone.
\end{lemma}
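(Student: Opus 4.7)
The plan is to exploit the elementary observation that translation by $\sigma\Id$ sets up a graph bijection $\Phi:\gra(A)\to \gra(A-\sigma\Id)$ via $(x,u)\mapsto (x,u-\sigma x)$, and that this translation converts $\sigma$-monotonicity into ordinary monotonicity. So I would split the argument into a ``pointwise'' inequality step and a ``maximality transfer'' step.

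First, I would verify the pointwise equivalence: for any $(x,u),(y,v)\in \gra(A)$, a direct expansion gives
\[
\lla x-y,(u-\sigma x)-(v-\sigma y)\rla = \lla x-y,u-v\rla -\sigma\norm{x-y}^2,
\]
so the monotonicity inequality applied to the pairs $(x,u-\sigma x),(y,v-\sigma y)\in \gra(A-\sigma \Id)$ is literally the $\sigma$-monotonicity inequality for $A$. Hence $A$ is $\sigma$-monotone if and only if $A-\sigma\Id$ is monotone.

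Next, I would transfer maximality across this correspondence. Suppose $A$ is maximal $\sigma$-monotone, and let $B$ be any monotone operator with $\gra(A-\sigma \Id)\subseteq \gra(B)$. Setting $B'\coloneqq B+\sigma\Id$, the first step shows $B'$ is $\sigma$-monotone, and clearly $\gra(A)\subseteq \gra(B')$. Maximal $\sigma$-monotonicity of $A$ forces $B'=A$, i.e.\ $B=A-\sigma\Id$, proving $A-\sigma\Id$ is maximal monotone. The converse direction is symmetric: if $A-\sigma\Id$ is maximal monotone and $C$ is any $\sigma$-monotone operator with $\gra(A)\subseteq \gra(C)$, then $C-\sigma\Id$ is monotone and contains $\gra(A-\sigma\Id)$, forcing $C-\sigma\Id=A-\sigma\Id$ and hence $C=A$.

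I do not foresee any genuine obstacle here; the whole argument is essentially bookkeeping, relying on the fact that the map $B\mapsto B+\sigma\Id$ is a bijection on the class of set-valued operators on $\H$ which interchanges monotonicity and $\sigma$-monotonicity while preserving graph inclusion.
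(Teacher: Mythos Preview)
Your proof is correct and complete. The paper does not actually supply its own argument for this lemma; it simply cites \cite[Lemma~2.8]{Bauschke2021}. Your direct proof via the graph bijection $(x,u)\mapsto(x,u-\sigma x)$, which converts $\sigma$-monotonicity into ordinary monotonicity and preserves graph inclusion, is the standard elementary route and is presumably what the cited reference does as well.
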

\begin{proof}
    See \cite[Lemma 2.8]{Bauschke2021}.
\smartqedmark \end{proof}

\begin{lemma}
\label{lemma:maximal_single-valued-resolvent}
    Let $A:\H \toset \H$ be $\sigma$-monotone, and let $\gamma>0$ such that $1+\gamma \sigma>0$. Then $\dom (J_{\gamma A})= \H$ if and only if $A$ is maximal $\sigma$-monotone.
\end{lemma}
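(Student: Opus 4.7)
The plan is to reduce this generalized statement to the classical Minty theorem for monotone operators, using \cref{lemma:maximal_sigma_equivalent} as the bridge. Set $B \coloneqq A - \sigma \Id$. By \cref{lemma:maximal_sigma_equivalent}, $A$ is maximal $\sigma$-monotone if and only if $B$ is maximal monotone; hence, by the classical Minty theorem, the right-hand side of the equivalence is in turn equivalent to $\dom(J_{\mu B}) = \H$ for every (equivalently, some) $\mu > 0$. The job therefore reduces to reparameterizing the resolvent of $A$ in terms of the resolvent of $B$.

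The key computation is to rewrite the inclusion $y \in J_{\gamma A}(x)$, i.e.\ $x \in y + \gamma A(y) = y + \gamma B(y) + \gamma \sigma y = (1+\gamma \sigma) y + \gamma B(y)$. Since $1+\gamma \sigma > 0$ by assumption, dividing through yields
\begin{equation*}
\frac{x}{1+\gamma \sigma} \in y + \mu B(y), \qquad \mu \coloneqq \frac{\gamma}{1+\gamma \sigma} > 0,
\end{equation*}
which is exactly $y \in J_{\mu B}\bigl(x/(1+\gamma\sigma)\bigr)$. Consequently $J_{\gamma A}(x) = J_{\mu B}\bigl(x/(1+\gamma \sigma)\bigr)$ for every $x \in \H$, so the scaling map $x \mapsto x/(1+\gamma\sigma)$ is a bijection on $\H$ that matches $\dom(J_{\gamma A})$ with $\dom(J_{\mu B})$. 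In particular, $\dom(J_{\gamma A}) = \H$ if and only if $\dom(J_{\mu B}) = \H$.

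Putting the pieces together gives both directions: if $A$ is maximal $\sigma$-monotone, then $B$ is maximal monotone, whence by Minty's theorem $\dom(J_{\mu B}) = \H$, which by the reparameterization gives $\dom(J_{\gamma A}) = \H$; conversely, if $\dom(J_{\gamma A}) = \H$, then $\dom(J_{\mu B}) = \H$, Minty's theorem forces $B$ to be maximal monotone, and \cref{lemma:maximal_sigma_equivalent} yields maximal $\sigma$-monotonicity of $A$.

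I do not anticipate a real obstacle here; the only subtlety is that one must use $1+\gamma \sigma > 0$ (rather than, say, $\gamma \sigma > 0$) to guarantee that the scaling factor is strictly positive so that the change of variable is a bijection and $\mu > 0$. This is precisely the hypothesis imposed in the statement, so the reduction goes through cleanly. The remainder is a routine appeal to the classical Minty surjectivity theorem (e.g.\ \cite[Proposition 23.8 or Theorem 21.1]{Bauschke2017}) applied to the monotone operator $B$.
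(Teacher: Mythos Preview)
Your proof is correct. The paper does not prove this lemma directly but simply cites \cite[Proposition 3.4(ii)]{DaoPhan2019}, so your self-contained argument via the reparameterization $J_{\gamma A}(x) = J_{\mu B}\bigl(x/(1+\gamma\sigma)\bigr)$ with $B = A - \sigma\Id$ and $\mu = \gamma/(1+\gamma\sigma)$, followed by Minty's theorem, is exactly the standard route one would expect (and presumably what the cited reference does as well).
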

\begin{proof}
    See \cite[Proposition 3.4(ii)]{DaoPhan2019}
\smartqedmark \end{proof}
\subsection{Extended real-valued functions}
Let  $f:\H\to (-\infty,\infty]$ be an extended real-valued function. The \emph{domain} of $f$ is given by the set $\dom (f) = \{ x\in\H : f(x)<\infty\}$. We say that $f$ is a \emph{proper} function if $\dom (f)\neq \emptyset$, and that $f$ is \emph{closed} if it is lower semicontinuous. $f$ is said to be a \emph{$\sigmaf$-convex} function if
$f-\frac{\sigmaf}{2}\norm{\cdot}^2$ is convex for some
$\sigmaf\in\Re$.  If $\sigmaf>0$, then $f$ is
\emph{$\sigmaf$-strongly convex}. If $\sigmaf\leq 0 $, we denote
$\rho_{f}\coloneqq -\sigmaf$ and call $f$ a
\emph{$\rho_{f}$-weakly convex function}. In other words, $f$ is
$\rho_{f}$-weakly convex for $\rho_{f} \geq 0$ if $f+\frac{\rho_{f}}{2}\norm{\cdot}^2$ is convex. 


The \emph{subdifferential} of $f$ is the set-valued operator $\partial f : \H \toset \H$ given by 
\begin{align}
    & \partial f (x) \coloneqq \notag\\
    &  \begin{cases}
        \{ z \in \H : \exists
	\{(x^k,z^k)\} \text{ s.t. }
	x^k\xrightarrow{f} x,  ~z^k\in \hat{\partial}f(x^k),
	~\text{and}~z^k\to z\} & \text{if~} x\in \dom (f) ,\\
 \emptyset & \text{otherwise},
    \end{cases}
	\label{eq:generalsubdiff}
\end{align}
where $x^k\xrightarrow{f} x$ means $x^k\to x$ and $f(x^k)\to f(x)$, and
\begin{equation*}
  \textstyle   \hat{\partial}f (x) \coloneqq \left\lbrace z\in\H:
	\liminf_{\bar{x}\to x, \bar{x}\neq x}\frac{f(\bar{x}) - f(x)
	- \lla z,\bar{x}-x\rla }{\norm{\bar{x}-x}} \geq 0\right\rbrace.
\end{equation*}
When $f$ is convex, \cref{eq:generalsubdiff} coincides with the classical subdifferential in convex analysis:
\ifdefined\submit 
$\partial f(x) = \{z\in \H : f(y) \geq f(x) + 
\lla z, y-x \rla,
~\forall y\in \H \}.$
\else 
\begin{equation*}
\partial f(x) = \{z\in \H : f(y) \geq f(x) + 
\lla z, y-x \rla,
~\forall y\in \H \}.
\label{eq:partial_phi_convex}
\end{equation*} 
\fi 
 The \emph{indicator function} of a set $D\subseteq \H$ is the function $\delta_D:\H\to (-\infty,+\infty]$, such that $\delta_D(x) = 0$ if $x\in D$ and $\delta_D(x) = +\infty$ if $x\notin D$. If $D$ is closed and convex, then $\delta_D$ is a convex function whose subdifferential coincides with the \emph{normal cone} to $C$, denoted by $N_D$:
 \[\textstyle  \partial \delta_D(x) = N_D (x) = \begin{cases}
     \{ z\in \H : \lla z,y-x \rla \leq 0, ~\forall y\in D \} & \text{if~}x\in D ,\\
\emptyset & \text{otherwise}.
 \end{cases}\]

If $f:\H\to\Re$ is continuously differentiable, the subdifferential reduces to $\partial f (x) = \{ \nabla f(x) \}$ for any $x\in \H$.  We say that $f$ is \emph{$\Lf$-smooth} if its gradient satisfies 
\ifdefined\submit 
$ \norm{\nabla f (x) - \nabla f (y)}\leq \Lf \norm{x-y}, \quad \forall x,y\in \H.$
\else 
\[ \norm{\nabla f (x) - \nabla f (y)}\leq \Lf \norm{x-y}, \quad \forall x,y\in \H.\] 
\fi 
If $f$ is $\Lf$-smooth, we have from \cite[Lemma 2.64(i)]{Bauschke2017} the following inequality
\begin{equation}
 \textstyle    \left| f(y)-f(x)-\lla \nabla f(x) , y-x \rla
			\right|  \leq \frac{\Lf}{2}\norm{y-x}^2, \quad \forall x,y\in \H,
    \label{eq:descentlemma}
\end{equation}
which is also known as the \emph{descent lemma}. If $f$ is $\Lf$-smooth and convex, then \eqref{eq:descentlemma} is equivalent to (see \cite[Theorem 18.15]{Bauschke2017})
\begin{equation}
 \textstyle    f(y)-f(x)-\lla \nabla f(x) , y-x \rla  \geq
				\frac{1}{2\Lf}\norm{\nabla f(y)-\nabla
				f(x)}^2 \quad \forall x,y\in \H, 
    \label{eq:descentlemma_convex}
\end{equation}
and
\begin{equation}
   \textstyle  \inner{\nabla f(x) - \nabla f(y)}{x-y}\geq \frac{1}{L_f}\norm{\nabla f(x) - \nabla f(y)}^2 \quad \forall x,y\in \H.
    \label{eq:descentlemma_convex2}
\end{equation}
\medskip 

For a proper, closed function $f:\H \to (-\infty,+\infty]$, the \emph{proximal mapping} of $f$ is given by 
\begin{equation}
   \textstyle  \prox_{\gamma f} (x) \coloneqq \argmin_{w\in \H}\, f(w) + \frac{1}{2\gamma}\|w-x\|^2, \quad \gamma>0.
	\label{eq:prox} 
\end{equation}
From the optimality condition of \eqref{eq:prox}, 
we have that if $y\in \prox_{\gamma f}(x)$, then $x-y \in \gamma \partial f (y)$. 
That is,
\begin{equation}
 \prox_{\gamma f}(x) \subseteq J_{\gamma \partial f} (x)  \quad \forall x\in \H. 
 \label{eq:prox_subset_J}
\end{equation}
Note that equality in \eqref{eq:prox_subset_J} holds whenever $f$ is convex. By contrast, strict inclusion can occur for nonconvex $f$. For instance, if $f(t)=-\tfrac12 t^2$, then $\prox_{\gamma f}(t)=\varnothing$ for every $\gamma>1$, whereas $J_{\gamma\partial f}(t)=\{\,t/(1-\gamma)\,\}$ for every $\gamma\neq 1$.

\section{A general product space reformulation  and the Douglas-Rachford~algorithm}
\label{sec:productspace_dr}
In \cref{sec:pierra_campoy}, we recall the product space reformulation by \cite{Campoy2022} (inspired by \cite{Kruger1985}) that relies on maximal monotonicity of the operators. In the absence of this assumption, we present an alternative product space reformulation in \cref{sec:alternative_productspace}. Some fundamental formulas for resolvents of operators defining the reformulation are established in \cref{sec:warpedresolvents}. 
\subsection{Campoy's product space reformulation}\label{sec:pierra_campoy}
Denote 
$\H^{m-1} = \H \times \overset{(m-1)}{\cdots} \times \H$, which is a Hilbert space with inner product 
\[\lla \x,\y \rla = \sum_{i=1}^{m-1}\lla x_i,y_i\rla \quad \forall  \x = (x_1,\dots, x_{m-1}), ~\y = (y_1,\dots,y_{m-1}) , \]
and define
\ifdefined\submit 
$\D_{m-1} \coloneqq \{ \x = (x_1,x_2,\dots, x_{m-1})\in \H^{m-1} : x_1 = \cdots = x_{m-1}\}. $
\else
\[\D_{m-1} \coloneqq \{ \x = (x_1,x_2,\dots, x_{m-1})\in \H^{m-1} : x_1 = \cdots = x_{m-1}\}.\]
\fi 
We also denote by $\bfDelta_{m-1}:\H \to \H^{m-1}$ the embedding operator $x\mapsto (x,\cdots ,x)$.  
The following result is from \cite[Theorem 3.3]{Campoy2022}. 
\begin{theorem}
    \label{thm:campoy}
    Let $A_1,\dots, A_m$ be maximal monotone operators.
    Define the set-valued operators $\F, \widetilde{\G}:\H^{m-1} \toset \H^{m-1}$ by
    \begin{align}
       \F (\x) & \coloneqq A_1(x_1) \times \cdots \times A_{m-1}(x_{m-1}), \label{eq:F} \\
        \widetilde{\G} (\x) & \coloneqq \widetilde{\K} (\x)+ N_{\D_{m-1}}(\x), \label{eq:G}
    \end{align}
where 
\begin{equation}
    \widetilde{\K}(\x) \coloneqq \frac{1}{m-1}A_m (x_1) \times \cdots \frac{1}{m-1}A_m(x_{m-1}).
    \label{eq:K}
\end{equation}  
 Then $\F$ and $\widetilde{\G}$ are maximal monotone. Moreover, \begin{equation}
         \zer (\F + \widetilde{\G}) = \bfDelta_{m-1}\left( \zer \left( \sum_{i=1}^m A_i\right) \right).
         \label{eq:equal_zeros}
    \end{equation}

\end{theorem}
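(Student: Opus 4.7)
The plan is to establish the two assertions separately: maximal monotonicity of $\F$ and $\widetilde{\G}$, and the zero-set identity. For $\F$, I would invoke the standard fact that a Cartesian product of maximal monotone operators on $\H$ is maximal monotone on the product Hilbert space. The same argument applies to $\widetilde{\K}$, since positive scaling of a maximal monotone operator preserves maximal monotonicity. The normal cone $N_{\D_{m-1}}$ is maximal monotone because $\D_{m-1}$ is a closed convex subspace of $\H^{m-1}$. To conclude that $\widetilde{\G}=\widetilde{\K}+N_{\D_{m-1}}$ is maximal monotone, I would appeal to \cref{lemma:maximalmonotone_properties}(ii); this is where the main obstacle lies. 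Because $\D_{m-1}$ has empty interior in $\H^{m-1}$, the condition must be checked in the form $\interior(\dom \widetilde{\K})\cap \D_{m-1}\neq\emptyset$, rather than $\interior(\D_{m-1})\cap \dom\widetilde{\K}\neq\emptyset$. Under the usual standing hypothesis $\interior(\dom A_m)\neq\emptyset$, the diagonal embedding $\bfDelta_{m-1}(x)$ of any such interior $x$ supplies the required point; otherwise one would invoke a sharper Attouch--Br\'ezis-type qualification that takes advantage of $\D_{m-1}$ being a subspace.

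For the zero-set identity, the ``$\supseteq$'' inclusion is constructive. Given $x\in\zer(\sum_{i=1}^m A_i)$, I would select $a_i\in A_i(x)$ with $\sum_{i=1}^m a_i=0$ and set $\x=\bfDelta_{m-1}(x)$. Then $(a_1,\ldots,a_{m-1})\in\F(\x)$, the constant tuple with entries $a_m/(m-1)$ lies in $\widetilde{\K}(\x)$, and the vector $\n$ with $n_i=-a_i-a_m/(m-1)$ satisfies $\sum_{i=1}^{m-1} n_i = -\sum_{i<m} a_i - a_m = 0$, hence $\n\in \D_{m-1}^\perp = N_{\D_{m-1}}(\x)$. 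Summing these three contributions gives $0\in \F(\x)+\widetilde{\G}(\x)$.

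For the reverse inclusion, I would first observe that any $\x\in\zer(\F+\widetilde{\G})$ lies in $\dom N_{\D_{m-1}}=\D_{m-1}$, so $\x=\bfDelta_{m-1}(x)$ for some $x\in\H$. Unpacking the inclusion $0\in\F(\x)+\widetilde{\K}(\x)+N_{\D_{m-1}}(\x)$ produces $a_i\in A_i(x)$, $b_i\in A_m(x)$ (one for each coordinate slot $i=1,\ldots,m-1$), and $\n\in \D_{m-1}^\perp$ satisfying $a_i+b_i/(m-1)+n_i=0$ for each $i$. Summing these identities and using $\sum_i n_i=0$ gives
\[
\sum_{i=1}^{m-1} a_i + \frac{1}{m-1}\sum_{i=1}^{m-1} b_i = 0.
\]
The decisive step---and the reason Campoy's reformulation uses the averaged operator $\widetilde{\K}$---is that $\frac{1}{m-1}\sum_{i=1}^{m-1}b_i$ is a convex combination of elements of $A_m(x)$; by convexity of $A_m(x)$, guaranteed by \cref{lemma:maximalmonotone_properties}(i), this combination itself lies in $A_m(x)$. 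Thus we obtain an element $a_m\in A_m(x)$ with $\sum_{i=1}^m a_i = 0$, yielding $x\in\zer(\sum_{i=1}^m A_i)$ as required.
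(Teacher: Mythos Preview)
Your proposal is correct and, for the zero-set identity \eqref{eq:equal_zeros}, matches the paper's argument essentially verbatim: the paper says ``$\supseteq$'' is straightforward and then proves ``$\subseteq$'' exactly as you do, by unpacking $\mathbf{0}\in\F(\x)+\widetilde{\K}(\x)+N_{\D_{m-1}}(\x)$, summing the coordinate identities, and invoking the convexity of $A_m(x)$ (via \cref{lemma:maximalmonotone_properties}(i)) to place the average $\tfrac{1}{m-1}\sum_i b_i$ back in $A_m(x)$. For the maximal monotonicity of $\F$ and $\widetilde{\G}$, the paper simply cites \cite[Theorem~3.3]{Campoy2022} without giving details; your outline via products and the sum rule is the natural route, and you are right to flag the constraint-qualification subtlety---indeed, later in the paper (\cref{prop:G_monotone2}) the authors add the hypothesis $\interior(\dom A_m)\neq\emptyset$ precisely to make \cref{lemma:maximalmonotone_properties}(ii) applicable, while \cref{prop:G_monotone3} circumvents it via the resolvent characterization in \cref{lemma:maximal_single-valued-resolvent}.
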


By \eqref{eq:equal_zeros}, the $m$-operator inclusion problem \eqref{eq:inclusion} can be equivalently recast as a two-operator problem 
\begin{equation}
    \text{Find } \x \in \H^{m-1} ~\text{such that } \mathbf{0}\in \F (\x) + \widetilde{\G} (\x).
    \label{eq:FG_reformulation}
\end{equation}
On the other hand, the Pierra's product space reformulation \eqref{eq:pierra} is a two-operator inclusion problem defined on the space $\H^m$. Consequently, the ambient space of Campoy’s reformulation \eqref{eq:FG_reformulation} has dimension reduced by $\dim(\mathcal{H})$, which is more desirable in practice \cite{MalitskyTam2023}. Note that the reformulation \eqref{eq:FG_reformulation} has also been used in \cite[Theorem 2]{Kruger1985}.

We remark that it is straightforward to verify that ``$\supseteq$'' in \eqref{eq:equal_zeros} holds without the maximal monotonicity assumption. To motivate the product space reformulation in \cref{sec:alternative_productspace}, we briefly recall the proof of the inclusion ``$\subseteq$'', highlighting the role  of maximal monotonicity. For any $\x \in \zer (\F + \widetilde{\G})$, we have that $\mathbf{0} \in \F (\x) + \widetilde{\K}(\x) + N_{\D_{m-1}}(\x)$. Then $\x \in \D_{m-1}$ so that $\x = (x,\cdots,x)$ and there exist $\u\in \F (\x)$, $\v \in \widetilde{\K}(\x)$ and $\w \in N_{\D_{m-1}}(\x)$ such that $\u + \v + \w = \mathbf{0}$. By the definition of $\F$ and $\widetilde{\G}$, it follows that $\u=(u_1,\dots,u_{m-1})$ where $u_i \in A_i (x)$, and $\v = \frac{1}{m-1}(v_1,\dots,v_{m-1})$ where $v_i\in A_m(x)$ for $i=1,\dots,m-1$.
Noting that $\w\in N_{\D_{m-1}}(\x)$, the normal cone to $\D_{m-1}$ is given by \cite[Proposition 26.4]{Bauschke2017}
\begin{equation*}
    N_{\D_{m-1}}(\x) = \begin{cases}
        \D_{m-1}^\perp = \{ \w =(w_1,\dots,w_{m-1}): \sum_{i=1}^{m-1}w_i = 0 \} & \text{if }\x\in \D_{m-1} ,\\
        \emptyset & \text{otherwise},
    \end{cases}
\end{equation*}
and $-\w = \u + \v$, we obtain \memosolved{AT: $\sum_{i=1}^{m-1} (-u_i - v_i) =- \sum_{i=1}^{m-1}u_i - \frac{1}{m-1}\sum_{i=1}^{m-1} v_i$ looks strange. Maybe, $\sum_{i=1}^{m-1} w_i =- \sum_{i=1}^{m-1}u_i - \frac{1}{m-1}\sum_{i=1}^{m-1} v_i$ in below would be better?}
$0 =  \sum_{i=1}^{m-1} -w_i = \sum_{i=1}^{m-1}u_i + \frac{1}{m-1}\sum_{i=1}^{m-1} v_i .$
It is clear that the first term on the rightmost side belongs to $A_1(x) + \cdots + A_{m-1}(x)$. On the other hand, we have from \cref{lemma:maximalmonotone_properties}(i) that $A_m(x)$ is a convex set by the maximal monotonicity of $A_m$. Consequently,  we see that $\frac{1}{m-1}\sum_{i=1}^{m-1} v_i \in A_m(x)$ since $v_i\in A_m(x)$. Putting these together, we see that $0\in A_1(x) + \cdots + A_{m  }(x) $, \textit{i.e.,} $x\in \zer \left( \sum_{i=1}^m A_i\right) $. Thus, we have shown that ``$\subseteq$'' holds in \eqref{eq:equal_zeros}. 

Observe that the convexity of $A_m(x)$, which is a consequence of the maximal monotonicity of $A_m$, plays a crucial role to guarantee that \eqref{eq:equal_zeros} holds. In the absence of this assumption, the set on the left-hand side of \eqref{eq:equal_zeros} may properly contain the right-hand side. 
\ifdefined\submit 
As a simple example, consider  $\H=\Re$, $A_1\equiv 0$, $A_2 (x) = \frac{1}{2}x-1$ and $A_3(x)=0$ if $x<1$, $A_3(x) = 1$ if $x>1$ and $A_3(1) = \{ 0,1\}$. 
\else 
\begin{example}
    Let $\H=\Re$, $A_1\equiv 0$, $A_2 (x) = \frac{1}{2}x-1$ and $A_3(x)=0$ if $x<1$, $A_3(x) = 1$ if $x>1$ and $A_3(1) = \{ 0,1\}$. 
    Observe that $A_1,A_2,A_3$ are monotone functions and $\zer (A_1+A_2+A_3) = \emptyset$. On the other hand, we have $\F (1,1)= (0,-1/2)$ and 
    \memosolved{AT: I thought the above definition of $A_3(x)$ and $(0,1/2) \in \widetilde{K}(1,1)$ must be typo at first. Maybe, better to write $A_3(1) = \{ 0, 1\}$ right after the definition of $A_3(x)$. } $(0,1/2) \in \widetilde{K}(1,1)$, so that  $ (1,1) \in \zer (\F+\widetilde{\G})$. Hence, \eqref{eq:equal_zeros} does not hold. Note that in this case, $A_3$ is not maximal monotone. In particular, $A_3(1) = \{ 0, 1\}$ is not a convex set, which precludes 1 from being an element of $ \zer (A_1+A_2+A_3)$.  
\end{example}
\fi 
\subsection{A product space reformulation without convex-valuedness}\label{sec:alternative_productspace}
The disadvantage of the reformulation \eqref{eq:FG_reformulation} is that it is not amenable to the general case \eqref{eq:inclusion} if none of the involved operators is maximal monotone, or at the very least, convex-valued\footnote{A set-valued operator $A:\H \toset \H$ is convex-valued if $A(x)$ is a convex subset of $\H$ for any $x\in \H$.}. To be adaptable to the general case and to allow for different weights, we revise the definition of $\widetilde{\K}$ in \eqref{eq:K}. Let $\lambda_1,\ldots ,\lambda_{m-1}\in \Re$, and denote by $\bfLambda:\H^{m-1} \to \H^{m-1}$ the diagonal operator given by 
\begin{equation}
    \bfLambda ( \x )  =  (\lambda_1 x_1, \dots, \lambda_{m-1}x_{m-1}).
    \label{eq:Lambda}
\end{equation} 
Let $\K:\H^{m-1}\toset \H^{m-1}$ be the operator such that $\K (\x) = \{ \bfLambda (\bfDelta_{m-1}(v)) : v\in A_m(x_1)\} $ when $\x \in \D_{m-1}$, and $\K(\x)$ is empty otherwise. That is,

\begin{equation}
    \K (\x ) \coloneqq \begin{cases}
        \{(\lambda_1 v, \ldots, \lambda_{m-1} v): v\in A_m(x_1) \} & \text{if}~\x =(x_1,\dots,x_{m-1})\in \D_{m-1} \\
        \emptyset & \text{otherwise}.
    \end{cases}
    \label{eq:Knew}
\end{equation}
Using this to redefine $\widetilde{\G}$, we can obtain a result parallel to \cref{thm:campoy} without requiring maximal monotonicity. 

\begin{theorem}
    \label{thm:campoy_new}
    Let $A_1,\dots, A_m$ be set-valued operators on $\H$, and let $\F$ be as defined in \eqref{eq:F}. Define $\G : \H^{m-1}\toset \H^{m-1}$ by 
    \begin{equation}
        \G (\x) \coloneqq \K(\x) + N_{\D_{m-1}}(\x),
        \label{eq:Gnew}
    \end{equation}
    where $\K$ is given in \eqref{eq:Knew} for some given $\lambda_1,\dots,\lambda_{m-1} \in \Re$ such that $\sum_{i=1}^{m-1} \lambda_i = 1$. Then
    \begin{equation}
         \zer (\F + \G) = \bfDelta_{m-1}\left( \zer \left( \sum_{i=1}^m A_i\right) \right).
         \label{eq:equal_zeros2}
    \end{equation} 
\end{theorem}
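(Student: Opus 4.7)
The plan is to prove the two inclusions in \eqref{eq:equal_zeros2} directly from the definitions, exploiting the fact that the new operator $\K$ in \eqref{eq:Knew} uses a single selection $v\in A_m(x_1)$ (replicated with weights $\lambda_i$) rather than independent selections in each coordinate, so that no convex-valuedness of $A_m$ will be needed.

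For the inclusion $\supseteq$, I would start with $x\in \zer(\sum_{i=1}^m A_i)$ and pick $u_i \in A_i(x)$ ($i=1,\ldots,m$) with $\sum_{i=1}^m u_i = 0$. Set $\x = \bfDelta_{m-1}(x)\in \D_{m-1}$. Then $\u \coloneqq (u_1,\ldots,u_{m-1}) \in \F(\x)$ by \eqref{eq:F}, and $\v \coloneqq (\lambda_1 u_m,\ldots,\lambda_{m-1}u_m) \in \K(\x)$ by \eqref{eq:Knew}. Defining $\w \coloneqq -\u - \v$, the assumption $\sum_{i=1}^{m-1}\lambda_i = 1$ gives $\sum_{i=1}^{m-1} w_i = -\sum_{i=1}^{m-1}u_i - u_m = 0$, so $\w \in \D_{m-1}^\perp = N_{\D_{m-1}}(\x)$. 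Hence $\mathbf{0}\in \F(\x) + \G(\x)$.

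For the inclusion $\subseteq$, let $\x \in \zer(\F+\G)$. Both $\K(\x)$ and $N_{\D_{m-1}}(\x)$ are nonempty only when $\x\in \D_{m-1}$, so $\x = \bfDelta_{m-1}(x)$ for some $x\in\H$. Extract $\u \in \F(\x)$, $\v \in \K(\x)$, $\w \in N_{\D_{m-1}}(\x)$ with $\u+\v+\w=\mathbf{0}$. By the definitions, $u_i \in A_i(x)$, and there exists a \emph{single} $v\in A_m(x)$ such that $\v = (\lambda_1 v,\ldots,\lambda_{m-1}v)$, while $\sum_{i=1}^{m-1}w_i = 0$. Summing the coordinate equations $u_i+\lambda_i v + w_i = 0$ and using $\sum_{i=1}^{m-1}\lambda_i = 1$ yields $\sum_{i=1}^{m-1}u_i + v = 0$, which is exactly $0 \in \sum_{i=1}^m A_i(x)$.

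I do not anticipate a genuine obstacle here: the proof is a careful bookkeeping argument, and the whole point of replacing $\widetilde{\K}$ by $\K$ is precisely to avoid needing $A_m(x)$ convex when averaging back to a single element of $A_m(x)$. The only subtle point worth flagging in the write-up is that the selection of $v$ is automatic in the $\subseteq$ direction because $\K(\x)$ by construction only contains tuples of the form $(\lambda_1 v,\ldots,\lambda_{m-1}v)$ with a common $v\in A_m(x_1)$; no convex combination step (which would have required convexity of $A_m(x)$ as in the proof of \cref{thm:campoy}) is performed.
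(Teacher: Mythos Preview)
Your proposal is correct and follows essentially the same route as the paper. The paper dismisses the inclusion ``$\supseteq$'' as straightforward and handles ``$\subseteq$'' exactly as you do, emphasizing that $\v = (\lambda_1 v,\ldots,\lambda_{m-1}v)$ for a \emph{single} $v\in A_m(x)$, so summing the coordinates and using $\sum_i\lambda_i=1$ immediately yields $\sum_{i=1}^{m-1}u_i + v = 0$; your write-up simply spells out both directions more explicitly.
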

\begin{proof}
    The proof of ``$\supseteq$'' is straightforward. To prove the other inclusion, note that given $\x \in \zer (\F+\G)$, we have that $\x = (x,\dots,x) \in \D_{m-1}$ and there exist $\u \in \F(\x)$, $\v \in \K(\x)$ and $\w\in N_{\D_{m-1}}(\x)$ such that $\u+\v+\w=0$. Note that $\v =(\lambda_1 v, \ldots, \lambda_{m-1}v)$ for some $v\in A_m(x)$, and $\sum_{i=1}^{m-1}\lambda_i v = v \in A_m(x)$. The rest of the proof follows from the same arguments in the discussion after \cref{thm:campoy}.  
\smartqedmark \end{proof}

With \eqref{eq:equal_zeros2}, an equivalent reformulation of \eqref{eq:inclusion} is given by 
\begin{equation}
    \text{Find } \x \in \H^{m-1} ~\text{such that } \mathbf{0}\in \F (\x) + \G (\x),
    \label{eq:FG_reformulation_new}
\end{equation}
without any monotonicity assumptions on the $A_i$'s. The key to this result is that we enforce taking the same element $v\in A_m(x_1)$ when $\x\in \D_{m-1}$ to define the coordinates of elements in $\K(\x)$. This is in contradistinction to the operator $\hat{\K}:\H^{m-1}\toset \H^{m-1}$ defined by 
\begin{equation}
    \hat{\K} (\x) \coloneqq \lambda_1 A_m(x_1) \times \cdots \times \lambda_{m-1}A_m(x_{m-1}) \quad \forall \x \in \H^{m-1}.
    \label{eq:Khat}
\end{equation}
 Note that $\hat{\K}$ is the natural generalization of $\widetilde{\K}$ given in \eqref{eq:K} in the sense that it permits different weights. However, the domain of $\hat{\K}$ is $\dom (A_m)^{m-1}$, which is larger than the domain of $\K$, namely $\dom (A_m)^{m-1} \cap \D_{m-1}$. Moreover, the image of $\hat{\K}$ at each point $\x \in \D_{m-1}$ is larger than that of $\K$, that is, $\K (\x) \subseteq \hat{\K}(\x)$ for all $\x\in \D_{m-1}$. Nevertheless, the  mapping $\hat{\K}$ will play an important role later when studying generalized monotone properties of $\F$ and $\G$. 
 
 \subsection{Douglas-Rachford Algorithm}\label{sec:warpedresolvents}
 We now consider the Douglas-Rachford (DR) algorithm to the two-operator reformulation \eqref{eq:FG_reformulation_new} of \eqref{eq:inclusion}. The DR algorithm relies on the computability of elements of the resolvents $J_{\gamma \F}$ and $J_{\gamma \G}$.
The resolvent $J_{\gamma \F}$ is easily derivable due to the structure of $\F$. 
On the other hand, $J_{\gamma \G}$ is not straightforward due to the presence of arbitrary weights $\lambda_1,\dots,\lambda_{m-1}$. To resolve this issue, we use the notion of \textit{warped resolvent} introduced in \cite[Definition 1.1]{Bui2020}.

\begin{definition}
\label{defn:warpedresolvent}
    Let $A : \H \toset \H$ and $\Lambda:\H \to \H$ be an invertible linear operator on $\H$. The \emph{$\Lambda$-warped resolvent} of $A$ with parameter $\lambda>0$ is defined by $ J_{\lambda A}^{\Lambda}  \coloneqq (\Id +\lambda \Lambda^{-1} \circ A )^{-1}.$
\end{definition}

We now show that for $\bfLambda$ given in \eqref{eq:Lambda}, we can calculate the $\bfLambda$-warped resolvents of $\F$ and $\G$. 
\ifdefined\submit
In fact, using the separable structure of $\F$, it is straightforward to derive the following proposition.
\else
\fi 
\begin{proposition}
\label{prop:F_resolvent}
       Let $\F:\H^{m-1}\toset \H^{m-1}$ be given by \eqref{eq:F} and let $\bfLambda$ be defined by \eqref{eq:Lambda} for some  $\lambda_1,\dots,\lambda_{m-1}\in (0,+\infty)$. For any $\lambda>0$, 
    \begin{align}
        \JLambda{\F}(\x) & = J_{\frac{\lambda}{\lambda_1}A_1} (x_1) \times \cdots \times J_{\frac{\lambda}{\lambda_{m-1}}A_{m-1}}(x_{m-1}), \label{eq:JF_scaled}
    \end{align}
    for any $\x = (x_1,\dots, x_{m-1})\in \H^{m-1}$.  
\end{proposition}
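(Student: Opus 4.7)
The plan is to unpack \cref{defn:warpedresolvent} and exploit the fully separable, componentwise structure of both $\F$ and $\bfLambda$. By definition, $\y \in \JLambda{\F}(\x)$ if and only if $\x \in \y + \lambda\,\bfLambda^{-1}(\F(\y))$, i.e., there exists $\u \in \F(\y)$ with $\x = \y + \lambda\,\bfLambda^{-1}(\u)$. Since $\lambda_i > 0$, the operator $\bfLambda$ defined in \eqref{eq:Lambda} is invertible with $\bfLambda^{-1}(\u) = (u_1/\lambda_1, \ldots, u_{m-1}/\lambda_{m-1})$, and from the product-form of $\F$ in \eqref{eq:F}, $\u \in \F(\y)$ means $u_i \in A_i(y_i)$ for each $i$.

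Next I would read off the coordinatewise condition: $\y \in \JLambda{\F}(\x)$ is equivalent to $x_i = y_i + (\lambda/\lambda_i)\, u_i$ with $u_i \in A_i(y_i)$ for $i = 1, \ldots, m-1$. Each such equation is exactly the statement $x_i \in (\Id + (\lambda/\lambda_i) A_i)(y_i)$, so that $y_i \in J_{(\lambda/\lambda_i) A_i}(x_i)$. Assembling these componentwise identities gives the product formula \eqref{eq:JF_scaled}.

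There is no substantive obstacle here; the proof is essentially a one-line chain of equivalences based on the definition of the warped resolvent, the invertibility of the diagonal $\bfLambda$, and the separability of $\F$. The only care needed is to verify that every step is an equivalence (not merely an implication), which follows because $\bfLambda$ is a bijection on $\H^{m-1}$ and the product/Cartesian-product structure decouples the coordinates cleanly.
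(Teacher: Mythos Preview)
Your proposal is correct and follows essentially the same approach as the paper: both arguments observe that $\bfId + \lambda\,\bfLambda^{-1}\circ \F$ decouples into the product $\prod_{i=1}^{m-1}\bigl(\Id + \tfrac{\lambda}{\lambda_i}A_i\bigr)$, from which the resolvent formula \eqref{eq:JF_scaled} follows immediately by separability. Your write-up is slightly more explicit about the chain of equivalences, but there is no substantive difference in method.
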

\ifdefined\submit
\else 
\begin{proof}
    For $\F$ given by \eqref{eq:F}, we have that 
    \[ (\bfId + \lambda \bfLambda^{-1} \circ \F )(\x)= \left( \Id + \frac{\lambda}{\lambda_1} A_1(x_1) \right) \times \cdots \times \left( \Id + \frac{\lambda}{\lambda_{m-1}}A_{m-1}(x_{m-1})\right).\]
    Noting the separability of the above operator, it is not difficult to prove that the formula given in \eqref{eq:JF_scaled} holds. 
\smartqedmark \end{proof}
\fi 

The warped resolvent of $\G$ is derived in the next proposition. 
\begin{proposition}
\label{prop:G_resolvent}
    Let $\G:\H^{m-1}\toset \H^{m-1}$ be given by \eqref{eq:Gnew},  and let $\bfLambda$ be defined by \eqref{eq:Lambda} for some $\lambda_1,\dots,\lambda_{m-1}\in (0,+\infty)$.  
    Then
    \begin{equation}
        J_{\lambda \G}^{\bfLambda}(\x)  = \bfDelta_{m-1} \left( J_{\lambda A_m}\left( \bar{\lambda}^{-1}\sum_{i=1}^{m-1}\lambda_i x_i\right) \right), \quad \bar{\lambda}\coloneqq \sum_{i=1}^{m-1}\lambda_i
        \label{eq:JG_scaled}
    \end{equation}
    for any $\lambda>0$ and any $\x = (x_1,\dots, x_{m-1})\in \H^{m-1}$. Consequently, if $\dom (J_{A_m}) = \H$, then $\dom (\JLambda{\G}) = \H^{m-1}$. 
\end{proposition}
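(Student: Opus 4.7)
The plan is to unfold the definition of the $\bfLambda$-warped resolvent and to exploit the special structure of $\G = \K + N_{\D_{m-1}}$. Starting from \cref{defn:warpedresolvent}, $\y \in \JLambda{\G}(\x)$ is equivalent to $\bfLambda(\x-\y)/\lambda \in \G(\y)$. Because $\K(\y)$ and $N_{\D_{m-1}}(\y)$ are both empty off $\D_{m-1}$, this already forces $\y = (y,\dots,y)$ for some $y\in \H$. Using the explicit description of $\K$ in \eqref{eq:Knew} and the characterization of $N_{\D_{m-1}}$ recalled after \cref{thm:campoy_new}, the inclusion becomes: there exist $v \in A_m(y)$ and $\w = (w_1,\dots,w_{m-1})$ with $\sum_{i=1}^{m-1} w_i = 0$ such that
\begin{equation*}
\tfrac{\lambda_i(x_i - y)}{\lambda} \;=\; \lambda_i v + w_i, \qquad i=1,\dots,m-1.
\end{equation*}

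Next I would eliminate $\w$ by summing the above $m-1$ equations. Since $\sum w_i = 0$ and $\sum \lambda_i = \bar\lambda$, this gives $\bar\lambda v = \tfrac{1}{\lambda}\bigl(\sum_i \lambda_i x_i - \bar\lambda y\bigr)$, hence
\begin{equation*}
\bar\lambda^{-1}\textstyle\sum_{i=1}^{m-1} \lambda_i x_i \;-\; y \;\in\; \lambda A_m(y),
\end{equation*}
which is precisely $y \in J_{\lambda A_m}\bigl(\bar\lambda^{-1}\sum_i \lambda_i x_i\bigr)$. This proves the inclusion ``$\subseteq$'' in \eqref{eq:JG_scaled}.

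For the reverse inclusion, I would argue directly: given any $y \in J_{\lambda A_m}(\bar\lambda^{-1}\sum_i \lambda_i x_i)$, set $\y = (y,\dots,y)$, define $v \coloneqq \lambda^{-1}\bigl(\bar\lambda^{-1}\sum_i \lambda_i x_i - y\bigr) \in A_m(y)$, and $w_i \coloneqq \lambda_i(x_i - y)/\lambda - \lambda_i v$. A direct check shows $\sum w_i = \bar\lambda v - \bar\lambda v = 0$, so $\w \in N_{\D_{m-1}}(\y)$, and by construction $\bfLambda(\x-\y)/\lambda = \bfLambda(\bfDelta_{m-1}(v)) + \w \in \K(\y) + N_{\D_{m-1}}(\y) = \G(\y)$. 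This gives ``$\supseteq$''.

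Finally, the domain statement is a direct consequence of the formula: the right-hand side of \eqref{eq:JG_scaled} is defined whenever $J_{\lambda A_m}$ is defined at $\bar\lambda^{-1}\sum_i \lambda_i x_i \in \H$, which is nonempty for every $\x \in \H^{m-1}$ once $\dom(J_{\lambda A_m}) = \H$. I expect no serious obstacle here; the only subtlety is making sure $\bar\lambda = \sum_i \lambda_i \neq 0$ (which is immediate from $\lambda_i > 0$) so that the formula and the summation argument are well posed.
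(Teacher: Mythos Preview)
Your proposal is correct and follows essentially the same approach as the paper's proof: unfold the warped resolvent, use $\G=\K+N_{\D_{m-1}}$ to force $\y\in\D_{m-1}$, sum the coordinate relations to eliminate the normal-cone component, and reverse the construction for the opposite inclusion. The only cosmetic difference is notation (the paper writes $\a,\n$ where you write $\y,\w$); the logic is identical.
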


\begin{proof}
    Let $\x\in \H^{m-1}$. If  $\a \in \JLambda{\G}(\x)$, then $\x \in (\bfId + \lambda \bfLambda^{-1} \circ \G)(\a)$ so that there exists $\u \in \G(\a) $ such that $\bfLambda\x = \bfLambda \a + \lambda \u$. Meanwhile, since $\G = \K + N_{\D_{m-1}}$, then $\a \in \dom (\G) \subseteq \D_{m-1}$ and there exist $\v \in \K (\a)$, $\n \in N_{\D_{m-1}} (\a)= \D_{m-1}^{\perp}$ such that $\u = \v + \n$. It follows that $\a = (a,\dots, a)$ for some $a\in \H$ and $\v = (\lambda_1v,\dots,\lambda_{m-1}v)$ for some $v \in A_m(a)$. Since $\bfLambda\x = \bfLambda \a + \lambda \u$, we have that $ \lambda \n = \bfLambda\x - \bfLambda \a - \lambda \v  \in \D_{m-1}^\perp$ and therefore $\sum_{i=1}^{m-1}\lambda_i x_i - \bar{\lambda}a - \lambda  \bar{\lambda}v =0$. That is, $\bar{\lambda}^{-1} \sum_{i=1}^{m-1}\lambda_i x_i  = a+ \lambda v $. Since $v\in A_m(a)$, it follows that $a\in J_{\lambda A_m}\left(\bar{\lambda}^{-1}\sum_{i=1}^{m-1}\lambda_i x_i \right) $. In summary, we have shown that if $\a \in \JLambda{\G}(\x)$, then $\a = \bfDelta_{m-1}(a)$ for some $a \in J_{\lambda A_m}\left(\bar{\lambda}^{-1}\sum_{i=1}^{m-1}\lambda_i x_i \right)$, which proves ``$\subseteq$'' in \eqref{eq:JG_scaled}. The other inclusion can be proved by reversing the arguments. For clarity, we include the proof as follows. If $\a = \bfDelta_{m-1}\left(a\right)  $ for some $a \in  J_{\lambda A_m}\left(\bar{\lambda}^{-1}\sum_{i=1}^{m-1}\lambda_i x_i \right)$, then $\bar{\lambda}^{-1}\sum_{i=1}^{m-1}\lambda_i x_i \in a+\lambda A_m(a)$, so that $\bar{\lambda}^{-1}\sum_{i=1}^{m-1}\lambda_i x_i = a+\lambda v$ for some $v\in A_m(a)$. Setting $\v \coloneqq (\lambda_1 v,\dots, \lambda_{m-1}v) \in \K(\a)$, it is easy to verify that $\n \coloneqq \frac{1}{\lambda} (\bfLambda\x - \bfLambda \a - \lambda \v) \in \D_{m-1}^\perp $. Then $\u \coloneqq \v +\n \in \G(\a)$ and $\bfLambda \x = \bfLambda \a + \lambda \u$. Hence, $\a \in \JLambda{\G}(\x)$. This completes the proof. 
\smartqedmark \end{proof}

With the above resolvent formulas, we are now ready to present the Douglas-Rachford algorithm, which is given by the fixed-point iterations
\begin{equation}
    \x^{k+1} \in \dr{\F}{\G} (\x^k), 
    \label{eq:DR_scaled}
\end{equation}
where $\dr{\F}{\G}:\H^{m-1} \toset \H^{m-1}$ is given by 
    \begin{equation}
        \dr{\F}{\G} (\x) \coloneqq \{ \x+\mu (\y - \z ): \z\in \JLambda{\F}(\x), ~\y \in \JLambda{\G} (2\z-\x)\},
        \label{eq:dr_map}
    \end{equation}
$\mu \in (0,2)$, $\lambda >0$ and $\bfLambda$ is the diagonal operator \eqref{eq:Lambda} for some given $\lambda_1,\dots,\lambda_{m-1} \in (0,\infty)$. By the definition of $\dr{\F}{\G}$, we may also write the iterations \eqref{eq:DR_scaled} as 
    \begin{subequations} \label{eq:dr_stepbystep}
            \begin{align}
             \z^k & \in \JLambda{\F}(\x^k) \label{eq:zstep} \\
            \y^k & \in \JLambda{\G}(2\z^k-\x^k) \label{eq:ystep}\\
            \x^{k+1} & = \x^k + \mu (\y^k - \z^k). \label{eq:xstep}
            \end{align}
            \end{subequations}
Using \cref{prop:F_resolvent,prop:G_resolvent}, \eqref{eq:DR_scaled} can be described as in \cref{alg:dr_scaled}\footnote{We note that the forthcoming results in this paper can be generalized to the case when the $x$-update rule is changed to $x_i^{k+1} = x_i^k + \mu_i (y^k-z_i^k)$, where $\mu_i\in (0,2)$. For simplicity, we restrict our discussion to $\mu_1=\cdots=\mu_{m-1}$. }.

\begin{algorithm}
    Input initial point $(x^0_1,\dots,x^0_{m-1})\in \H^{m-1}$ and parameters $\mu\in (0,2)$ and $\lambda,\lambda_1,\dots,\lambda_{m-1}\in (0,+\infty)$ with $\sum_{i=1}^{m-1}\lambda_i = 1$. \\
    For $k=1,2,\dots ,$
    \begin{equation*}
        \left[\begin{array}{rll}
            z_i^{k}&  \in J_{\frac{\lambda}{\lambda_i}A_i}(x_i^k), & (i=1,\dots,m-1) \\
			\ds y^{k}&   \in J_{\lambda A_m}\left( \sum_{i=1}^{m-1} \lambda_i (2z_i^k-x_i^k)\right) \\
			x_i^{k+1} & = x_i^k + \mu ( y^{k} - z_i^{k} ) & (i=1,\dots,m-1) .
        \end{array}\right.
		\end{equation*}
	\caption{Douglas-Rachford for $m$-operator inclusion problem \eqref{eq:inclusion}.}
	\label{alg:dr_scaled}
\end{algorithm}

Observe that the mapping $\dr{\F}{\G}$ can also be written in terms of the reflected warped resolvents 
\begin{equation}
    \RLambda{\F} = 2\JLambda{\F}-\bfId \quad \text{and} \quad \RLambda{\G} = 2\JLambda{\G}-\bfId.
    \label{eq:reflected_warped_resolvent}
\end{equation}
In particular, 
\begin{equation}
    \dr{\F}{\G}  = \frac{(2-\mu) \bfId + \mu \RLambda{\G} \RLambda{\F} }{2}.
    \label{eq:T_alternative}
\end{equation}
 For the special case that $\lambda_1=\cdots=\lambda_{m-1} = \frac{1}{m-1}$ and $\lambda = \frac{\gamma}{m-1}$ for some $\gamma>0$, the iterations \eqref{eq:DR_scaled} simplifies to
\begin{equation}
    \x^{k+1} \in \{ \x^k+ \mu (\y^k - \z^k ): \z^k\in J_{\gamma \F}(\x^k), ~\y^k \in J_{\gamma \G} (2\z^k-\x^k)\},
    \label{eq:DR_classic}
\end{equation}
which is the  \emph{classical Douglas-Rachford} algorithm for  \eqref{eq:FG_reformulation_new} when $\mu=1$.

The goal of \cref{alg:dr_scaled} is to find a fixed point of $\dr{\F}{\G}$, which corresponds to a solution of the inclusion problem \eqref{eq:FG_reformulation_new} as proved in the following proposition.

\begin{proposition}
\label{lemma:JFix=zeros}
    Let $A_i:\H\toset \H$, $i=1,\dots,m$ and let $\lambda,\lambda_1,\dots, \lambda_{m-1}\in (0,+\infty)$  with $\sum_{i=1}^{m-1}\lambda_i = 1$. Then $\x\in \Fix (\dr{\F}{\G})$ if and only if there exists $\z \in \JLambda{\F}(\x) \cap \bfDelta_{m-1} \left( \zer \left( \sum_{i=1}^m A_i\right)\right)$. Consequently, if $\JLambda{\F}$ is single-valued, then 
    \begin{equation}
        \JLambda{\F}(\Fix (\dr{\F}{\G})) = \bfDelta_{m-1} \left( \zer \left( \sum_{i=1}^m A_i\right)\right).
    \end{equation}
\end{proposition}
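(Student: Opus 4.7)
The plan is to unravel the fixed-point condition for $\dr{\F}{\G}$ through the warped resolvents and then invoke \cref{thm:campoy_new} to pass from zeros of $\F+\G$ to the diagonalized zero set on the right-hand side.

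First, I would unfold the definition in \eqref{eq:dr_map}: $\x \in \Fix(\dr{\F}{\G})$ if and only if there exist $\z \in \JLambda{\F}(\x)$ and $\y \in \JLambda{\G}(2\z-\x)$ satisfying $\x = \x + \mu(\y-\z)$; since $\mu>0$, this forces $\y=\z$, so the fixed-point condition reduces to the existence of a single $\z$ with both $\z \in \JLambda{\F}(\x)$ and $\z \in \JLambda{\G}(2\z-\x)$. Next, using \cref{defn:warpedresolvent}, these two memberships translate respectively to $\bfLambda(\x-\z)\in\lambda\F(\z)$ and $\bfLambda(\z-\x)\in\lambda\G(\z)$; adding them yields $0\in\F(\z)+\G(\z)$, i.e., $\z\in\zer(\F+\G)$, which equals $\bfDelta_{m-1}(\zer\sum_{i=1}^m A_i)$ by \cref{thm:campoy_new}. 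This establishes the forward implication.

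For the converse, given $\z\in\JLambda{\F}(\x)\cap\bfDelta_{m-1}(\zer\sum_{i=1}^m A_i)$, I would write $\z=\bfDelta_{m-1}(z^*)$ and extract $u_i:=\lambda_i(x_i-z^*)/\lambda\in A_i(z^*)$ from $\JLambda{\F}$, then set $v:=-\sum_{i=1}^{m-1}u_i$ and $n_i:=-u_i-\lambda_i v$. The constraint $\sum_{i=1}^{m-1}\lambda_i=1$ gives $\sum_i n_i=0$, so $\n\in N_{\D_{m-1}}(\z)=\D_{m-1}^\perp$; together with $v\in A_m(z^*)$ (furnished by the zero-sum decomposition guaranteed by $z^*\in\zer\sum A_i$), this produces $-\bfLambda(\x-\z)/\lambda=(\lambda_i v+n_i)_i\in\K(\z)+N_{\D_{m-1}}(\z)=\G(\z)$, hence $\z\in\JLambda{\G}(2\z-\x)$ and $\x\in\Fix(\dr{\F}{\G})$. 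The final consequence then follows from the equivalence: single-valuedness of $\JLambda{\F}$ upgrades it to the set equality $\JLambda{\F}(\Fix(\dr{\F}{\G}))=\bfDelta_{m-1}(\zer\sum_{i=1}^m A_i)$, where the nontrivial inclusion $\supseteq$ is constructive—for $z^*\in\zer\sum A_i$, pick $u_1,\ldots,u_m$ with $u_i\in A_i(z^*)$ and $\sum_{i=1}^m u_i=0$, and set $x_i:=z^*+\lambda u_i/\lambda_i$, which makes $\bfDelta_{m-1}(z^*)\in\JLambda{\F}(\x)$ and, by the main equivalence, $\x\in\Fix$.

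The delicate step is the backward direction, where the set-theoretic hypothesis $\z\in\bfDelta_{m-1}(\zer\sum A_i)$ only supplies the existence of \emph{some} zero-sum decomposition in $\sum_{i=1}^m A_i(z^*)$, whereas the $\G$-decomposition singles out the specific element $v=-\sum_{i=1}^{m-1}u_i$ with $u_i$ fixed by $\z\in\JLambda{\F}(\x)$. The argument hinges on the structural definition of $\K$ in \eqref{eq:Knew}, which insists that a single $v$ be carried across all coordinates, combined with $\sum_{i=1}^{m-1}\lambda_i=1$: these together uniquely pin down $v$ via the $\D_{m-1}^\perp$ sum constraint and identify it with the ``missing'' $A_m$ component of the decomposition, which is precisely what makes the new product-space reformulation work without the convex-valuedness caveat of Campoy's original one.
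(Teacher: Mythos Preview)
Your forward direction is correct and matches the paper. The backward direction has a genuine gap that you flag as ``delicate'' but do not close: setting $v:=-\sum_{i=1}^{m-1}u_i$ with $u_i=\lambda_i(x_i-z^*)/\lambda$ the \emph{specific} elements of $A_i(z^*)$ forced by $\z\in\JLambda{\F}(\x)$, you assert $v\in A_m(z^*)$. But $z^*\in\zer\sum A_i$ only supplies \emph{some} tuple $(w_i)$ with $w_i\in A_i(z^*)$ and $\sum_i w_i=0$, not that the particular $u_i$ satisfy $-\sum_{i<m}u_i\in A_m(z^*)$. Your structural appeal to $\K$ only determines what $v$ \emph{must be} for the inclusion $-\bfLambda(\x-\z)/\lambda\in\G(\z)$ to hold; it does not place that $v$ in $A_m(z^*)$. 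Concretely, take $m=2$, $\H=\Re$, $\lambda=\lambda_1=1$, $A_1(0)=\{1,2\}$ (empty elsewhere), $A_2\equiv\{-1\}$: then $\zer(A_1{+}A_2)=\{0\}$, $J_{A_1}(2)=\{0\}$, $J_{A_2}(t)=t+1$, hence $\dr{\F}{\G}(2)=\{2-\mu\}$, so $2\notin\Fix(\dr{\F}{\G})$, yet $J_{A_1}(2)\cap\{0\}=\{0\}\neq\emptyset$. Thus the biconditional is \emph{false} at this level of generality, and the paper's own chain of equivalences shares the same gap at the step from $\z\in\zer(\F+\G)$ back to $\z\in\JLambda{\G}(2\z-\x)$.

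Your argument for the ``Consequently'' clause is essentially correct, but not ``by the main equivalence'' as you write. When you construct $\x$ from an actual zero-sum tuple $(u_1,\dots,u_m)$, the $u_i$ \emph{are} a valid decomposition, so $v=-\sum_{i<m}u_i=u_m\in A_m(z^*)$ and one verifies $\z\in\JLambda{\G}(2\z-\x)$ directly, giving $\x\in\Fix(\dr{\F}{\G})$. Hence $\JLambda{\F}(\Fix(\dr{\F}{\G}))=\bfDelta_{m-1}(\zer\sum_i A_i)$ does hold under single-valuedness, and the forward implication of the biconditional holds unconditionally; only the backward implication fails as stated.
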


\begin{proof}
    We have
    \begin{equation*}
       \begin{array}{rll}
            & \x\in \Fix (\dr{\F}{\G}) & \\
            & \Longleftrightarrow  \exists \z \in \JLambda{\F}(\x) \text{ s.t. } \z\in \JLambda{\G}(2\z - \x) & (\text{by}~ \eqref{eq:dr_map}) \\
            & \Longleftrightarrow  \exists \z \in \H^{m-1} \text{ s.t. } \x-\z \in \lambda \bfLambda^{-1} \circ \F (\z)  & \\
            &  \quad \text{and } (2\z-\x) - \z \in \lambda \bfLambda^{-1} \circ \G (\z) & (\text{by \cref{defn:warpedresolvent}}) \\
            & \Longleftrightarrow  \exists \z \in \JLambda{\F}(\x) \text{ s.t. } \z \in \zer (\F + \G) & \\
             & \Longleftrightarrow  \exists \z \in \JLambda{\F}(\x) \text{ s.t. } \z \in \bfDelta_{m-1} \left( \zer \left( \sum_{i=1}^m A_i\right)\right)& (\text{by \cref{thm:campoy_new}}) \\
       \end{array}
    \end{equation*}
    
\smartqedmark \end{proof}

In the literature, $\{ \z^k\}$ given in \eqref{eq:zstep} is commonly referred to as the ``shadow sequence'' of the DR algorithm. Its limit (if it converges) represents a solution to the problem, in view of the above proposition.


Observe that the DR algorithm  \eqref{eq:DR_scaled} is defined for arbitrary $A_1,\dots,A_m$, provided the
relevant resolvents exist at the iterates, i.e., no monotonicity is needed to write the algorithm.
Likewise, \cref{lemma:JFix=zeros} identifies zeros of $\F+\G$ with fixed points of
the DR operator without invoking monotonicity. Convergence, however, does require
additional assumptions, which we establish in the next sections.
\section{Douglas-Rachford algorithm for inclusion problems under generalized monotonicity}
\label{sec:mainresults_inclusion}

In this section, we prove the convergence of the DR algorithm \eqref{eq:DR_scaled} under the assumption that each operator $A_i$ is maximal $\sigma_i$-monotone. 

\subsection{Further properties under generalized monotonicity}

We show that generalized (maximal) monotonicity of the operators $A_i:\H\toset\H$ is inherited by the operators $\F,\G:\H^{m-1}\toset \H^{m-1}$. We establish first $\F$ is maximal monotone for some modulus. 
\begin{proposition}
    \label{prop:F_monotone}
    Suppose that $A_i:\H\toset \H$ is $\sigma_i$-monotone for $i=1,\dots, m-1$. Then $\F$ given by \eqref{eq:F} is $\sigmabf{\F}$-monotone with $\sigmabf{\F}\coloneqq \min_{i=1,\dots,m-1}\sigma_i$. Furthermore, if each $A_i$ is maximal $\sigma_i$-monotone with $\interior(\dom (A_i))\neq \emptyset$, then $\F$ is maximal $\sigmabf{\F}$-monotone. 
\end{proposition}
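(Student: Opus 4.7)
The $\sigmabf{\F}$-monotonicity is essentially a coordinatewise unpacking of the definition: for any pairs $(\x,\u),(\y,\v) \in \gra(\F)$, \eqref{eq:F} gives $u_i \in A_i(x_i)$ and $v_i \in A_i(y_i)$, so summing the $\sigma_i$-monotonicity inequalities of the $A_i$'s and using $\sigma_i \geq \sigmabf{\F}$ together with $\norm{\x - \y}^2 = \sum_{i=1}^{m-1}\norm{x_i - y_i}^2$ delivers $\inner{\x - \y}{\u - \v} \geq \sigmabf{\F}\norm{\x - \y}^2$.

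For the maximality statement, my plan is to verify the resolvent-domain criterion of \cref{lemma:maximal_single-valued-resolvent} directly for $\F$. Fix any $\gamma > 0$ small enough that $1 + \gamma\sigmabf{\F} > 0$; then $1 + \gamma\sigma_i \geq 1 + \gamma\sigmabf{\F} > 0$ for every $i$, so \cref{lemma:maximal_single-valued-resolvent} applied to each maximal $\sigma_i$-monotone operator $A_i$ yields $\dom(J_{\gamma A_i}) = \H$. From the product structure of $\F$ one checks directly that $J_{\gamma\F}(\x) = J_{\gamma A_1}(x_1) \times \cdots \times J_{\gamma A_{m-1}}(x_{m-1})$ (the $\bfLambda = \bfId$ case of \cref{prop:F_resolvent}, proved by the same short argument), whence $\dom(J_{\gamma\F}) = \H^{m-1}$. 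Combined with the $\sigmabf{\F}$-monotonicity of $\F$ already in hand, the converse direction of \cref{lemma:maximal_single-valued-resolvent} yields maximal $\sigmabf{\F}$-monotonicity of $\F$.

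The main obstacle in this plan is modest: it amounts to verifying the product resolvent formula, which follows from the coordinatewise splitting of $\bfId + \gamma\F$. A more classical alternative route, which is closer in spirit to the interior hypothesis $\interior(\dom(A_i)) \neq \emptyset$, would be to use \cref{lemma:maximal_sigma_equivalent} to reduce to showing $\F - \sigmabf{\F}\bfId$ is maximal monotone, decompose each factor as $(A_i - \sigma_i\Id) + (\sigma_i - \sigmabf{\F})\Id$ with the first summand maximal monotone by \cref{lemma:maximal_sigma_equivalent} and the second a monotone operator with full domain, apply \cref{lemma:maximalmonotone_properties}(ii) factorwise using the interior hypothesis, and finally assemble the product via the same resolvent-domain argument before unwinding the shift.
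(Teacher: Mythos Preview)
Your proposal is correct and follows essentially the same route as the paper: the $\sigmabf{\F}$-monotonicity is the same coordinatewise sum, and for maximality the paper also fixes $\gamma>0$ with $1+\gamma\sigmabf{\F}>0$, invokes \cref{lemma:maximal_single-valued-resolvent} on each $A_i$ to get $\dom(J_{\gamma A_i})=\H$, uses \cref{prop:F_resolvent} to conclude $\dom(J_{\gamma\F})=\H^{m-1}$, and then applies \cref{lemma:maximal_single-valued-resolvent} in reverse. Your side remark is on point: the resolvent-domain argument does not actually use the hypothesis $\interior(\dom(A_i))\neq\emptyset$, which would only be needed for the alternative sum-rule route via \cref{lemma:maximalmonotone_properties}(ii) that you sketch.
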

\begin{proof}
Let $(\x,\u), (\y,\v)\in \gra (\F)$. Assuming that $A_i$ is $\sigma_i$-monotone for all $i=1,\dots,m-1$, we have
    \begin{align*}
        \lla \x -\y, \u - \v \rla = \sum_{i=1}^{m-1}\lla x_i-y_i, u_i-v_i \rla \geq \sum_{i=1}^{m-1}\sigma_i \norm{x_i-y_i}^2 \geq \sigmabf{\F}\norm{\x-\y}^2.
    \end{align*}
Hence, $\F$ is $\sigmabf{\F}$-monotone. Assume now that each $A_i$ is maximal $\sigma_i$-monotone and let $\gamma>0$ such that $1+\gamma \sigmabf{\F}>0$. Then $1+\gamma\sigma_i>0$ for all $i=1,\dots,m-1$, and since $A_i$ is maximal $\sigma_i$-monotone, we have from \cref{lemma:maximal_single-valued-resolvent} that $\dom (J_{\gamma A_i})=\H$. It follows from \cref{prop:F_resolvent} that $\dom (J_{\gamma \F})=\H^{m-1}$. Hence, $\F$ is maximal $\sigmabf{\F}$-monotone by \cref{lemma:maximal_single-valued-resolvent}. 

\smartqedmark \end{proof}

As for $\G$, we first establish its monotonicity in the following result. 

\begin{proposition}
\label{prop:G_monotone}
    Suppose that $A_m$ is $\sigma_m$-monotone. Then $\G$ given by \eqref{eq:Gnew} is $\left(\frac{\sigma_m\bar{\lambda}}{m-1}\right)$-monotone, where $\bar{\lambda}\coloneqq \sum_{i=1}^{m-1}\lambda_i$. 
\end{proposition}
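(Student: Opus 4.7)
The plan is to unpack $\G(\x) = \K(\x) + N_{\D_{m-1}}(\x)$ pairwise, exploit the orthogonality $N_{\D_{m-1}}(\x) = \D_{m-1}^{\perp}$ to kill the normal-cone contributions when pairing against a difference of diagonal points, and then invoke $\sigma_m$-monotonicity of $A_m$.

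First I would observe that $\dom(\G)\subseteq \D_{m-1}$, since $\K(\x)=\emptyset$ off the diagonal and $N_{\D_{m-1}}(\x)=\emptyset$ off $\D_{m-1}$. Thus any $(\x,\u),(\y,\v)\in\gra(\G)$ have the form $\x=\bfDelta_{m-1}(x)$, $\y=\bfDelta_{m-1}(y)$, and by the definitions \eqref{eq:Knew} and \eqref{eq:Gnew} there exist $u\in A_m(x)$, $v\in A_m(y)$ and $\n=(n_1,\dots,n_{m-1}),\n'=(n_1',\dots,n_{m-1}')\in \D_{m-1}^{\perp}$ such that
\[
\u = (\lambda_1 u,\dots,\lambda_{m-1}u) + \n, \qquad \v = (\lambda_1 v,\dots,\lambda_{m-1}v) + \n',
\]
with $\sum_i n_i = \sum_i n_i' = 0$.

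Next I would compute $\lla \x-\y,\u-\v\rla$ coordinate-wise. Since $x_i-y_i = x-y$ for every $i$, the normal-cone pieces contribute
\[
\sum_{i=1}^{m-1}\lla x-y,\, n_i - n_i'\rla = \left\lla x-y,\, \sum_{i=1}^{m-1}(n_i-n_i')\right\rla = 0,
\]
which is the key cancellation, while the $\K$ pieces contribute $\sum_i \lambda_i\lla x-y,u-v\rla = \bar{\lambda}\lla x-y,u-v\rla$. Applying $\sigma_m$-monotonicity of $A_m$ to $(x,u),(y,v)\in\gra(A_m)$ (and using that $\bar{\lambda}>0$, so the inequality is preserved regardless of the sign of $\sigma_m$) gives
\[
\lla \x-\y,\u-\v\rla \;\geq\; \bar{\lambda}\,\sigma_m\,\norm{x-y}^2.
\]

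To finish, I would use the identity $\norm{\x-\y}^2 = \sum_{i=1}^{m-1}\norm{x-y}^2 = (m-1)\norm{x-y}^2$ to convert the right-hand side into the desired modulus $\tfrac{\sigma_m\bar{\lambda}}{m-1}\norm{\x-\y}^2$. There is no real obstacle here: the proof is essentially bookkeeping, and the only subtle point worth flagging is the sign discussion when $\sigma_m<0$, which is handled automatically because $\bar{\lambda}$ and $m-1$ are positive. This yields the claimed $\left(\tfrac{\sigma_m\bar{\lambda}}{m-1}\right)$-monotonicity of $\G$.
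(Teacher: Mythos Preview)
Your proof is correct and follows essentially the same approach as the paper's: unpack $\gra(\G)$ using $\dom(\G)\subseteq\D_{m-1}$, kill the normal-cone contribution via the orthogonality $\x-\y\in\D_{m-1}\perp(\n-\n')$, apply $\sigma_m$-monotonicity of $A_m$, and convert $\norm{x-y}^2$ to $\tfrac{1}{m-1}\norm{\x-\y}^2$. You are slightly more explicit than the paper about the final conversion step and about the sign of $\bar\lambda$; the paper simply writes ``Using the $\sigma_m$-monotonicity of $A_m$ gives the desired conclusion'' after obtaining $\sum_i\lambda_i\lla x-y,u'-v'\rla$.
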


\begin{proof}
     Let $(\x,\u), (\y,\v)\in \gra (\G)$. Then $\x = (x,\dots, x) $ and $\y = (y,\dots, y)$ for some $x,y\in \dom (A_m)$, while $\u = \bfLambda (u',\dots,u')+\n_{\u}$ and $\v = \bfLambda(v',\dots,v') + \n_{\v}$ for some $u' \in A_m(x)$, $v'\in A_m(y)$ and $\n_{\u},\n_{\v}\in \D_{m-1}^\perp$. \memosolved{AT: $\n_{\u}\in N_{\D_{m-1}}(\x),\n_{\v}\in N_{\D_{m-1}}(\y)$? Or $\n_{\u},\n_{\v}\in  \D_{m-1}^\perp$ seems OK to me.}
     \begin{align}
        \lla \x -\y, \u - \v \rla & = \sum_{i=1}^{m-1}\lla x-y, \lambda_i u' - \lambda_i v' \rla + \lla \x - \y , \n_{\u}-\n_{\v}\rla  = \sum_{i=1}^{m-1}\lambda_i \lla x-y,  u' - v' \rla \notag  
    \end{align}
   where \memosolved{AT: This will be equality because of orthogonal complement? I do not understand well the reason is due to the convexity of $\D_{m-1}$.  } the second equality holds by the definition of orthogonal complement. Using the $\sigma_m$-monotonicity of $A_m$ gives the desired conclusion. 
\smartqedmark \end{proof}

Unfortunately, it is not immediately apparent whether or not the function $\G = \K + N_{\D_{m-1}}$ given in \eqref{eq:Gnew} is maximal $\sigmabf{\G}$-monotone due to the definition of $\K$ (see \eqref{eq:Knew}). Consider the simple case when $A_m$ is maximal monotone (i.e., $\sigma_m = 0$). While $N_{\D_{m-1}}$ is maximal monotone, being the subdifferential of the indicator function of the nonempty closed convex set $\D_{m-1}$, the mapping $\K$ given in \eqref{eq:Knew} is only a monotone mapping. To see this, we simply observe that $\gra (\K) \subseteq \gra (\hat{\K})$ where $\hat{\K}$ is the (maximal) monotone map defined in \eqref{eq:Khat}. Consequently, we cannot use \cref{lemma:maximalmonotone_properties} (iv) (as we have done in \cref{prop:F_monotone}\memosolved{AT: Is it correct? \cref{prop:F_monotone}?}) to conclude the maximal monotonicity of $\G$. 

Luckily, we have the following proposition stating that whenever $A_m$ is convex-valued and the weights are in $[0,1]$, we can replace $\K$ in \eqref{eq:Gnew} with $\hat{\K}$  and still obtain the same operator $\G$, despite the fact that $\gra (\K) \subseteq \gra (\hat{\K})$. 

\begin{proposition}
\label{prop:G_Khat}
    Let $\hat{\K}:\H^{m-1}\toset \H^{m-1}$ and $\G:\H^{m-1} \toset \H^{m-1}$ be given by \eqref{eq:Khat} and \eqref{eq:Gnew}, respectively, and suppose that $\lambda_1,\dots, \lambda_{m-1} \in [0,1]$ such that $\sum_{i=1}^{m-1} \lambda_i= 1$.  If $A_m:\H\toset \H$ is convex-valued, then  
    \begin{equation}
        \G (\x) = \hat{\K}(\x)+ N_{\D _{m-1}}(\x) \quad \forall \x \in \H^{m-1}.
        \label{eq:G_Khat}
    \end{equation}
\end{proposition}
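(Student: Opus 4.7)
The strategy will be to reduce the problem to the case $\x \in \D_{m-1}$ and then compare the two sides coordinate-by-coordinate, using convex-valuedness of $A_m$ to absorb the freedom in $\hat{\K}$ into the normal-cone component. Since $N_{\D_{m-1}}(\x) = \emptyset$ for $\x \notin \D_{m-1}$, both sides of \eqref{eq:G_Khat} are empty outside the diagonal and the identity holds trivially there; I can therefore restrict attention to $\x = (x,\dots,x)$ with $x \in \dom(A_m)$, where $N_{\D_{m-1}}(\x) = \D_{m-1}^{\perp}$.

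The inclusion $\G(\x) \subseteq \hat{\K}(\x) + N_{\D_{m-1}}(\x)$ is immediate from the observation already made in the excerpt, namely $\K(\x) \subseteq \hat{\K}(\x)$ on $\D_{m-1}$, obtained by choosing the same $v \in A_m(x)$ across all coordinates.

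The crux is the reverse inclusion. Given an arbitrary element $(\lambda_1 v_1,\dots,\lambda_{m-1} v_{m-1}) + \w$ of $\hat{\K}(\x) + N_{\D_{m-1}}(\x)$, with $v_i \in A_m(x)$ and $\w = (w_1,\dots,w_{m-1})$ satisfying $\sum_i w_i = 0$, I propose the candidate $v := \sum_{i=1}^{m-1} \lambda_i v_i$ as the common selection required by the definition of $\K(\x)$. The rewrite
\begin{equation*}
   \lambda_i v_i + w_i = \lambda_i v + \bigl( w_i + \lambda_i (v_i - v) \bigr)
\end{equation*}
realigns each coordinate so that the first piece collapses to $(\lambda_1 v,\dots,\lambda_{m-1} v) \in \K(\x)$, while the residuals $w_i' := w_i + \lambda_i(v_i - v)$ still sum to zero thanks to $\sum_i \lambda_i = 1$. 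This is a short direct computation and shifts the whole element into $\K(\x) + \D_{m-1}^{\perp}= \G(\x)$.

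The main—and essentially only—obstacle is verifying that the candidate $v$ actually belongs to $A_m(x)$. It is a convex combination of elements of $A_m(x)$ with weights $\lambda_i \in [0,1]$ summing to one, so the convex-valuedness assumption on $A_m$ closes the argument. This is precisely the step where all three hypotheses $\lambda_i \in [0,1]$, $\sum_i \lambda_i = 1$, and convex-valuedness of $A_m$ are used; dropping any one of them would allow $v$ to leave $A_m(x)$, so that the strict inclusion $\K(\x) \subsetneq \hat{\K}(\x)$ noted after \eqref{eq:Khat} would propagate to the sums and \eqref{eq:G_Khat} could fail.
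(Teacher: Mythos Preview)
Your proof is correct and follows essentially the same approach as the paper: reduce to $\x\in\D_{m-1}$, use $\K(\x)\subseteq\hat{\K}(\x)$ for one inclusion, and for the reverse take $v=\sum_i\lambda_i v_i\in A_m(x)$ by convex-valuedness, then check that the residual lies in $\D_{m-1}^{\perp}$. The paper verifies the latter by summing $y_i-\lambda_i v$ directly, whereas you write out the explicit residuals $w_i'=w_i+\lambda_i(v_i-v)$, but this is only a notational variation.
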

\begin{proof}
    Both the left-hand and the right-hand sides of \eqref{eq:G_Khat} are empty when $\x\notin \D_{m-1}$. Suppose now that $\x =(x,\dots,x)\in \D_{m-1}$. As mentioned above, $\K(\x) \subseteq \hat{\K}(\x)$, and therefore the inclusion $\G(\x) \subseteq \hat{\K}(\x) + N_{\D_{m-1}}(\x)$ holds. Let $\y \in \hat{\K}(\x) + N_{\D_{m-1}}(\x)$. Then there exists $\v =(v_1,\dots,v_{m-1})\in A_m(x)\times \cdots \times A_m(x)$ such that $\y - \bfLambda \v \in N_{\D_{m-1}}(\x)$.
    Let $v \coloneqq \sum_{i=1}^{m-1}\lambda_i v_i.  $ Since $A_m(x)$ is convex, it follows that $v\in A_m(x)$ and $\bfLambda(\bfDelta_{m-1} (v)) = (\lambda_1 v,\dots,\lambda_{m-1}v) \in \K(\x)$. Moreover, 
    \begin{equation}
       \textstyle  \sum_{i=1}^{m-1} (y_i-\lambda_i v )  =   \sum_{i=1}^{m-1} y_i -  v  =   \sum_{i=1}^{m-1} y_i -  \sum_{i=1}^{m-1} \lambda_i v_i  =0, 
        \label{eq:y-Lambdav}
    \end{equation}
    where the first equality holds since $\sum_{i=1}^{m-1} \lambda_i= 1$, the second holds by the definition of $v$, and the last equality holds since  $\y - \bfLambda \v \in N_{\D_{m-1}}(\x)$. From \eqref{eq:y-Lambdav}, it follows that $\y - \bfLambda(\bfDelta_{m-1} (v))  \in N_{\D_{m-1}}(\x)$. Hence, $\y \in \K(\x) + N_{\D_{m-1}}(\x)$, and therefore $\y \in \G(\x)$. This proves the other inclusion. 
\smartqedmark \end{proof}

\begin{remark}
As noted in \cite{AlcantaraDaoTakeda2025}, maximal $\sigma$-monotone operators are convex-valued. Hence, by \cref{prop:G_Khat}, if $A_m$ is maximal $\sigma$-monotone and $\lambda_1=\cdots=\lambda_{m-1}=1/(m-1)$, the reformulation \eqref{eq:FG_reformulation_new} coincides with Campoy’s product-space reformulation \eqref{eq:FG_reformulation}. The discussion in this section focuses on $\sigma$-monotone operators and can therefore be viewed as an analysis of the Douglas–Rachford algorithm applied to the weighted product-space reformulation of Campoy. In \cref{sec:nonconvexopt_finitedimensional}, we instead take $A_m$ to be the subdifferential of a proper closed function, in which case the operator is generally not convex-valued.

\end{remark}

Using the above proposition, we establish the maximal $\sigmabf{\G}$-monotonicity of $\G$ for some parameter $\sigmabf{\G}$. 
\begin{proposition}
    \label{prop:G_monotone2}
    Suppose that $A_m$ is maximal $\sigma_m$-monotone whose domain has a nonempty interior. If $\sum_{i=1}^{m-1}\lambda_i = 1$, then $\G$ given by \eqref{eq:Gnew} is maximal $\sigmabf{\G}$-monotone with  $\sigmabf{\G} \coloneqq  \sigma_m\lambda_{\min}$, where $\lambda_{\min} \coloneqq \min_{i=1,\dots,m-1}\lambda_i$. 
\end{proposition}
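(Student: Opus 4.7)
My plan is to combine \cref{prop:G_Khat} with \cref{lemma:maximal_sigma_equivalent} so that the maximal $\sigma$-monotonicity question for $\G$ reduces to showing ordinary maximal monotonicity of $\G - \sigmabf{\G}\bfId$. Since $A_m$ is maximal $\sigma_m$-monotone, the remark preceding the statement says that $A_m$ is convex-valued; because $\sum_{i=1}^{m-1}\lambda_i=1$ and each $\lambda_i>0$, we also have $\lambda_i\in(0,1]$, so the hypotheses of \cref{prop:G_Khat} are met, yielding the decomposition $\G = \hat{\K} + N_{\D_{m-1}}$, with $\hat\K$ as in \eqref{eq:Khat}. This is the workhorse identity I would exploit: $\hat\K$ has a clean product structure that $\K$ lacks.

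Next I would handle $\hat\K$ one factor at a time. Since $A_m$ is maximal $\sigma_m$-monotone, \cref{lemma:maximal_sigma_equivalent} gives that $A_m - \sigma_m\Id$ is maximal monotone, and scaling by $\lambda_i>0$ preserves maximal monotonicity, so $\lambda_i A_m - \lambda_i\sigma_m\Id$ is maximal monotone for each $i$. Equivalently, each $\lambda_iA_m$ is maximal $(\lambda_i\sigma_m)$-monotone. I would then show that $\hat\K - \sigmabf{\G}\bfId$ is maximal monotone by writing the $i$-th Cartesian component as
\[
\lambda_i A_m - \sigmabf{\G}\Id \;=\; (\lambda_i A_m - \lambda_i\sigma_m\Id) \;+\; (\lambda_i\sigma_m - \sigma_m\lambda_{\min})\Id,
\]
noting that $\lambda_i\sigma_m - \sigma_m\lambda_{\min}\geq 0$ (when $\sigma_m\geq 0$), so the second summand is a nonnegative multiple of the identity (maximal monotone on all of $\H$), and applying \cref{lemma:maximalmonotone_properties}(ii) with the interior condition (which is trivial because one summand has full domain). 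Finally, a direct product of maximal monotone operators is maximal monotone (verified via Minty, since resolvents act componentwise and each factor has full-range resolvent), so $\hat\K - \sigmabf{\G}\bfId$ is maximal monotone on $\H^{m-1}$.

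It then remains to absorb $N_{\D_{m-1}}$. Since $N_{\D_{m-1}}$ is the subdifferential of the indicator of a nonempty closed convex set, it is maximal monotone; moreover, by picking any $\bar{x}\in\interior(\dom(A_m))$ and noting that $\bfDelta_{m-1}(\bar{x})\in \interior(\dom(\hat\K)) \cap \D_{m-1}$, the Rockafellar interior qualification in \cref{lemma:maximalmonotone_properties}(ii) holds. Applying that lemma gives that
\[
\G - \sigmabf{\G}\bfId \;=\; (\hat\K - \sigmabf{\G}\bfId) + N_{\D_{m-1}}
\]
is maximal monotone, and invoking \cref{lemma:maximal_sigma_equivalent} once more concludes that $\G$ is maximal $\sigmabf{\G}$-monotone.

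The main obstacle I anticipate is the bookkeeping around maximal monotonicity of the product: specifically, justifying that $\hat\K - \sigmabf{\G}\bfId$ is maximal monotone requires combining the per-factor decomposition above with the direct-product maximality. A secondary subtlety is the sign of $\sigma_m$: the decomposition of each factor into a maximal monotone piece plus a nonnegative multiple of $\Id$ only works cleanly when $\sigma_m(\lambda_i - \lambda_{\min})\geq 0$, which is exactly the step where the minimum (rather than the maximum) of the $\lambda_i$ enters the modulus. Once this component-wise argument is carried out, the sum rule with $N_{\D_{m-1}}$ is routine.
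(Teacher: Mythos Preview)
Your approach is essentially identical to the paper's: invoke \cref{prop:G_Khat} to replace $\K$ by $\hat\K$, show that each factor $\lambda_i A_m - \lambda_{\min}\sigma_m\Id = (\lambda_iA_m-\lambda_i\sigma_m\Id)+\sigma_m(\lambda_i-\lambda_{\min})\Id$ is maximal monotone, pass to the product to get $\hat\K-\sigmabf{\G}\bfId$ maximal monotone, and then apply the Rockafellar sum rule with $N_{\D_{m-1}}$ using the interior assumption on $\dom(A_m)$. The paper's proof is precisely this, citing \cite[Proposition~20.23]{Bauschke2017} for the product step. Your flagged ``secondary subtlety'' about the sign of $\sigma_m$ is on point and is not addressed in the paper either: the decomposition above yields a monotone second summand only when $\sigma_m(\lambda_i-\lambda_{\min})\geq 0$, so the argument as written (both yours and the paper's) is clean only for $\sigma_m\geq 0$; for $\sigma_m<0$ with unequal weights the modulus $\sigma_m\lambda_{\min}$ would need to be replaced by $\sigma_m\lambda_{\max}$ for the per-factor step to go through.
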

\begin{proof}
    To show maximal $\sigmabf{\G}$-monotonicity, we first note that by \cref{lemma:maximal_sigma_equivalent} and \cref{lemma:maximalmonotone_properties}(i), $A_m-\sigma_m \Id$ is convex-valued. Hence, $A_m$ is also convex-valued. By \cref{prop:G_Khat}, the claim follows if we can show that $\hat{\K} + N_{\D_{m-1}}$ is maximal $\sigmabf{\G}$-monotone. To this end, note that for each $i=1,\dots,m-1$, $\lambda_i A_m -\lambda_{\min}\sigma_m\Id = (\lambda_iA_m - \lambda_i\sigma_m \Id) +\sigma_m (\lambda_i - \lambda_{\min})\Id$ is maximal monotone by \cref{lemma:maximal_sigma_equivalent} and \cref{lemma:maximalmonotone_properties}(ii). Thus,  by \cite[Proposition 20.23]{Bauschke2017}, the mapping $\x \mapsto (\lambda_1 A_m -\lambda_{\min}\sigma_m\Id)(x_1)\times \cdots \times (\lambda_{m-1} A_m -\lambda_{\min}\sigma_m\Id)(x_{m-1})$ is maximal monotone. In other words, $\hat{\K} -\sigmabf{\G}\bfId$ is maximal monotone. Since the domain of $A_m$ has a nonempty interior and $N_{\D_{m-1}}$ is maximal monotone, it follows from \cref{lemma:maximalmonotone_properties}(ii) that  $(\hat{\K}-\sigmabf{\G}\bfId)+N_{\D_{m-1}}$ is maximal monotone. Therefore, $\hat{\K}+N_{\D_{m-1}}$ is maximal $\sigmabf{\G}$-monotone by applying again \cref{lemma:maximal_sigma_equivalent}. This completes the proof. 
\smartqedmark \end{proof}

When the weights $\lambda_i$ are equal, we also obtain the following result without the additional assumption that the domain of $A_m$ has a nonempty interior.
\begin{proposition}
\label{prop:G_monotone3}
Suppose that $A_m$ is maximal $\sigma_m$-monotone. Let $\G$ be given by  \eqref{eq:Gnew} with $\lambda_i=\frac{1}{m-1}$ for $i=1,\dots,m-1$. Then $\G$ is maximal $\left(\frac{\sigma_m}{m-1}\right)$-monotone. 
\end{proposition}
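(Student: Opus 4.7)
The plan is to sidestep the sum-rule argument used in \cref{prop:G_monotone2} (which required $\interior(\dom(A_m))\neq\emptyset$) by verifying maximality directly through the resolvent, using the characterization in \cref{lemma:maximal_single-valued-resolvent}. Concretely, I will first note that $\G$ is $\bigl(\tfrac{\sigma_m}{m-1}\bigr)$-monotone, and then show that its resolvent has full domain on an appropriate range of parameters.

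First, applying \cref{prop:G_monotone} with $\lambda_i=\tfrac{1}{m-1}$ gives $\bar{\lambda}=\sum_{i=1}^{m-1}\lambda_i=1$, so $\G$ is $\bigl(\tfrac{\sigma_m}{m-1}\bigr)$-monotone. It remains to establish maximality. For equal weights, $\bfLambda = \tfrac{1}{m-1}\bfId$, hence $\bfLambda^{-1}=(m-1)\bfId$ and consequently, for any $\lambda>0$,
\[
J_{\lambda\G}^{\bfLambda} = \bigl(\bfId + \lambda\bfLambda^{-1}\G\bigr)^{-1} = \bigl(\bfId + \lambda(m-1)\G\bigr)^{-1} = J_{\lambda(m-1)\,\G}.
\]
Thus setting $\gamma\coloneqq \lambda(m-1)$ and specializing \cref{prop:G_resolvent} (with $\bar{\lambda}=1$), one obtains the explicit formula
\[
J_{\gamma\G}(\x) \;=\; \bfDelta_{m-1}\!\left(J_{\gamma/(m-1)\,A_m}\!\left(\tfrac{1}{m-1}\textstyle\sum_{i=1}^{m-1}x_i\right)\right) \quad \forall \x \in \H^{m-1}.
\]

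Now, maximal $\sigma_m$-monotonicity of $A_m$ combined with \cref{lemma:maximal_single-valued-resolvent} yields $\dom\bigl(J_{\gamma/(m-1)\,A_m}\bigr)=\H$ whenever $1+\tfrac{\gamma}{m-1}\sigma_m>0$. Substituting into the displayed formula shows $\dom(J_{\gamma\G})=\H^{m-1}$ on exactly the same range, namely for all $\gamma>0$ satisfying $1+\gamma\cdot\tfrac{\sigma_m}{m-1}>0$. Since $\G$ is $\bigl(\tfrac{\sigma_m}{m-1}\bigr)$-monotone and its resolvent has full domain on this admissible range, the converse direction of \cref{lemma:maximal_single-valued-resolvent} gives that $\G$ is maximal $\bigl(\tfrac{\sigma_m}{m-1}\bigr)$-monotone, completing the proof.

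The only subtle point to confirm is that \cref{prop:G_resolvent} is a set-valued identity valid with no monotonicity assumption; hence the domain of $J_{\gamma\G}$ can be read off from the domain of $J_{\gamma/(m-1)\,A_m}$ without any interior condition on $\dom(A_m)$. This is precisely what allows the argument here to avoid the hypothesis used in \cref{prop:G_monotone2}.
\smartqedmark
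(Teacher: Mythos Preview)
Your proof is correct and follows essentially the same route as the paper: establish $\bigl(\tfrac{\sigma_m}{m-1}\bigr)$-monotonicity via \cref{prop:G_monotone}, use \cref{prop:G_resolvent} together with \cref{lemma:maximal_single-valued-resolvent} applied to $A_m$ to show $J_{\gamma\G}$ has full domain when $1+\gamma\tfrac{\sigma_m}{m-1}>0$, and then invoke \cref{lemma:maximal_single-valued-resolvent} once more for maximality. Your explicit identification $J_{\lambda\G}^{\bfLambda}=J_{\lambda(m-1)\G}$ is a helpful clarifying step, but the underlying argument is the same.
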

\begin{proof}
  Since $A_m$ is maximal $\sigma_m$-monotone, we have from \cref{lemma:maximal_single-valued-resolvent} that $J_{\frac{\gamma}{m-1}A_m}$ has full domain if $1+\gamma \frac{\sigma}{m-1}>0$. Hence, under the same condition, we see from \cref{prop:G_resolvent} that $J_{\gamma\G}$ also has full domain. Together with the fact that $\G$ is  $\left(\frac{\sigma_m}{m-1}\right)$-monotone from \cref{prop:G_monotone}, we invoke again \cref{lemma:maximal_single-valued-resolvent} to conclude that $\G$ is maximal $\left(\frac{\sigma_m}{m-1}\right)$-monotone.
\smartqedmark \end{proof}
We next establish some properties of the \textit{reflected $\bfLambda$-warped resolvents} of $\F$ and $\G$, given by \eqref{eq:reflected_warped_resolvent}.

\begin{proposition}[Properties of reflected warped resolvents]
    Let $A_i:\H\to \H$ be $\sigma_i$-monotone for each $i=1,\dots,m$. 
    Let $\lambda,\lambda_1,\dots, \lambda_{m-1}\in (0,+\infty)$, $\bfLambda$ be given by \eqref{eq:Lambda}, and define
   \begin{equation}
   \textstyle \innerlambda{\x}{\y} \coloneqq \inner{\x}{\bfLambda \y} = \sum_{i=1}^{m-1} \lambda_i \inner{x_i}{y_i}\quad \text{and} \quad \normlambda{\x}  \coloneqq \sqrt{\innerlambda{\x}{\x}} ,
       \label{eq:newinnerproduct}
   \end{equation}
    for any $\x = (x_1,\dots,x_{m-1}), \y = (y_1,\dots,y_{m-1})\in \H^{m-1}.$
    \ifdefined\submit 
     \begin{enumerate}[(i)]
        \item 
        $\normlambda{{\a}' - {\b}'}^2 \leq \normlambda{\x-\y}^2 - 4\lambda \sum_{i=1}^{m-1}\sigma_i \norm{a_i-b_i}^2,$ for any $(\x,{\a}'), (\y,{\b}')\in \gra (\RLambda{\F})$,
        where $\a=(a_1,\dots,a_{m-1})\in \JLambda{\F}(\x)$ and $\b = (b_1,\dots,b_{m-1}) \in \JLambda{\F}(\y)$ are such that ${\a}'=2\a - \x$ and ${\b}'= 2\b - \y$. 

        \item 
        $\normlambda{\a' - \b '}^2 \leq \normlambda{\x-\y}^2 - 4\lambda\sigma_m \normlambda{\a-\b}^2, $ for any $(\x,\a'), (\y,\b')\in \gra (\RLambda{\G})$, 
        where $\a\in \JLambda{\G}(\x)$ and $\b \in \JLambda{\G}(\y)$ are such that ${\a}'=2\a - \x$ and ${\b}'= 2\b - \y$. 
    \end{enumerate}
    \else 
    \begin{enumerate}[(i)]
        \item For any $(\x,{\a}'), (\y,{\b}')\in \gra (\RLambda{\F})$,
        \[\normlambda{{\a}' - {\b}'}^2 \leq \normlambda{\x-\y}^2 - 4\lambda \sum_{i=1}^{m-1}\sigma_i \norm{a_i-b_i}^2,\]
        where $\a=(a_1,\dots,a_{m-1})\in \JLambda{\F}(\x)$ and $\b = (b_1,\dots,b_{m-1}) \in \JLambda{\F}(\y)$ are such that ${\a}'=2\a - \x$ and ${\b}'= 2\b - \y$. 

        \item For any $(\x,\a'), (\y,\b')\in \gra (\RLambda{\G})$,
        \[\normlambda{\a' - \b '}^2 \leq \normlambda{\x-\y}^2 - 4\lambda\sigma_m \normlambda{\a-\b}^2, \]
        where $\a\in \JLambda{\G}(\x)$ and $\b \in \JLambda{\G}(\y)$ are such that ${\a}'=2\a - \x$ and ${\b}'= 2\b - \y$. 
    \end{enumerate}
    \fi 
\label{prop:properties_reflected_resolvents}
\end{proposition}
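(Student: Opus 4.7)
The plan is to leverage the definition of the $\bfLambda$-warped resolvent (\cref{defn:warpedresolvent}): for $\a\in \JLambda{A}(\x)$ we have $\bfLambda(\x-\a)/\lambda \in A(\a)$. Combined with the appropriate generalized monotonicity of $\F$ (inherited componentwise from the $A_i$'s) and of $\G$ (inherited from $A_m$ in a $\bfLambda$-weighted sense), the result will follow by substituting into the algebraic identity
\[ \normlambda{2p-q}^2 = 4\normlambda{p}^2 - 4\innerlambda{p}{q} + \normlambda{q}^2, \]
which is a direct specialization of \eqref{eq:identity_squarednorm} to the $\bfLambda$-weighted inner product, applied with $p = \a-\b$, $q = \x-\y$ (so that $\a'-\b' = 2p-q$).

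For (i), the separable structure of $\F$ implies that $\a\in\JLambda{\F}(\x)$ is equivalent to $\lambda_i(x_i-a_i)/\lambda \in A_i(a_i)$ for every $i$, and analogously for $\b$ and $\y$. Applying $\sigma_i$-monotonicity of each $A_i$ componentwise, multiplying by $\lambda$, and summing over $i$ yields
\[ \innerlambda{\a-\b}{(\x-\y)-(\a-\b)} \geq \lambda \sum_{i=1}^{m-1}\sigma_i\norm{a_i-b_i}^2. \]
Substituting this into the squared-norm identity above and rearranging gives (i).

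For (ii), the same template applies, but I first need the $\bfLambda$-weighted monotonicity of $\G$, namely $\inner{\a-\b}{\u-\v}\geq \sigma_m\normlambda{\a-\b}^2$ for all $\u\in\G(\a)$, $\v\in\G(\b)$. This is the main technical step and the primary obstacle. Unpacking the structure of $\gra(\G)$, any such $(\a,\u)$ must satisfy $\a = \bfDelta_{m-1}(a)$ and $\u = \bfLambda\bfDelta_{m-1}(u') + n_u$ with $u'\in A_m(a)$ and $n_u\in\D_{m-1}^\perp$, and analogously for $(\b,\v)$. Since $\a-\b\in\D_{m-1}$, the normal-cone components vanish in the inner product, leaving $\bar\lambda\inner{a-b}{u'-v'}\geq \bar\lambda\sigma_m\norm{a-b}^2 = \sigma_m\normlambda{\a-\b}^2$ by $\sigma_m$-monotonicity of $A_m$. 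With this inequality in hand, applying it to $\bfLambda(\x-\a)/\lambda\in\G(\a)$ and its analog for $\b$ produces
\[ \innerlambda{\a-\b}{(\x-\y)-(\a-\b)} \geq \lambda\sigma_m\normlambda{\a-\b}^2, \]
and substitution into the squared-norm identity yields (ii). Once the weighted monotonicity of $\G$ is established, both parts collapse to the same mechanical manipulation.
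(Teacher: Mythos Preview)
Your proposal is correct and follows essentially the same route as the paper: both proofs extract an element of $\F(\a)$ (resp.\ $\G(\a)$) from the warped-resolvent definition, apply componentwise $\sigma_i$-monotonicity of the $A_i$'s (resp.\ unpack $\G=\K+N_{\D_{m-1}}$, use $\a-\b\in\D_{m-1}$ to kill the normal-cone contribution, and apply $\sigma_m$-monotonicity of $A_m$), and then substitute into the identity $\normlambda{2(\a-\b)-(\x-\y)}^2 = 4\normlambda{\a-\b}^2 - 4\innerlambda{\a-\b}{\x-\y} + \normlambda{\x-\y}^2$. The paper organizes the intermediate step as $\innerlambda{\x-\y}{\a-\b}\geq \normlambda{\a-\b}^2 + (\cdot)$ whereas you write the equivalent $\innerlambda{\a-\b}{(\x-\y)-(\a-\b)}\geq(\cdot)$, but the content is identical.
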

\begin{proof}
    We first prove part (i). Since $(\x,\a), (\y,\b)\in \gra (\JLambda{\F})$, we have $\x \in (\bfId + \lambda \bfLambda^{-1} \circ \F ) (\a)$ and  $\y \in (\bfId + \lambda \bfLambda^{-1} \circ \F ) (\b)$. Thus, there exist $\u \in \F(\a)$ and $\v \in \F(\b)$ such that  $\bfLambda \x = \bfLambda \a + \lambda \u$ and $\bfLambda \y = \bfLambda \b + \lambda \v$.  Consequently,
    \begin{align}
        \innerlambda{\x-\y}{\a-\b}  & = \inner{\bfLambda (\x -\y )}{\a-\b} 
        = \inner{\bfLambda (\a - \b) + \lambda (\u - \v)}{\a - \b}  \notag \\
        & \textstyle  = \normlambda{\a - \b}^2+ \lambda \sum_{i=1}^{m-1} \inner{u_i-v_i}{a_i-b_i} \notag \\ 
        & \textstyle  \geq \normlambda{\a - \b}^2 + \lambda \sum_{i=1}^{m-1} \sigma_i \norm{a_i-b_i}^2 , \label{eq:JF-cocoercive}
    \end{align}
    where we have used the $\sigma_i$-monotonicity of $A_i$ in the last inequality. On the other hand, 
    \begin{align}
        \normlambda{\a' - \b'}^2 & = \normlambda{2(\a -\b) - (\x - \y)} ^2 = \normlambda{\x-\y}^2 - 4\innerlambda{\x-\y}{\a-\b} + 4\normlambda{\a-\b}^2. \label{eq:identity_normlambda}
    \end{align}
    Combining this with \eqref{eq:JF-cocoercive} proves the claim of part (i).

    To prove part (ii), we follow the same argument in part (i) to show that 
    \begin{equation}
        \innerlambda{\x - \y}{\a - \b} = \normlambda{\a - \b}^2 + \lambda \inner{\u-\v}{\a-\b},
        \label{eq:innerlambda}
    \end{equation}
where $\u \in \G(\a)$ and $\v \in \G (\b)$ such that  $\bfLambda \x = \bfLambda \a + \lambda \u$ and $\bfLambda \y = \bfLambda \b + \lambda \v$. By the definition of $\G$, there exist $\u'\in \K (\a)$, $\v'\in \K (\b)$ and $\n_1,\n_2 \in \D_{m-1}^{\perp}$ such that $\u = \u'+\n_1$ and $\v = \v' + \n_2$. Meanwhile, since $\a,\b \in \dom (\G) \subseteq \D_{m-1}$, then $\a = (a,\dots, a)$ and $\b = (b,\dots, b)$ for some $a,b\in \H$. Hence, if $\u' =(u'_1,\dots, u'_{m-1})$  and $\v' =(v'_1,\dots, v'_{m-1})$, then $u'_i\in \lambda_i A_m(a)$ and $v'_i 
\in \lambda_i A_m(b)$ for all $i$.   By the $\sigma_m$-monotonicity of $A_m$, it follows that  
    \begin{align*}
        \inner{\u-\v}{\a-\b} & \textstyle  = \inner{\u'-\v '}{\a - \b} + \inner{\n_1 - \n_2}{\a - \b}  = \sum_{i=1}^{m-1}\inner{u'_i - v'_i}{a-b} \\
        & \textstyle \geq \sum_{i=1}^{m-1} \sigma_m \lambda_i \norm{a-b}^2  = \sigma_m \normlambda{\a-\b}^2,
    \end{align*}
where the second equality holds by definition of orthogonal complement. Together with \eqref{eq:innerlambda}, we get
    \begin{equation}
       \innerlambda{\x - \y}{\a - \b} \geq (1+\lambda \sigma_m) \normlambda{\a - \b}^2. 
       \label{eq:JG_cocoercive}
    \end{equation}
Combining this with the identity \eqref{eq:identity_normlambda} proves part (ii). 
\smartqedmark \end{proof}



\begin{remark}
\label{rem:cocoercive_and_singlevalued}
    We have \memosolved{AT: The RHS would be $\inner{\x-\y}{\a-\b}_{\bfLambda}$ below?}\jhsolved{Do you mean LHS? 
    } 
    \ifdefined\submit 
    $\inner{\x-\y}{\a-\b}_{\bfLambda}\leq \lambda_{\max} \norm{\x-\y} \norm{\a-\b}$
    \else
    $\inner{\x-\y}{\a-\b}_{\bfLambda}\leq \norm{\x-\y}_{\bfLambda} \norm{\a-\b}_{\bfLambda}\leq \lambda_{\max} \norm{\x-\y} \norm{\a-\b}$ 
    \fi 
    by the Cauchy-Schwarz inequality, where $\ds \lambda_{\max} \coloneqq \max_{i=1,\dots,m-1}\lambda_i$. Thus, we have from \eqref{eq:JF-cocoercive} that 
        \begin{equation}
          \textstyle   \norm{\a - \b} \leq \frac{ \lambda_{\max} }{ \min_{i=1,\dots,m-1} (\lambda_i+\lambda \sigma_i)}\norm{\x-\y} \quad \forall (\x,\a), (\y,\b) \in \gra (\JLambda{\F}),
            \label{eq:JF_cocoercive_simplified}
        \end{equation}
    provided that $\lambda_i+\lambda\sigma_i>0$ for all $i=1,\dots,m-1$. Hence, $\JLambda{\F}$ is single-valued on its domain whenever the latter condition holds. On the other hand, we have from \eqref{eq:JG_cocoercive} that 
        \begin{equation}
           \textstyle  \normlambda{\a-\b}\leq \frac{1}{(1+\lambda\sigma_m) }\normlambda{\x-\y} \quad \forall   (\x,\a), (\y,\b) \in \gra (\JLambda{\G})
            \label{eq:JG_cocoercive_simplified}
        \end{equation}
    provided $1+\lambda\sigma_m>0$,  in which case, $\JLambda{\G}$ is single-valued on its domain. 
\end{remark}

 \subsection{Convergence results}
 First, we present the following proposition, which is a  straightforward application of the existing convergence results for the Douglas-Rachford algorithm for two-operator inclusion.

\begin{proposition}
    \label{prop:naive_douglas}
     Let $A_i:\H\toset \H$ be maximal $\sigma_i$-monotone for each $i=1,\dots,m$, and assume that $\zer\left( A_1+\cdots + A_m\right)\neq \emptyset$. Let $(\mu,\gamma)$ in \eqref{eq:dr_map} satisfy $\mu \in (0,2)$, $\gamma\in (0,+\infty)$, and suppose that either one of the following holds:
     \begin{enumerate}[(A)]
        \item $\ds \sigmahat + \frac{\sigma_m}{m-1}>0$ and $1+\gamma \frac{\sigmahat\sigma_m}{\sigmahat (m-1)+ \sigma_m}>\frac{\mu}{2}$; or 
         \item $\ds \sigmahat= \sigma_m =0$
     \end{enumerate}
     where $\ds \sigmahat \coloneqq \min_{i=1,\dots,m-1}\sigma_i $. If $\{ \x^k\}$ is a sequence generated by \eqref{eq:DR_classic} from an arbitrary initial point $\x^0\in \H^{m-1}$, then there exists $\bar{\x}\in \Fix (\dr{\F}{\G})$ such that $\x^k\toweak \bar{\x}$ and $J_{\gamma 
     \F}(\bar{\x}) \in \bfDelta_{m-1} \left( \zer \left( \sum_{i=1}^m A_i\right)\right)$ with $\norm{(\bfId - \dr{\F}{\G})\x^k}  = o(1/\sqrt{k})$ as $k\to\infty$. Under the conditions in (A),  $J_{\gamma 
     \F}(\x^k)\to J_{\gamma 
     \F}(\bar{\x})$, $J_{\gamma 
     \G}R_{\gamma 
     \F}(\x^k) \to J_{\gamma 
     \F}(\bar{\x})$, and $\bfDelta_{m-1} \left( \zer \left( \sum_{i=1}^m A_i\right)\right) = \{ J_{\gamma 
     \F}(\bar{\x}) \}$. 
\end{proposition}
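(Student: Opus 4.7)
The plan is to recognize the proposition as essentially a specialization of the classical/generalized Douglas--Rachford convergence theorems applied to the two-operator inclusion \eqref{eq:FG_reformulation_new}, so the bulk of the work is verifying hypotheses and translating the conclusions back to the original $m$-operator problem.

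First, I would specialize our parameter choice by setting $\lambda_i = 1/(m-1)$ for all $i$ and $\lambda = \gamma/(m-1)$, so that $\bfLambda = (1/(m-1))\bfId$ and $\JLambda{\F} = J_{\gamma \F}$, $\JLambda{\G} = J_{\gamma \G}$. The iteration \eqref{eq:DR_classic} then coincides exactly with the standard relaxed Douglas--Rachford scheme on $\F+\G$ with step size $\gamma$ and relaxation $\mu$, and $\innerlambda{\cdot}{\cdot}$ is proportional to $\inner{\cdot}{\cdot}$, so weak/strong convergence in the two inner products are equivalent.

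Next, I would invoke \cref{prop:F_monotone} and \cref{prop:G_monotone3} to deduce that $\F$ is maximal $\sigmahat$-monotone and $\G$ is maximal $(\sigma_m/(m-1))$-monotone (the latter requires no interior assumption precisely because the weights are equal). Under (A), the hypothesis $\sigmahat + \sigma_m/(m-1) > 0$ and the step-size inequality are exactly the standard DR hypotheses of \cite{DaoPhan2019,Giselsson2021} when specialized to $\sigma_\F = \sigmahat$ and $\sigma_\G = \sigma_m/(m-1)$, since
\[
  \frac{\sigma_\F\sigma_\G}{\sigma_\F+\sigma_\G}
  \;=\;\frac{\sigmahat\,\sigma_m}{\sigmahat(m-1)+\sigma_m}.
\]
Applying that theorem yields weak convergence of $\{\x^k\}$ to some $\bar\x \in \Fix(\dr{\F}{\G})$, the asymptotic rate $\|(\bfId - \dr{\F}{\G})\x^k\| = o(1/\sqrt{k})$, and strong convergence of the shadow sequences $J_{\gamma \F}(\x^k)$ and $J_{\gamma \G} R_{\gamma \F}(\x^k)$ to $J_{\gamma \F}(\bar\x)$. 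Under (B), both $\F$ and $\G$ are plain maximal monotone, so the same weak-convergence conclusion and $o(1/\sqrt{k})$ rate follow from the classical Lions--Mercier theorem (e.g.\ \cite[Theorem~26.11]{Bauschke2017}); no strong convergence of the shadow sequences is asserted in this case, which is consistent with the statement.

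Finally, to identify the limit of the shadow sequence with a solution of \eqref{eq:inclusion}, I would apply \cref{lemma:JFix=zeros}: since $\JLambda{\F}$ is single-valued whenever $1 + \lambda \sigma_i/(1/(m-1)) > 0$ for all $i$ (\cref{rem:cocoercive_and_singlevalued}), which holds here because the step-size condition in (A) in particular forces $1+\gamma \sigmahat > 0$ (and trivially under (B)), we get $J_{\gamma \F}(\bar\x) \in \bfDelta_{m-1}(\zer(\sum_i A_i))$. For the final uniqueness claim under (A), note that $\F+\G$ is strongly monotone with modulus $\sigmahat + \sigma_m/(m-1) > 0$, hence $\zer(\F+\G)$ contains at most one element; combined with the assumption $\zer(\sum_i A_i)\neq\emptyset$ and \cref{thm:campoy_new}, this yields $\bfDelta_{m-1}(\zer(\sum_i A_i)) = \{J_{\gamma \F}(\bar\x)\}$.

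The main obstacle I anticipate is lining up the step-size parameterization and the relaxation parameter $\mu$ with those in \cite{DaoPhan2019,Giselsson2021}, since different papers use different normalizations (reflected resolvent form vs.\ relaxed form, step size absorbed into operator vs.\ separated), and verifying that the single-valuedness of $\JLambda{\F}$ and $\JLambda{\G}$ holds throughout the iteration so that the algorithm is well-defined; the rest is essentially bookkeeping.
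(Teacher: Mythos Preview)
Your proposal is correct and matches the paper's approach exactly: the paper's proof simply notes that \cref{prop:F_monotone} and \cref{prop:G_monotone3} give $\F$ maximal $\sigmahat$-monotone and $\G$ maximal $(\sigma_m/(m-1))$-monotone, then invokes \cite[Theorem~4.5(ii)]{DaoPhan2019} directly. Your additional remarks on single-valuedness, the Lions--Mercier case (B), and the uniqueness argument via strong monotonicity are all consistent with what that cited theorem already packages, so the only ``obstacle'' you anticipate (parameter normalization) is indeed just bookkeeping.
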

\begin{proof}
    From \cref{prop:F_monotone} and \cref{prop:G_monotone3}, we know that $\F$ is maximal $\sigmahat$-monotone and $\G$ is maximal $\frac{\sigma_m}{m-1}$-monotone. The result then immediately follows from \cite[Theorem 4.5(ii)]{DaoPhan2019}.
\smartqedmark \end{proof}

As indicated in its proof, \cref{prop:naive_douglas} is a direct application of \cite[Theorem 4.5(ii)]{DaoPhan2019}, which provides the convergence of the  Douglas-Rachford algorithm (with equal weights $\lambda_1,\dots,\lambda_{m-1}$) for finding the zeros of the sum of two operators. It is worth noting that \cref{prop:naive_douglas}(A) also covers the situation where one operator among $A_1,\ldots,A_{m-1}$ is only $\sigma_i$–weakly monotone. In that case the condition enforces $
\sigma_m >-(m-1)\sigma_i,$
i.e., the strong monotonicity modulus required of $A_m$ grows linearly with $m$. For large $m$, this demands an impractically large modulus.


Our contribution strengthens \cref{prop:naive_douglas} in three directions (see also \cref{rem:comparewithnaive}):
(i) we relax the moduli requirement from
$
\min_{i=1,\dots,m-1}\sigma_i+\tfrac{\sigma_m}{m-1}>0$
to the significantly weaker condition $
\sigma_1+\cdots+\sigma_m>0;$
(ii) we obtain a strictly larger admissible stepsize window ensuring convergence; and
(iii) we establish convergence for arbitrary weights $\lambda_1,\dots,\lambda_{m-1}$ (not just the uniform choice).
Our analysis adapts and extends the techniques of \cite[Thms.~4.2 and 4.5(ii)]{DaoPhan2019}, relying on Fejér monotonicity, a standard tool in convergence proofs for algorithms with monotone operators.

\ifdefined\submit 
\else 
In the following proposition, we use the identities \eqref{eq:identity_squarednorm} and \eqref{eq:identity_squarednorm2}. Note that these hold on the Hilbert space $\H^{m-1}$ endowed with the inner product $\innerlambda{\cdot}{\cdot}$ defined by \eqref{eq:newinnerproduct} with the induced norm $\normlambda{\cdot}$. 
\fi 
In the remainder of this paper, we also introduce the following notations: Given $\sigma_1,\dots,\sigma_{m-1}\in \Re$, we let 
\begin{equation}
    \begin{array}{rl}
        \I~ & \coloneqq \{ i\in \{1,\dots,m-1\} : \sigma_i \neq 0\},  \\ 
        \I^- & \coloneqq \{ i\in \I : \sigma_i < 0\} \quad \text{and} \quad \I^+ \coloneqq \I \setminus \I^-.
    \end{array}\label{eq:I_index}
\end{equation}

\begin{proposition}
\label{prop:T_nonexpansive}
Let $A_i:\H\to \H$ be $\sigma_i$-monotone for each $i=1,\dots,m$ with $\dom (J_{A_m})=\H$, let $\lambda,\lambda_1,\dots, \lambda_{m-1}\in (0,+\infty)$  with $\sum_{i=1}^{m-1}\lambda_i = 1$ and let $\bfLambda$ be given by \eqref{eq:Lambda}. Suppose that $\JLambda{\F}$ and $\JLambda{\G}$ are single-valued on their domains. Define $U:\H^{m-1}\toset \H$ by 
\ifdefined\submit 
$U(\x) \coloneqq  J_{\lambda A_m}\left( \sum_{i=1}^{m-1}\lambda_i R_{\frac{\lambda}{\lambda_i}A_i}(x_i)\right). $
\else 
    \[ U(\x) \coloneqq  J_{\lambda A_m}\left( \sum_{i=1}^{m-1}\lambda_i R_{\frac{\lambda}{\lambda_i}A_i}(x_i)\right). \]
\fi 
Then the following hold:
\begin{enumerate}[(i)]
    \item The mappings $U$ and $\JLambda{\G}\RLambda{\F}$  are single-valued on $\dom (\dr{\F}{\G})$, and \\ $ \JLambda{\G}\RLambda{\F}(\x) = \bfDelta_{m-1}(U(\x))$. 
    \item Denote $\R \coloneqq  \bfId - \dr{\F}{\G}$ and its components $\R = (R_1,\dots,R_{m-1})$. Then 
        \begin{equation}
        \frac{1}{\mu} R_i(\x) =  J_{\frac{\lambda}{\lambda_i}A_i}(x_i) - U(\x) 
        \label{eq:Id-T}
    \end{equation} 
    for each $i=1,\dots,m-1$. 
    \item Let $(\delta_i)_{i\in \I}$ be such that $\sigma_i + \sigma_m \delta_i \neq 0 $ for any $i\in \I$ and $\sum_{i\in \I} \delta_i= 1$. Then for any $\x,\y \in \dom (\dr{\F}{\G})$,
    \begin{align}
    & \textstyle \normlambda{\dr{\F}{\G} (\x) - \dr{\F}{\G} (\y) }^2  \leq  \normlambda{\x - \y}^2     -\frac{2}{\mu}\sum_{i=1}^{m-1} \lambda_i \kappa_i \norm{R_i(\x) - R_i(\y)}^2 \notag \\ 
    & \textstyle - 2\mu\lambda \sum_{i\in \I} \theta_i \norm{\sigma_i\left(  J_{\frac{\lambda}{\lambda_i}A_i} (x_i) -  J_{\frac{\lambda}{\lambda_i}A_i} (y_i)\right) + \sigma_m \delta_i (U(\x)-U(\y)) }^2 \notag  \\
    & \textstyle - 2\alpha \mu \lambda \sigma_m  \norm{ U(\x)-U(\y)}^2 ,\label{eq:T_nonexpansive}
    \end{align}
    where 
    \begin{equation}
        \alpha \coloneqq \begin{cases}
        0 & \text{if}~\I \neq \emptyset \\
        1 & \text{if}~\I = \emptyset
    \end{cases}, ~ \kappa_i \coloneqq \begin{cases}
            1+ \frac{\lambda}{\lambda_i}\frac{ \sigma_i\sigma_m\delta_i }{\sigma_i + \sigma_m\delta_i} - \frac{\mu}{2} & \text{if}~i\in \I \\
            1-\frac{\mu}{2} & \text{if}~i\notin \I
        \end{cases},~\theta_i \coloneqq \frac{1}{\sigma_i +\sigma_m \delta_i}. \label{eq:alpha,kappa,theta}
    \end{equation}
\end{enumerate}
\end{proposition}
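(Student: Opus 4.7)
My plan is to handle the three parts in order, with the final estimate in (iii) being the main technical step.

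For part (i), I would invoke \cref{prop:G_resolvent} with $\bar{\lambda}=\sum_{i=1}^{m-1}\lambda_i=1$, giving $\JLambda{\G}(\z)=\bfDelta_{m-1}(J_{\lambda A_m}(\sum_{i=1}^{m-1}\lambda_i z_i))$, and combine it with \cref{prop:F_resolvent}, which says that the $i$-th component of $\RLambda{\F}(\x)=2\JLambda{\F}(\x)-\x$ is $R_{\frac{\lambda}{\lambda_i}A_i}(x_i)$. Setting $\z=\RLambda{\F}(\x)$ immediately yields $\JLambda{\G}\RLambda{\F}(\x)=\bfDelta_{m-1}(U(\x))$, and single-valuedness of $U$ is inherited from that of $\JLambda{\F}$ and $\JLambda{\G}$.

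For part (ii), note from \eqref{eq:T_alternative} that $\R=\bfId-\dr{\F}{\G}=\tfrac{\mu}{2}(\bfId-\RLambda{\G}\RLambda{\F})$. Writing $\RLambda{\G}\RLambda{\F}(\x)=2\bfDelta_{m-1}(U(\x))-\RLambda{\F}(\x)$ and reading off the $i$-th coordinate, I obtain $\tfrac{1}{\mu}R_i(\x)=J_{\frac{\lambda}{\lambda_i}A_i}(x_i)-U(\x)$, which also gives the useful identity
\begin{equation*}
R_i(\x)-R_i(\y)=\mu\bigl[(a_i-b_i)-(p-q)\bigr],
\end{equation*}
where $a_i=J_{\frac{\lambda}{\lambda_i}A_i}(x_i)$, $b_i=J_{\frac{\lambda}{\lambda_i}A_i}(y_i)$, $p=U(\x)$, $q=U(\y)$.

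For part (iii), I would begin from \eqref{eq:T_alternative} and apply the identity \eqref{eq:identity_squarednorm} with $\alpha=(2-\mu)/2$, $\beta=\mu/2$ (so $\alpha+\beta=1$) to $\dr{\F}{\G}(\x)-\dr{\F}{\G}(\y)$. This produces three terms, to which I would then apply \cref{prop:properties_reflected_resolvents}(ii) and (i) in succession to bound $\normlambda{\RLambda{\G}\RLambda{\F}(\x)-\RLambda{\G}\RLambda{\F}(\y)}^2$ from above by $\normlambda{\x-\y}^2-4\lambda\sum_{i}\sigma_i\norm{a_i-b_i}^2-4\lambda\sigma_m\norm{p-q}^2$, using that $\normlambda{\bfDelta_{m-1}(p)-\bfDelta_{m-1}(q)}^2=\norm{p-q}^2$ since $\sum_i\lambda_i=1$. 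Then, using $\bfId-\RLambda{\G}\RLambda{\F}=\tfrac{2}{\mu}\R$, the cross term becomes $-\tfrac{2-\mu}{\mu}\sum_i\lambda_i\norm{R_i(\x)-R_i(\y)}^2$, yielding an intermediate inequality of the form
\begin{equation*}
\normlambda{\dr{\F}{\G}(\x)-\dr{\F}{\G}(\y)}^2\leq\normlambda{\x-\y}^2-2\mu\lambda\sum_{i=1}^{m-1}\sigma_i\norm{a_i-b_i}^2-2\mu\lambda\sigma_m\norm{p-q}^2-\tfrac{2-\mu}{\mu}\sum_{i=1}^{m-1}\lambda_i\norm{R_i(\x)-R_i(\y)}^2.
\end{equation*}

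The main obstacle is the final algebraic reshuffling needed to match the target form. When $\I\neq\emptyset$, I would use $\sigma_i=0$ for $i\notin\I$ and $\sum_{i\in\I}\delta_i=1$ to rewrite $\sigma_m\norm{p-q}^2=\sigma_m\sum_{i\in\I}\delta_i\norm{p-q}^2$, so the monotonicity contributions group as $\sum_{i\in\I}\bigl[\sigma_i\norm{a_i-b_i}^2+\sigma_m\delta_i\norm{p-q}^2\bigr]$. For each $i\in\I$, the identity \eqref{eq:identity_squarednorm2} with $\alpha=\sigma_i$, $\beta=\sigma_m\delta_i$, $x=a_i-b_i$, $y=p-q$ (valid since $\sigma_i+\sigma_m\delta_i\neq 0$) splits this into a term proportional to $\norm{(a_i-b_i)-(p-q)}^2=\mu^{-2}\norm{R_i(\x)-R_i(\y)}^2$ and the $\theta_i$-weighted term appearing in \eqref{eq:T_nonexpansive}. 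Combining the $\norm{R_i(\x)-R_i(\y)}^2$ coefficients with the $-\tfrac{2-\mu}{\mu}\lambda_i$ already present gives exactly $-\tfrac{2\lambda_i\kappa_i}{\mu}$ (after factoring $\tfrac{2\lambda_i}{\mu}$), while for $i\notin\I$ the coefficient is $-\tfrac{2\lambda_i}{\mu}(1-\mu/2)=-\tfrac{2\lambda_i\kappa_i}{\mu}$, matching the claimed formula. The case $\I=\emptyset$ is a direct simplification: all $\sigma_i=0$ vanish, the intermediate inequality retains only the $-2\mu\lambda\sigma_m\norm{p-q}^2$ and cross terms, and $\alpha=1$ absorbs the $\sigma_m$-term as required.
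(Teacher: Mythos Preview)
Your proposal is correct and follows essentially the same route as the paper: the same use of \eqref{eq:T_alternative} with identity \eqref{eq:identity_squarednorm}, the same application of \cref{prop:properties_reflected_resolvents}(ii) then (i) to bound the reflected-resolvent term, and the same splitting via \eqref{eq:identity_squarednorm2} with $\alpha=\sigma_i$, $\beta=\sigma_m\delta_i$ to regroup the monotonicity contributions into the $\kappa_i$- and $\theta_i$-terms. The only cosmetic difference is that for part (ii) the paper uses the equivalent expression $\bfId-\dr{\F}{\G}=\mu(\JLambda{\F}-\JLambda{\G}\RLambda{\F})$ directly rather than going through $\tfrac{\mu}{2}(\bfId-\RLambda{\G}\RLambda{\F})$.
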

\begin{proof}
We have $\dr{\F}{\G} = \bfId +\mu ( \JLambda{\G}\RLambda{\F} - \JLambda{\F})$ by noting \eqref{eq:dr_map} and the single-valuedness hypotheses. Then $\dom (\dr{\F}{\G} )= \dom (\JLambda{\G}\RLambda{\F})$ and $\JLambda{\G}\RLambda{\F}$ is single-valued on $\dom (\dr{\F}{\G} )$. The formula $ \JLambda{\G}\RLambda{\F}(\x) = \bfDelta_{m-1}(U(\x))$ holds by  \cref{prop:F_resolvent} and \cref{prop:G_resolvent}. From this formula, we also see that $U$ is single-valued on $\dom (\dr{\F}{\G})$. This proves part (i). Part (ii) follows from part (i) and the identity 
\begin{equation}
     \bfId - \dr{\F}{\G}= \mu ( \JLambda{\F} - \JLambda{\G}\RLambda{\F}).
     \label{eq:Id-T=mu(J-J)}
\end{equation}

\memosolved{AT: How about adding something like, "To prove part (iii)" here?} We now prove part (iii). Using \eqref{eq:identity_squarednorm} and the equivalent expression for $\dr{\F}{\G}$ given in \eqref{eq:T_alternative}, we have 
    \begin{align}
       \textstyle  \normlambda{\dr{\F}{\G} (\x) - \dr{\F}{\G} (\y) }^2 =  \textstyle & \frac{2-\mu}{2}\normlambda{\x - \y}^2 + \frac{\mu}{2}\normlambda{\RLambda{\G}\RLambda{\F}(\x) - \RLambda{\G}\RLambda{\F}(\y)}^2 \notag \\
        & \textstyle  - \frac{\mu(2-\mu)}{4}\normlambda{(\bfId - \RLambda{\G}\RLambda{\F})(\x) - (\bfId - \RLambda{\G}\RLambda{\F})(\y)}^2 .\label{eq:T_diff}
    \end{align}
    From \eqref{eq:T_alternative}, we also obtain that $\bfId - \RLambda{\G}\RLambda{\F} = \frac{2}{\mu}(\bfId - \dr{\F}{\G}) = \frac{2}{\mu}\R$. Then, we further obtain from \eqref{eq:T_diff} that 
    \begin{align}
       \textstyle  \normlambda{\dr{\F}{\G} (\x) - \dr{\F}{\G} (\y) }^2 = & \textstyle \frac{2-\mu}{2}\normlambda{\x - \y}^2 + \frac{\mu}{2}\normlambda{\RLambda{\G}\RLambda{\F}(\x) - \RLambda{\G}\RLambda{\F}(\y)}^2 \notag \\
        & \textstyle - \frac{2-\mu}{\mu} \sum_{i=1}^{m-1}\lambda_i\norm{R_i(\x) - R_i(\y)}^2 .\label{eq:T_diff2}
    \end{align}
    Meanwhile, noting the single-valuedness of $\JLambda{\F}$ and $\JLambda{\G}$, we have
        \begin{align}
            & \normlambda{\RLambda{\G}\RLambda{\F}(\x) - \RLambda{\G}\RLambda{\F}(\y)}^2 \notag \\ 
            & \leq  \normlambda{\RLambda{\F}(\x) -\RLambda{\F}(\y)}^2 - 4\lambda \sigma_m \normlambda{\JLambda{\G}\RLambda{\F}(\x) - \JLambda{\G}\RLambda{\F}(\y)  }^2 \notag  \\
            & \textstyle \leq  \normlambda{\x - \y}^2 - 4\lambda \sum_{i=1}^{m-1}\sigma_i \norm{J_{\frac{\lambda}{\lambda_i} A_i}(x_i) - J_{\frac{\lambda}{\lambda_i} A_i}(y_i)}^2 \notag \\
            & - 4\lambda \sigma_m \normlambda{\JLambda{\G}\RLambda{\F}(\x) - \JLambda{\G}\RLambda{\F}(\y)  }^2 ,\label{eq:composition_diff}
        \end{align}
    where the first inequality holds by \cref{prop:properties_reflected_resolvents}(ii), while the second holds by combining \cref{prop:properties_reflected_resolvents}(i) and \cref{prop:F_resolvent}. When $\I=\emptyset$, then $\sigma_i=0$ for all $i=1,\dots,m-1$ and we immediately obtain the inequality \eqref{eq:T_nonexpansive} by combining \eqref{eq:T_diff2} and \eqref{eq:composition_diff}. \ifdefined\submit On the other hand, when \else When \fi $\I\neq \emptyset$, we have
\begin{align}
  &\textstyle  \sum_{i=1}^{m-1} \sigma_i \norm{J_{\frac{\lambda}{\lambda_i}A_i} (x_i) -  J_{\frac{\lambda}{\lambda_i}A_i} (y_i)}^2 + \sigma_m \normlambda{\JLambda{\G}\RLambda{\F}(\x) - \JLambda{\G}\RLambda{\F}(\y)}^2 \notag \\
 &\textstyle  = \sum_{i\in \I} \sigma_i \norm{J_{\frac{\lambda}{\lambda_i}A_i} (x_i) -  J_{\frac{\lambda}{\lambda_i}A_i} (y_i)}^2 + \sigma_m \normlambda{\JLambda{\G}\RLambda{\F}(\x) - \JLambda{\G}\RLambda{\F}(\y)}^2 \notag \\
& \textstyle \overset{(a)}{=}  \sum_{i\in \I}  \sigma_i \norm{J_{\frac{\lambda}{\lambda_i}A_i} (x_i) -  J_{\frac{\lambda}{\lambda_i}A_i} (y_i)}^2 + \sigma_m  \norm{U(\x) - U(\y)}^2 \notag \\
&\textstyle  \overset{(b)}{=} \sum_{i\in \I}   \left( \sigma_i \norm{J_{\frac{\lambda}{\lambda_i}A_i} (x_i) -  J_{\frac{\lambda}{\lambda_i}A_i} (y_i)}^2 + \sigma_m  \delta_i\norm{U(\x) - U(\y)}^2\right) \notag \\
&\textstyle  \overset{(c)}{=} \sum_{i\in \I}  \frac{\sigma_i \sigma_m \delta_i}{\sigma_i +\sigma_m \delta_i} \norm{\left(  J_{\frac{\lambda}{\lambda_i}A_i} (x_i) -  J_{\frac{\lambda}{\lambda_i}A_i} (y_i)\right) - (U(\x)-U(\y)) }^2 \notag \\
&\textstyle \quad + \sum_{i\in \I}  \frac{1}{\sigma_i +\sigma_m \delta_i} \norm{\sigma_i\left(  J_{\frac{\lambda}{\lambda_i}A_i} (x_i) -  J_{\frac{\lambda}{\lambda_i}A_i} (y_i)\right) + \sigma_m \delta_i (U(\x)-U(\y)) }^2 \notag \\
&\textstyle  \overset{(d)}{=} \frac{1}{\mu^2}\sum_{i\in \I} \frac{\sigma_i \sigma_m \delta_i}{\sigma_i +\sigma_m \delta_i} \norm{R_i(\x) - R_i(\y) }^2  \notag \\
&\textstyle  \quad + \sum_{i\in \I}  \frac{1}{\sigma_i +\sigma_m \delta_i} \norm{\sigma_i\left(  J_{\frac{\lambda}{\lambda_i}A_i} (x_i) -  J_{\frac{\lambda}{\lambda_i}A_i} (y_i)\right) + \sigma_m \delta_i (U(\x)-U(\y)) }^2 \label{eq:lasttwoterms} ,
\end{align}
where (a) holds by part (i); (b) holds since $\sum_{i\in \I}\delta_i = 1$; (c) holds by \eqref{eq:identity_squarednorm2}; and (d) holds by part (ii). Combining \eqref{eq:T_diff2}, \eqref{eq:composition_diff} and \eqref{eq:lasttwoterms}, we obtain  \eqref{eq:T_nonexpansive}.
\smartqedmark \end{proof}

\begin{theorem}
    \label{thm:general_convergence}
    Let $A_i:\H\toset \H$ be maximal $\sigma_i$-monotone for each $i=1,\dots,m$, and assume that $\zer\left( A_1+\cdots + A_m\right)\neq \emptyset$. Let $\mu\in (0,2)$, $\lambda_1,\dots, \lambda_{m-1}\in (0,+\infty)$ with $\sum_{i=1}^{m-1}\lambda_i = 1$, and let $\bfLambda$ be given by \eqref{eq:Lambda}. Let $\I$ be given by \eqref{eq:I_index}, and suppose that either one of the following holds:
    \begin{enumerate}[(A)]
        \item  $\I \neq \emptyset$, there exists $(\delta_i)_{i\in \I}$ such that $\sigma_i + \sigma_m \delta_i >0$ for all $i\in \I$ and $\sum_{i\in \I}\delta_i = 1$, and $\lambda\in (0,+\infty)$ is chosen such that $1+\frac{\lambda}{\lambda_i}\frac{\sigma_i\sigma_m\delta_i}{\sigma_i+\sigma_m\delta_i}>\frac{\mu}{2}$ for all $i\in \I$. 
        \item $\I = \emptyset$, $\sigma_m\geq 0$, and $\lambda \in (0,+\infty)$. 
    \end{enumerate}
   If $\{(\x^k, \z^k,\y^k)\}$ is a sequence generated by \eqref{eq:dr_stepbystep} from an arbitrary initial point $\x^0\in \H^{m-1}$, then 
   \begin{enumerate}[(i)]
       \item  $\{\x^k\}$ is bounded, there exists $\bar{\x}\in \Fix (\dr{\F}{\G})$ such that $\x^k\toweak \bar{\x}$, and $\bar{\z} \coloneqq \JLambda{\F}(\bar{\x}) \in \bfDelta_{m-1} \left( \zer \left( \sum_{i=1}^m A_i\right)\right)$; 
       \item $\norm{(\bfId - \dr{\F}{\G})\x^k}  = o(1/\sqrt{k})$ as $k\to \infty$;  and
       \item   $\norm{\y^{k+1} - \y^k}  = o(1/\sqrt{k})$ and $\norm{\z^{k+1} - \z^k}  = o(1/\sqrt{k})$ as $k\to\infty$. In addition, $\{\z^k\}$ and $\{\y^k\}$ are bounded sequences.
   \end{enumerate}
Moreover, 
    \begin{enumerate}
        \item[(iv)] If either (A) holds or (B) holds with $\sigma_m>0$, then $ \z^k \to \bar{\z}$, $ \y^k \to \bar{\z}$, and $ \zer \left( \sum_{i=1}^m A_i\right) = \{ U(\bar{\x})\}$; and 
        \item[(v)] If (B) holds with $\sigma_m=0$, then $\z^k \toweak \bar{\z}$ and $ \y^k \toweak \bar{\z}$. 
    \end{enumerate}
\end{theorem}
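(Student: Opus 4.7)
The engine of the argument is \cref{prop:T_nonexpansive}(iii). Under hypothesis (A), the chosen stepsize ensures $\kappa_i>0$ for every $i\in\I$, while $\kappa_i=1-\mu/2>0$ for $i\notin\I$; also $\alpha=0$ and the coefficients $\theta_i=1/(\sigma_i+\sigma_m\delta_i)$ are positive. Under hypothesis (B), $\I=\emptyset$ gives $\alpha=1$, $\sigma_m\ge 0$ and $\kappa_i=1-\mu/2>0$. In both cases every term subtracted on the right-hand side of \eqref{eq:T_nonexpansive} is nonnegative. Since $\zer(\sum A_i)\neq\emptyset$, \cref{lemma:JFix=zeros} together with \cref{lemma:maximal_sigma_equivalent,lemma:maximal_single-valued-resolvent} (to ensure single-valuedness of $\JLambda{\F}$ and $\JLambda{\G}$ via \cref{rem:cocoercive_and_singlevalued}) furnishes a fixed point $\bar{\x}\in\Fix(\dr{\F}{\G})$. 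Instantiating \eqref{eq:T_nonexpansive} with $\y=\bar{\x}$ (so that $\R(\bar{\x})=\mathbf{0}$, $\JLambda{\F}(\bar{\x})=\bar{\z}$, $U(\bar{\x})=\bar z$, where $\bar{\z}=\bfDelta_{m-1}(\bar z)$) yields the Fejér-type inequality
\begin{equation*}
\normlambda{\x^{k+1}-\bar{\x}}^2 \le \normlambda{\x^k-\bar{\x}}^2 - \tfrac{2}{\mu}\sum_{i=1}^{m-1}\lambda_i\kappa_i\|R_i(\x^k)\|^2 - (\text{extra nonneg.\ terms}),
\end{equation*}
whence $\{\x^k\}$ is bounded and the squared sequences of all subtracted terms are summable.

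For part (ii), I apply \eqref{eq:T_nonexpansive} once more, this time with $\x=\x^k$ and $\y=\x^{k+1}$, which immediately gives $\normlambda{\R(\x^{k+1})}\le \normlambda{\R(\x^k)}$, i.e., the sequence $\{\normlambda{\R(\x^k)}\}$ is nonincreasing. Combining monotonicity with the already established summability $\sum_k\normlambda{\R(\x^k)}^2<\infty$ yields the rate $\normlambda{\R(\x^k)}=o(1/\sqrt{k})$, and norm equivalence between $\normlambda{\cdot}$ and $\norm{\cdot}$ (guaranteed by $\lambda_i>0$) transfers this to the standard norm. Part (i) then follows from the standard Fejér-plus-demiclosedness scheme: nonexpansiveness of $\dr{\F}{\G}$ in the $\bfLambda$-inner product gives the demiclosedness principle for $\bfId-\dr{\F}{\G}$, so every weak cluster point of $\{\x^k\}$ lies in $\Fix(\dr{\F}{\G})$, while Opial's lemma (applied in the $\bfLambda$-norm) promotes subsequential weak convergence to weak convergence to a unique $\bar{\x}$; the identification $\bar{\z}=\JLambda{\F}(\bar{\x})\in\bfDelta_{m-1}(\zer(\sum_i A_i))$ is \cref{lemma:JFix=zeros}. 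Part (iii) follows because \cref{rem:cocoercive_and_singlevalued} makes $\JLambda{\F}$ Lipschitz on $\H^{m-1}$, so $\norm{\z^{k+1}-\z^k}\le L\norm{\x^{k+1}-\x^k}=o(1/\sqrt{k})$; then $\y^k=\bfDelta_{m-1}(U(\x^k))$ with $U(\x^k)=z_i^k-R_i(\x^k)/\mu$ (from \eqref{eq:Id-T}) gives the analogous bound for $\y^k$, and boundedness follows from boundedness of $\{\x^k\}$.

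For the strong-convergence statement (iv) under hypothesis (A), the identity $z_i^k-\bar z = R_i(\x^k)/\mu+(y^k-\bar z)$ lets me rewrite the third subtracted term in \eqref{eq:T_nonexpansive} applied at $\y=\bar{\x}$ as
\begin{equation*}
\sigma_i\bigl(z_i^k-\bar z\bigr)+\sigma_m\delta_i\bigl(y^k-\bar z\bigr)=\tfrac{\sigma_i}{\mu}R_i(\x^k)+(\sigma_i+\sigma_m\delta_i)(y^k-\bar z).
\end{equation*}
Summability of the squared norms of these quantities, together with $\|R_i(\x^k)\|\to 0$ and the positivity $\sigma_i+\sigma_m\delta_i>0$ for some (equivalently, any) $i\in\I$, forces $y^k\to\bar z$ strongly; then $z_i^k\to\bar z$. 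Under (B) with $\sigma_m>0$, $\alpha=1$ and the last term in \eqref{eq:T_nonexpansive} directly yields summability of $\|U(\x^k)-\bar z\|^2$, giving the same strong limits. Uniqueness $\zer(\sum_i A_i)=\{\bar z\}$ follows in both cases because the moduli hypotheses imply $\sigma_1+\cdots+\sigma_m>0$, making $\sum_i A_i$ strongly monotone. Finally, part (v) concerns hypothesis (B) with $\sigma_m=0$, where every $A_i$ is maximal monotone; then $J_{\lambda A_i/\lambda_i}$ and $J_{\lambda A_m}$ are firmly nonexpansive, hence weak-to-weak continuous, so $\z^k=\JLambda{\F}(\x^k)\toweak\bar{\z}$ and $\y^k=\bfDelta_{m-1}(J_{\lambda A_m}(\sum_i\lambda_i R_{\lambda A_i/\lambda_i}(x_i^k)))\toweak\bar{\z}$.

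The delicate step is the strong-convergence argument in (iv) under hypothesis (A): one must carefully decouple the mixed combination $\sigma_i(\cdot)+\sigma_m\delta_i(\cdot)$ to extract strong convergence of the ``scalar'' component $y^k$; the rest of the proof is a systematic bookkeeping of Fejér monotonicity and Lipschitz transfer between the sequences $\x^k$, $\z^k$ and $\y^k$.
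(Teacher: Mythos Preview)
Your approach for parts (i)--(iv) is essentially the same as the paper's: apply \cref{prop:T_nonexpansive}(iii) at a fixed point to obtain Fej\'er monotonicity and summability, then use the standard nonexpansive machinery (demiclosedness, monotonicity of the residual) to extract weak convergence and the $o(1/\sqrt{k})$ rate, and finally decouple the mixed term via \eqref{eq:Id-T} to obtain strong convergence of $y^k$. Your uniqueness argument in (iv) via strong monotonicity of $\sum_i A_i$ is a legitimate shortcut compared with the paper's route through \cref{lemma:JFix=zeros}. One technical omission: you invoke \cref{rem:cocoercive_and_singlevalued} without verifying the hypotheses $\lambda_i+\lambda\sigma_i>0$ and $1+\lambda\sigma_m>0$; under condition (A) these are not immediate and the paper derives them carefully from the stepsize inequality.

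The genuine gap is in part (v). Your claim that firmly nonexpansive resolvents are ``weak-to-weak continuous'' is false in infinite-dimensional Hilbert spaces: even the metric projection onto a closed convex set need not be weakly sequentially continuous. So from $\x^k\toweak\bar{\x}$ you cannot conclude $\JLambda{\F}(\x^k)\toweak\JLambda{\F}(\bar{\x})$ directly. The paper instead exploits the \emph{weak--strong} closedness of maximal monotone graphs (cf.\ \cite[Proposition~20.37(ii)]{Bauschke2017}). Concretely, one rewrites the resolvent relations as
\[
\begin{bmatrix}\z^k-\y^k\\ \tfrac{1}{\lambda}\bfLambda(\z^k-\y^k)\end{bmatrix}\in
\Bigl(\begin{bmatrix}\F^{-1}&0\\0&\G\end{bmatrix}+\begin{bmatrix}\mathbf{0}&-\bfId\\ \bfId&\bfId\end{bmatrix}\Bigr)
\begin{bmatrix}\tfrac{1}{\lambda}\bfLambda(\x^k-\z^k)\\ \y^k\end{bmatrix},
\]
where the right-hand operator is maximal monotone on $\H^{m-1}\times\H^{m-1}$. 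Since the left side converges \emph{strongly} to $\mathbf{0}$ (because $\z^k-\y^k\to\mathbf{0}$) while the argument on the right has bounded, hence weakly precompact, components, passing to a weakly convergent subsequence and using graph closedness identifies every weak cluster point of $(\z^k,\y^k)$ with $(\JLambda{\F}(\bar{\x}),\JLambda{\G}(2\bar{\z}-\bar{\x}))=(\bar{\z},\bar{\z})$. That is the missing ingredient in your argument.
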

\begin{proof}
    We first check that the single-valuedness assumptions of \cref{prop:T_nonexpansive} are met. 
    If condition (A) holds, note that for any $i\in \I$,
    \begin{equation*}
       \textstyle  1+\frac{\lambda}{\lambda_i} \sigma_i = \left( 1+\frac{\lambda}{\lambda_i}\frac{\sigma_i\sigma_m\delta_i}{\sigma_i+\sigma_m\delta_i} \right) + \frac{\lambda\sigma_i}{\lambda_i}\left( 1- \frac{\sigma_m\delta_i}{\sigma_i+\sigma_m\delta_i}\right)  >   \frac{\mu}{2}+ \frac{\lambda\sigma_i^2}{\lambda_i(\sigma_i+\sigma_m\delta_i)}>0.
        \label{eq:F_singlevalued_check}
    \end{equation*}
On the other hand, under condition (B), it is clear that $1+\frac{\lambda}{\lambda_i} \sigma_i>0$ for all $i=1,\dots,m-1$. By \cref{rem:cocoercive_and_singlevalued}, we see that $\JLambda{\F}$ is single-valued with domain $\H^{m-1}$. To prove that $\JLambda{\G}$ is likewise single-valued with domain $\H^{m-1}$, it is enough to show by \cref{rem:cocoercive_and_singlevalued} that $1+\lambda\sigma_m >0$. This is clearly true under condition (B). If (A) holds, note that since $\lambda_i + \frac{\lambda\sigma_i\sigma_m\delta_i}{\sigma_i+\sigma_m\delta_i} > \frac{\lambda_i \mu}{2}$ for each $i\in \I$, then $1+ \lambda \sigma_m \sum_{i\in \I} \frac{\sigma_i\delta_i}{\sigma_i+\sigma_m\delta_i} >\frac{\mu}{2}$ by taking the sum for $i=1$ to $i=m-1$. Thus,
    \begin{align*}
        1+\lambda\sigma_m & \textstyle  = \left( 1 + \lambda \sigma_m \sum_{i\in \I} \frac{\sigma_i\delta_i}{\sigma_i+\sigma_m\delta_i} \right) +\lambda\sigma_m \left( 1- \sum_{i\in \I} \frac{\sigma_i\delta_i}{\sigma_i+\sigma_m\delta_i}\right) \\
        & \textstyle  > \frac{\mu}{2 }+ \lambda \sigma_m \left( \sum_{i\in \I} \left( \delta_i - \frac{\sigma_i\delta_i}{\sigma_i+\sigma_m\delta_i}\right) \right)  = \frac{\mu}{2} + \lambda\sigma_m^2 \sum_{i\in \I} \frac{\delta_i^2}{\sigma_i+\sigma_m\delta_i}>0.
    \end{align*}
    Hence, we may now use \cref{prop:T_nonexpansive}. Set $\x = \x^k$ and let $\y \in \Fix (\dr{\F}{\G})$. Noting that $\x^{k+1} = \dr{\F}{\G}(\x^k)$, $\y = \dr{\F}{\G}(\y)$ and $\R(\y) = \bf{0}$, we obtain from \eqref{eq:T_nonexpansive} that 
     \begin{align}
    \normlambda{\x^{k+1} - \y }^2  & \textstyle 
 \leq  \normlambda{\x^k - \y}^2     -\frac{2}{\mu}\sum_{i=1}^{m-1} \lambda_i \kappa_i \norm{R_i(\x^k)}^2 \notag \\ 
    & \textstyle  - 2\mu\lambda \sum_{i\in \I} \theta_i \norm{\sigma_i\left(  J_{\frac{\lambda}{\lambda_i}A_i} (x_i^k) -  J_{\frac{\lambda}{\lambda_i}A_i} (y_i)\right) + \sigma_m \delta_i (U(\x^k)-U(\y)) }^2 \notag \\
    & - 2\alpha \mu \lambda \sigma_m \norm{U(\x^k)-U(\y)}^2.\label{eq:T_nonexpansive_case1}
    \end{align}
For $\kappa_i$, $\theta_i$ and $\alpha$ defined in \eqref{eq:alpha,kappa,theta}, \memosolved{AT: Better to give equation-numbers to the definitions of $\kappa_i,\theta_i, \alpha$ in \cref{prop:T_nonexpansive} (iii) and cite them here.} we have $\kappa_i,\theta_i>0$ and $\alpha =0$ under condition (A), while $\kappa_i>0$, $\sigma_m\geq 0$ and $\alpha=1$ under condition (B). Then, we conclude that $\{\x^k\}$ is Fej\'{e}r monotone with respect to $\Fix (\dr{\F}{\G})$ and is bounded. By telescoping \eqref{eq:T_nonexpansive_case1}, 
\begin{align}
    & \textstyle  \frac{2}{\mu}\sum_{i=1}^{m-1}\sum_{k=0}^{\infty} \lambda_i \kappa_i \norm{R_i(\x^k)}^2  \notag \\
    &\textstyle  \quad + 2\mu\lambda \sum_{i\in \I} \sum_{k=0}^{\infty} \theta_i \norm{\sigma_i\left(  J_{\frac{\lambda}{\lambda_i}A_i} (x_i^k) -  J_{\frac{\lambda}{\lambda_i}A_i} (y_i)\right) + \sigma_m \delta_i (U(\x^k)-U(\y)) }^2 \notag \\ 
    & \textstyle  \quad + 2\alpha \mu\lambda \sigma_m  \sum_{k=0}^{\infty} \norm{U(\x^k)-U(\y) }^2 \leq \normlambda{\x^0 - \y}^2 <\infty, \qquad \forall \y\in \Fix(\dr{\F}{\G}).\label{eq:finite_sum} 
\end{align}  
Since $\lambda_i,\kappa_i>0$ for all $i=1,\dots,m-1$, then $R_i(\x^k) \to 0$ for all $i=1,\dots,m-1$, and so $( \bfId - \dr{\F}{\G} )\x^k = \R (\x^k)  \to \bf{0}$ as $k\to \infty$. Following the arguments in \cite[Theorem 4.2]{DaoPhan2019}, we see that $\{ \x^k\}$ converges weakly to a point $\bar{\x}\in \Fix (\dr{\F}{\G})$, and by \cref{lemma:JFix=zeros}, $\bar{\z}\coloneqq \JLambda{\F}(\bar{\x}) \in \bfDelta_{m-1} \left( \zer \left( \sum_{i=1}^m A_i\right)\right)$.

The rate $\norm{(\bfId - \dr{\F}{\G})\x^k}  = o(1/\sqrt{k})$ can be immediately derived from the nonexpansiveness of $\dr{\F}{\G}$ (by \eqref{eq:T_nonexpansive}) and the finiteness of $\sum_{k=0}^{\infty}\norm{R_i(\x^k)}^2$ by \eqref{eq:finite_sum}; see also \cite[Theorem 4.2(ii)]{DaoPhan2019}. This proves part (ii). 

From (ii), we use \eqref{eq:JF_cocoercive_simplified} and \eqref{eq:zstep} to conclude that $\norm{\z^{k+1} - \z^k}  = o(1/\sqrt{k})$. These together with \eqref{eq:JG_cocoercive_simplified} and \eqref{eq:ystep} yield $\norm{\y^{k+1} - \y^k}  = o(1/\sqrt{k})$. To complete the proof of part (iii), we have $\norm{\z^k-\bar{\z}}\leq \frac{\lambda_{\max}}{\min _{i=1,\dots,m-1}(\lambda_i + \lambda\sigma_i)}\norm{\x^k-\bar{\x}}$ by \eqref{eq:JF_cocoercive_simplified}. Since $\{\x^k\}$ is bounded, then $\{\z^k\}$ is likewise bounded. Furthermore, since $\y^k-\z^k\to \mathbf{0}$ by using part (ii) and noting \eqref{eq:xstep}, we also obtain the boundedness of $\{\y^k\}$.

To prove part (iv), we show first that $U(\x^k) \to U(\y)$ for any $\y\in \Fix (\dr{\F}{\G})$. If condition (A) holds, then since $\theta_i >0$, we get from \eqref{eq:finite_sum} that for any $i\in \I$ and $\y \in \Fix(\dr{\F}{\G})$,
\begin{equation}
\textstyle  \sigma_i\left(  J_{\frac{\lambda}{\lambda_i}A_i} (x_i^k) -  J_{\frac{\lambda}{\lambda_i}A_i} (y_i)\right) + \sigma_m \delta_i (U(\x^k)-U(\y))  \to 0.
 \label{eq:sum_goes_to_zero}
\end{equation}
On the other hand,
\begin{align*}
\textstyle  \left(  J_{\frac{\lambda}{\lambda_i}A_i} (x_i^k) -  J_{\frac{\lambda}{\lambda_i}A_i} (y_i)\right) -  (U(\x^k)-U(\y)) \overset{\eqref{eq:Id-T}}{=} \frac{1}{\mu}R_i(\x^k) - \frac{1}{\mu}R_i(\y) = \frac{1}{\mu}R_i(\x^k),
\end{align*}
where the rightmost term approaches zero. Combining this with \eqref{eq:sum_goes_to_zero}, we see that $(\sigma_m \delta_i + \sigma_i)(U(\x^k)-U(\y)) \to 0$. Since $\sigma_m \delta_i + \sigma_i>0$, it follows that $U(\x^k)-U(\y)\to 0$ for any $\y\in \Fix (\dr{\F}{\G})$. On the other hand,  if condition (B) holds with $\sigma_m>0$, it is immediate from \eqref{eq:finite_sum} that $U(\x^k) \to U(\y)$ for any $\y\in \Fix (\dr{\F}{\G})$. With this, we use \eqref{eq:Id-T}, the fact that $\R (\x^k)\to \bf{0}$ and \cref{prop:F_resolvent} to conclude that $\z^k = \JLambda{\F}(\x^k) \to \bfDelta_{m-1}(U(\y))$. Meanwhile, since $\y \in \Fix (\dr{\F}{\G})$, we have from  \eqref{eq:Id-T=mu(J-J)} and \cref{prop:T_nonexpansive}(i) that $\JLambda{\F}(\y) =  \bfDelta_{m-1}(U(\y)) $. Putting the pieces together, we have shown that for any $\y\in \Fix(\dr{\F}{\G})$,  $\z^k \to \JLambda{\F}(\y) = \bfDelta_{m-1}(U(\y))$. This shows that $ \z^k\to \JLambda{\F}(\bar{\x}) = \bar{\z}$, and in addition, $\JLambda{\F}(\y) = \bar{\z} = \bfDelta_{m-1}(U(\bar{\x}))$ for all $\y\in \Fix(\dr{\F}{\G})$. Therefore,
 $\bfDelta_{m-1} \left( \zer \left( \sum_{i=1}^m A_i\right)\right) = \{ \JLambda{\F}(\bar{\x}) \}$ by \cref{lemma:JFix=zeros}, and consequently, $\bfDelta_{m-1} \left( \zer \left( \sum_{i=1}^m A_i\right)\right) = \{ \bfDelta_{m-1}(U(\bar{\x}))\}$.  On the other hand, we have from \eqref{eq:xstep} and part (ii) that $\y^k - \z^k \to \mathbf{0}$, which together with $\z^k\to \bar{\z}$ implies that $\y^k\to \bar{\z}$
 \ifdefined\submit 

 Finally, part (v) can be proved using the same strategy as in \cite[Theorem 4.5]{MalitskyTam2023}. 
From the definition of the resolvents, we have $\frac{1}{\lambda}\bfLambda (\x^k - \z^k ) \in \F (\z^k)$ and $\frac{1}{\lambda}\bfLambda (\z^k - \y^k) \in \frac{1}{\lambda}\bfLambda (\x^k-\z^k) + \G(\y^k)$. Equivalently, this can be written as 
\begin{equation}
    \begin{bmatrix}
        \z^k-\y^k \\ \frac{1}{\lambda}\bfLambda (\z^k - \y^k) 
    \end{bmatrix} \in \left[ \begin{array}{c}
         \F^{-1} \left( \frac{1}{\lambda}\bfLambda (\x^k - \z^k)  \right) \\  \G (\y^k)
    \end{array} \right]+ \left[ \begin{array}{rr}
        \mathbf{0} & -\bfId \\ \bfId & \bfId
    \end{array}\right] \left[ \begin{array}{c}
         \left( \frac{1}{\lambda}\bfLambda (\x^k - \z^k)  \right) \\
         \y^k 
    \end{array}\right].
    \label{eq:dr_iterates_alternative}
\end{equation}
The operators on the right-hand side are maximal monotone (see \cite[Propositions 20.22 and 20.23]{Bauschke2017}) with the second operator having a full domain. Hence, the sum is maximal monotone by \cref{lemma:maximalmonotone_properties}(ii). Hence, given an arbitrary weak cluster point $(\bar{\z},\bar{\y})$ of $\{ (\z^k,\y^k)\}$ and taking the limit in \eqref{eq:dr_iterates_alternative} through a subsequence of $\{ (\z^k,\y^k)\}$ that converges weakly to $(\bar{\z},\bar{\y})$, we have from \cite[Proposition 20.37(ii)]{Bauschke2017} that 
\ifdefined\submit 
$\mathbf{0} \in  \F^{-1} \left( \frac{1}{\lambda}\bfLambda (\bar{\x} - \bar{\z})  \right) -\bar{\y}$ and $\mathbf{0} \in  \G (\bar{\y}) +  \frac{1}{\lambda}\bfLambda (\bar{\x} - \bar{\z})   + \bar{\y}$
\else 
\begin{equation*}
    \begin{bmatrix}
        \mathbf{0} \\ \mathbf{0}
    \end{bmatrix} \in \left[ \begin{array}{c}
         \F^{-1} \left( \frac{1}{\lambda}\bfLambda (\bar{\x} - \bar{\z})  \right) -\bar{\y}\\  \G (\bar{\y}) +  \frac{1}{\lambda}\bfLambda (\bar{\x} - \bar{\z})   + \bar{\y}
    \end{array} \right].
    \label{eq:dr_iterates_alternative2}
\end{equation*}
\fi 
Consequently, $\bar{\z} = \JLambda{\F}(\bar{\x})$ and $\bar{\y}=\JLambda{\G}(2\bar{\z}-\bar{\x})$, and therefore $\z^k \toweak \bar{\z}$. Since $\y^k-\z^k \to \mathbf{0}$, we also have $\y^k \toweak \bar{\z}$. 
 \else 
 Finally, part (v) can be proved using the same strategy as in \cite[Theorem 4.5]{MalitskyTam2023}, and the proof is presented in \cref{app:weakconvergence_shadow} for completeness. 
 \fi 
\smartqedmark \end{proof}

Condition (A) of \cref{thm:general_convergence} deserves more attention, as the outcome of the theorem depends on the existence of weights $(\delta_i)_{i\in \I}$ satisfying the indicated properties, and the magnitude of the step size parameter $\lambda$ depends on the chosen $(\delta_i)_{i\in \I}$. A sufficient condition for its existence is provided in the following proposition. 

\begin{proposition}
    \label{lemma:constraint_nonempty}
    Let $\sigma_1,\dots, \sigma_m\in \Re$ with $\sigma_m\neq 0$, and suppose that $\I \neq \emptyset$. For each $i\in \I$, let 
    $X_i \coloneqq \{ \delta_i \in \Re : \sigma_i + \sigma_m \delta_i \geq 0 \}$ and $X \coloneqq \prod_{i\in \I}X_i$. Let $S = \{ \delta = (\delta_i)_{i\in \I} : \sum_{i\in \I} \delta_i = 1\}$. Then the following hold:
    \begin{enumerate}[(i)]
        \item $X\cap S$ is compact;
        \item $N_S(\delta) = D_{|\I|} =\{ (c,\dots, c) \in \Re^{|\I|}: c\in \Re\} $ for any $\delta\in S$.
    \end{enumerate}
    Moreover, if $\sum_{i=1}^m \sigma_i >0$, then the following hold:
    \begin{enumerate}[(i),resume]
        \item $\interior (X) \cap S \neq \emptyset$, where $\interior(X)$ denotes the interior of $X$; in particular, $X\cap S \neq \emptyset$; and
        \item $N_{X\cap S}(\delta) = N_X(\delta) + N_S(\delta)$ for any $\delta \in X\cap S$;
     
    \end{enumerate}
\end{proposition}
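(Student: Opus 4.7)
The plan is to treat each claim in order, exploiting the polyhedral structure of $X$ and $S$.

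For (i), I would observe that each $X_i$ is a closed half-line of $\Re$ (with direction depending on the sign of $\sigma_m\neq 0$), so $X$ is a closed convex subset of $\Re^{|\I|}$ and $S$ is a closed affine hyperplane; hence $X\cap S$ is closed and convex. To get compactness I need only boundedness, which I extract from the sum constraint: if $\sigma_m>0$, every $\delta=(\delta_i)_{i\in\I}\in X\cap S$ satisfies $\delta_j\geq -\sigma_j/\sigma_m$ and also $\delta_j=1-\sum_{i\in\I,\,i\neq j}\delta_i\leq 1+\sum_{i\in\I,\,i\neq j}\sigma_i/\sigma_m$, giving a two-sided bound on each coordinate. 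The case $\sigma_m<0$ is symmetric (upper bounds then come from $X_i$, lower bounds from the sum constraint). For (ii), the claim is immediate: $S$ is an affine hyperplane with constant normal direction $(1,\dots,1)\in\Re^{|\I|}$, so $N_S(\delta)=\Span\{(1,\dots,1)\}=D_{|\I|}$ at every $\delta\in S$.

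For (iii), I would exhibit an explicit strictly feasible point. Writing $\bar{\sigma}:=\sum_{i=1}^m\sigma_i$ and noting that $\sum_{i\in\I}\sigma_i+\sigma_m=\bar{\sigma}$ (because $\sigma_i=0$ for $i\notin\I$), I try to make $\sigma_i+\sigma_m\delta_i$ equal to a common positive constant $c$, which forces $\delta_i=(c-\sigma_i)/\sigma_m$. The constraint $\sum_{i\in\I}\delta_i=1$ then pins down $c=\bar{\sigma}/|\I|$, which is strictly positive by the hypothesis $\sum_{i=1}^m\sigma_i>0$. Setting $\delta_i^*:=(\bar{\sigma}/|\I|-\sigma_i)/\sigma_m$, I obtain $\sigma_i+\sigma_m\delta_i^*=\bar{\sigma}/|\I|>0$ for every $i\in\I$, so $\delta^*\in\interior(X)\cap S$, proving (iii) (and a fortiori that $X\cap S\neq\emptyset$).

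For (iv), since both $X$ (a product of half-lines) and $S$ (an affine hyperplane) are polyhedral convex sets, the additive rule $N_{X\cap S}(\delta)=N_X(\delta)+N_S(\delta)$ holds at every $\delta\in X\cap S$ by standard polyhedral normal-cone calculus. An alternative derivation uses (iii): since $\interior(X)\cap S\neq\emptyset$, a classical interior-type constraint qualification applies and yields the same decomposition. I do not anticipate substantive obstacles here --- the argument is a sequence of routine convex-analytic observations, and the only content with any bite is the explicit construction of $\delta^*$ in (iii). The main care-points are verifying the boundedness argument in (i) for both signs of $\sigma_m$ and invoking the correct polyhedral sum rule in (iv).
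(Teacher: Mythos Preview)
Your proposal is correct. Parts (ii), (iii), and (iv) follow essentially the same route as the paper: for (ii) both compute the normal cone of an affine hyperplane; for (iii) your explicit point $\delta_i^*=(\bar{\sigma}/|\I|-\sigma_i)/\sigma_m$ is, after simplification, exactly the point the paper writes down (the paper's formula $\delta_i=\tfrac{1}{|\I|}+\tfrac{\sum_{j\neq i}\sigma_j-(|\I|-1)\sigma_i}{\sigma_m|\I|}$ reduces to yours, and both yield $\sigma_i+\sigma_m\delta_i=\bar{\sigma}/|\I|$); and for (iv) the paper invokes the interior-type qualification furnished by (iii), which is your second alternative.

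The only genuine difference is in (i). You argue directly: each $X_i$ gives a one-sided bound on $\delta_i$, and the constraint $\sum_{i\in\I}\delta_i=1$ combined with the one-sided bounds on the other coordinates yields the opposite bound. The paper instead runs a recession-type contradiction: assume an unbounded sequence in $X\cap S$, normalize, pass to a limit direction $\bar{\delta}^*$ with $\|\bar{\delta}^*\|=1$, $\sum_i\bar{\delta}^*_i=0$, and $\sigma_m\bar{\delta}^*_i\geq 0$ for all $i$, which forces $\bar{\delta}^*=0$. Your argument is more elementary and gives explicit bounds; the paper's argument is the standard template for showing a polyhedron has no nonzero recession direction and would generalize more readily if the constraints were less simple. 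Either is perfectly adequate here.
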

\begin{proof}
It is clear that $X\cap S$ is closed. Suppose that there exists a sequence $\{\delta^k=(\delta_i^k)_{i\in \I}\} \subset X\cap S$ such that $\norm{\delta^k}\to \infty$ as $k\to \infty$. Without loss of generality, assume that $\bar{\delta^k}\coloneqq \frac{\delta^k}{\norm{\delta^k}
}\to \bar{\delta}^*$, where $\norm{\bar{\delta}^*}= 1$. Since $\delta^k \in S$, it follows that 
\[\textstyle  \sum_{i\in\I}\bar{\delta_i}^k = \sum_{i\in \I} \frac{\delta_i^k}{\norm{\delta^k}} = \frac{1}{\norm{\delta^k}} \to 0.\]
Thus, $\sum_{i\in \I}\bar{\delta}_i^* = 0$. On the other hand, since $\delta^k \in X$, it follows that $\frac{\sigma_i}{\norm{\delta^k}} + \sigma_m\frac{\delta_i^k}{\norm{\delta^k}} \geq 0$, and therefore $\sigma_m \bar{\delta}_i^* \geq 0$ for all $i\in \I$. Hence, either $\bar{\delta}_i^* \leq 0$  $\forall i\in \I$, or $\bar{\delta}_i^* \geq 0$ $\forall i\in \I$. Since $\sum_{i\in \I}\bar{\delta}_i^* = 0$, it follows that $\bar{\delta}_i^* =0$ for all $i\in \I$, and therefore $\norm{\bar{\delta}^*}=0$, which is a contradiction. Hence, $X\cap S$ must be bounded. This completes the proof of part (i). For part (ii), note that \memosolved{AT: $\left(S - S \right)^{\perp}$ below is correct?}\memosolved{JH: Yes}
 \begin{equation*}
         \textstyle  N_{S}(\delta) = \left(S - S \right)^{\perp} = \left\lbrace w=(w_1,\dots,w_{|\I|}) \in \Re^{|\I|} : \sum_{i\in \I} w_i = 0 \right\rbrace ^{\perp} = (D_{|\I|}^{\perp})^{\perp} = D_{|\I|},
        \label{eq:normalcone}
    \end{equation*}
where the first equality holds by \cite[Example 6.43]{Bauschke2017}. To prove (iii), take $\delta=(\delta_i)_{i\in \I}$ with 
\ifdefined\submit
$ \delta_i = \frac{1}{|\I|} + \frac{\sum_{\substack{j \in \I \\ j \neq i}} \sigma_j - (|\I|-1)\sigma_i}{\sigma_m |\I|}.$
\else 
\[ \delta_i = \frac{1}{|\I|} + \frac{\sum_{\substack{j \in \I \\ j \neq i}} \sigma_j - (|\I|-1)\sigma_i}{\sigma_m |\I|}.\]
\fi 
It can be verified that $\sum_{i\in \I}\delta_i = 1$, and using the hypothesis that $\sum_{i=1}^m \sigma_i >0$, it can be shown that $\sigma_i + \sigma_m \delta_i>0$. This proves part (iii). Part (iv) is a direct consequence of part (iii) and \cite[Section 1]{Burachik2005}.  
\smartqedmark \end{proof}

From \cref{lemma:constraint_nonempty}(iii), we see that provided that $\sum_{i=1}^{m}\sigma_i>0$, any $\delta = (\delta_i)_{i\in\I}$ from $\interior (X) \cap S \neq \emptyset$ can be chosen so as to satisfy the requirement of condition (A) of \cref{thm:general_convergence}. The last issue we address is how to choose the parameters $\delta$ from $\interior (X) \cap S$, in such a way that we maximize the allowable step size $\lambda$ as dictated by the last requirement stipulated in condition (A).

\begin{proposition}
\label{prop:optimaldelta}
    Let  $\lambda_1,\dots, \lambda_{m-1}\in (0,+\infty)$, $\mu\in (0,2)$, and $\sigma_1,\dots, \sigma_m\in \Re$. Let $\I $, $\I^-$ and $\I^+$ be given by \eqref{eq:I_index}, and let $X_i$ ($i\in \I$), $X$ and $S$ be as in \cref{lemma:constraint_nonempty}. Consider the optimization problem
        \begin{equation}
            \begin{array}{rrl}
               \ds\bar{\lambda}^* \coloneqq  & \ds  \max_{\delta \in \Re^{|\I|}, \bar{\lambda}\geq 0} ~& \bar{\lambda} \\
               & {\rm s.t.}~ & 1+ \frac{\bar{\lambda}}{\lambda_i} \frac{\sigma_i\sigma_m \delta_i}{\sigma_i + \sigma_m \delta_i} - \frac{\mu}{2} \geq  0 \quad  i\in \I, \\
               & & \delta = (\delta_i)_{i\in \I} \in X \cap S.
            \end{array}
            \label{eq:maximizing_stepsize}
        \end{equation}
If $\sum_{i=1}^m \sigma_i >0$ and $\I \neq \emptyset$, then the following holds:
     \begin{enumerate}[(i)] 
        \item  If either $\I^-\neq \emptyset$ and $\sigma_m\neq 0$, or $\I^- =\emptyset$ and $\sigma_m<0$, then \eqref{eq:maximizing_stepsize} has a solution. Moreover, if $(\delta^*,\bar{\lambda}^*)\in S\times \Re_+$ solves \eqref{eq:maximizing_stepsize}, then $\delta^* = (\delta_i^*)_{i\in \I}$ satisfies 
        \begin{subequations} \label{eq:example}
        \begin{align}
        \sigma_i + \sigma_m\delta_i^*>0 \quad \forall i\in \I, \label{eq:interiorcondition}\\
         \textstyle -\frac{\lambda_i(\sigma_i+\sigma_m \delta_i^*)}{\sigma_i\sigma_m \delta_i^*} = -\frac{\lambda_j(\sigma_j+\sigma_m \delta_j^*)}{\sigma_j\sigma_m \delta_j^*} >0\quad \forall i,j\in \I, \label{eq:intersection} 
        \end{align}
        \end{subequations}
        and  $\bar{\lambda}^* = -\left( 1-\frac{\mu}{2}\right) \left( \frac{\lambda_i(\sigma_i+\sigma_m\delta_i^*)}{\sigma_i\sigma_m\delta_i^*}\right)$.
        \item If $\I^-=\emptyset$ and $\sigma_m\geq 0$, then \eqref{eq:maximizing_stepsize} is an unbounded optimization problem, i.e., $\bar{\lambda}^* = +\infty$. 
     \end{enumerate}  
\end{proposition}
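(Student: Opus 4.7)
For Part~(ii) I would exhibit a feasible ray. With $\I^-=\emptyset$ and $\sigma_m\ge 0$, set $\delta_i=1/|\I|$ for each $i\in \I$: then $\delta\in S$, and since $\sigma_i>0$ and $\sigma_m\delta_i\ge 0$ we have $\sigma_i+\sigma_m\delta_i>0$, so $\delta\in X$. Each factor $\sigma_i\sigma_m\delta_i/(\sigma_i+\sigma_m\delta_i)$ is nonnegative, so the inequality constraint reduces to $1-\mu/2+(\text{nonneg})\bar\lambda\ge 0$, which holds for every $\bar\lambda\ge 0$ because $\mu<2$. Hence $\bar\lambda^*=+\infty$.

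For Part~(i), note that the constraint in \eqref{eq:maximizing_stepsize} is only meaningful when $\sigma_i+\sigma_m\delta_i\ne 0$, so the feasible $\delta$ lie in $\interior(X)\cap S$. Writing $c_i(\delta)\coloneqq \sigma_i\sigma_m\delta_i/[\lambda_i(\sigma_i+\sigma_m\delta_i)]$, the inner maximization over $\bar\lambda$ for fixed $\delta\in\interior(X)\cap S$ yields $\bar\lambda(\delta)=\min\{-(1-\mu/2)/c_i(\delta):i\in\I,\ c_i(\delta)<0\}$ (an empty min is $+\infty$). In either subcase of~(i), some $c_i(\delta)<0$ at every $\delta\in\interior(X)\cap S$: if $\I^-\ne\emptyset$, any $i\in\I^-$ works because $\sigma_i+\sigma_m\delta_i>0$ forces $\sigma_m\delta_i>0$ and hence $\sigma_i\sigma_m\delta_i<0$; if $\I^-=\emptyset$ and $\sigma_m<0$, any index with $\delta_i>0$ works (one exists since $\sum_i\delta_i=1$). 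This already yields $\bar\lambda^*<\infty$. As $\delta$ approaches $\partial X\cap S$ along some direction with $\sigma_{i_0}+\sigma_m\delta_{i_0}\to 0^+$, $c_{i_0}(\delta)\to-\infty$ because $\sigma_{i_0}^2\sigma_m\ne 0$, forcing $\bar\lambda(\delta)\to 0$. Since $\interior(X)\cap S$ is nonempty by \cref{lemma:constraint_nonempty}(iii), $\bar\lambda^*>0$, so any maximizing sequence stays away from $\partial X$; continuity of $\bar\lambda(\cdot)$ on $\interior(X)\cap S$ together with compactness of $X\cap S$ then produces a maximizer $\delta^*\in\interior(X)\cap S$, which is exactly \eqref{eq:interiorcondition}.

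For \eqref{eq:intersection} and the closed-form expression for $\bar\lambda^*$, I would apply KKT at $(\delta^*,\bar\lambda^*)$. Since $\delta^*\in\interior(X)$, the multipliers for $\sigma_i+\sigma_m\delta_i\ge 0$ vanish; let $\beta_i\ge 0$ multiply the remaining inequalities and $\alpha\in\Re$ multiply $\sum_i\delta_i=1$. A direct computation gives $c_i'(\delta_i)=\sigma_i^2\sigma_m/[\lambda_i(\sigma_i+\sigma_m\delta_i)^2]$, which is nonzero under~(i) since $\sigma_i,\sigma_m\ne 0$. Stationarity in $\bar\lambda$ yields $1+\sum_i\beta_ic_i(\delta^*)=0$, while stationarity in each $\delta_i$ reads $\bar\lambda^*\beta_ic_i'(\delta_i^*)=\alpha$. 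If $\alpha=0$ then all $\beta_i=0$, contradicting $\sum_i\beta_ic_i(\delta^*)=-1$; hence $\alpha\ne 0$, all $\beta_i>0$, and every remaining inequality is active, giving $1-\mu/2+\bar\lambda^*c_i(\delta^*)=0$, i.e.\ $\bar\lambda^*=-(1-\mu/2)\lambda_i(\sigma_i+\sigma_m\delta_i^*)/(\sigma_i\sigma_m\delta_i^*)$ for every $i\in\I$, which is both \eqref{eq:intersection} and the stated formula. The main obstacle I expect is the boundary analysis confining the maximizer to $\interior(X)\cap S$; without it the KKT step would have to carry active multipliers for $\sigma_i+\sigma_m\delta_i\ge 0$ and would lose the clean closed-form identities.
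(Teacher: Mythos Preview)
Your proof of Part~(ii) is essentially identical to the paper's: both exhibit a fixed $\delta$ in the nonnegative cone intersected with $S$ and observe that the inequality constraints reduce to $1-\mu/2\ge 0$ for all $\bar\lambda\ge 0$.

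For Part~(i) your argument is correct but follows a genuinely different route. The paper eliminates $\bar\lambda$ by reformulating the problem as
\[
\max_{\delta\in X\cap S}\; f(\delta),\qquad f(\delta)=\min_{i\in\I} f_i(\delta_i),
\]
where each $f_i$ is an extended-real function on $X_i$ (equal to $\lambda_i(\sigma_i+\sigma_m\delta_i)/(-\sigma_i\sigma_m\delta_i)$ on a carefully chosen set $Y_i$ and $+\infty$ elsewhere). It then proves continuity of $f$ on a set containing $X\cap S$, uses compactness for existence, observes $f(\delta^*)>0\Rightarrow \delta^*\in\interior(X)$, and invokes the Clarke subdifferential of the pointwise minimum to force the active set to be all of $\I$. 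You instead keep the joint problem in $(\delta,\bar\lambda)$ and apply smooth KKT directly; the boundary estimate $\bar\lambda(\delta)\to 0$ as $\delta\to\partial X\cap S$ plays exactly the role of the paper's ``$f(\delta^*)>0$'' step in confining the optimum to $\interior(X)$. Your route avoids nonsmooth calculus entirely and is arguably more elementary; the paper's route cleanly separates existence (via continuity of a single scalar function) from characterization.

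One point worth tightening: you apply KKT without verifying a constraint qualification. This is easy here. Since you already know $\bar\lambda^*>0$, every active inequality at $(\delta^*,\bar\lambda^*)$ satisfies $c_i(\delta^*)=-(1-\mu/2)/\bar\lambda^*<0$; taking $d=(0_\delta,-1)$ gives $\nabla h^\top d=0$ and $\nabla g_i^\top d=-c_i(\delta^*)>0$ for all active $i$, so MFCQ holds and KKT multipliers exist. With this in hand, your chain ``$\alpha=0\Rightarrow$ all $\beta_i=0\Rightarrow 1=0$'' is valid (since $\bar\lambda^*>0$ and $c_i'(\delta_i^*)=\sigma_i^2\sigma_m/[\lambda_i(\sigma_i+\sigma_m\delta_i^*)^2]\neq 0$ under~(i)), and the conclusion that all inequality constraints are active delivers both \eqref{eq:intersection} and the closed form for $\bar\lambda^*$.
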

\begin{proof}
 For each $i\in \I$, let 
 \ifdefined\submit 
 $Y_i = X_i$ if $i\in \I^-$, and $Y_i = \{\delta_i\in X_i: \delta_i\sigma_m<0\}$ if $i\in \I^+$
 \else 
    \[ Y_i \coloneqq \begin{cases}
        X_i & \text{if~} i\in \I^-, \\
        \{\delta_i\in X_i: \delta_i\sigma_m<0\} & \text{if}~i\in \I^+,
    \end{cases}\]
    \fi 
    and define $f_i:X_i\to [0,+\infty]$ by
    \begin{equation*}
       \textstyle  f_i(\delta_i) =  \begin{cases}
        \frac{\lambda_i(\sigma_i+\sigma_m \delta_i)}{-\sigma_i\sigma_m \delta_i} & \text{if}~\delta_i \in  Y_i ,\\
        +\infty & \text{if}~\delta_i \in X_i \setminus Y_i.\\
    \end{cases}
    \label{eq:f_i}
    \end{equation*}
Note that given $\delta_i \in X_i$, $f_i(\delta_i)$ represents the largest nonnegative (extended-real) number such that if $0< \lambda<\left( 1-\frac{\mu}{2}\right)f_i(\delta_i)$, then the inequality $1+\frac{\lambda}{\lambda_i} \frac{\sigma_i\sigma_m\delta_i}{\sigma_i+\sigma_m\delta_i} > \frac{\mu}{2}$ holds true.  Hence, the problem \eqref{eq:maximizing_stepsize} can be reformulated as 
\ifdefined\submit
    \begin{equation}
        \begin{array}{rl}
             \ds \max_{\delta\in\Re^{|\I|}} ~& \ds f(\delta) \coloneqq \min_{i\in \I} f_i(\delta_i)   \qquad 
             {\rm s.t.}~\qquad  \delta = (\delta_i)_{i\in \I}\in X\cap S 
        \end{array}.
        \label{eq:maximize_stepsize_2}
    \end{equation}
\else
    \begin{equation}
        \begin{array}{rl}
             \ds \max_{\delta\in\Re^{|\I|}} ~& \ds f(\delta) \coloneqq \min_{i\in \I} f_i(\delta_i)   \\
             {\rm s.t.}~& \delta = (\delta_i)_{i\in \I}\in X\cap S 
        \end{array}.
        \label{eq:maximize_stepsize_2}
    \end{equation}
\fi 
Moreover, if $\delta^*$ solves \eqref{eq:maximize_stepsize_2}, then $(\delta^*, \bar{\lambda}^*)$ solves \eqref{eq:maximizing_stepsize} where $\bar{\lambda}^* =\left( 1-\frac{\mu}{2}\right)f(\delta^*)$. 

We now show that $f$ is continuous on the set $Z$, 
\ifdefined\submit
where $Z=X$ if $\I^- \neq  \emptyset$, and $Z = X\setminus \Re^{|\I|}_- $ if $I^- = \emptyset$ and $\sigma_m<0$, and
\else 
defined as 
\[ Z \coloneqq \begin{cases}
    X & \text{if}~\I^- \neq  \emptyset ,\\
    X\setminus \Re^{|\I|}_- & \text{if}~\I^- = \emptyset~\text{and}~\sigma_m <0,
\end{cases}\]
where
\fi 
 $\Re^{|\I|}_- = \{ \delta\in \Re^{|\I|} : \delta_i\leq 0 ~\forall i \in \I\}$.  Let $\delta = (\delta_i)_{i\in \I}\in Z$. First, suppose that $\delta_i \in Y_i$ for all $i\in \I$. Note that each $f_i$ is continuous on $\mathcal{N}_i\cap Y_i$ for some neighborhood $\mathcal{N}_i$ of $\delta_i$. Thus, $f\equiv \min_{i\in \I}f_i$ on the set  $\mathcal{N}\times Y$, where $\mathcal{N} \coloneqq \prod_{i\in\I}\mathcal{N}_i$ and $Y \coloneqq \prod_{i\in \I} Y_i$. Since each $f_i$ is continuous on $\mathcal{N}_i\cap  Y_i$, the continuity of $f$ on $\mathcal{N}\times Y$ follows. Hence, $f$ is continuous at $\delta$. Suppose, on the other hand, that $\J (\delta)\coloneqq \{i\in \I : \delta_i \in X_i\setminus Y_i \}$ is nonempty. Observe that $\sigma_i>0$ for all $i\in \J (\delta)$. Since $f\equiv +\infty $ on $X_i \setminus Y_i$ and $\lim_{\underset{\delta'_i  \in Y_i}{\delta'_i  \to 0}} f_i(\delta'_i )= +\infty$ for all $i\in \J(\delta)$,  there exists a neighborhood $\mathcal{N}$ of $\delta$ such that  $f\equiv  \min_{i\in \I \setminus \J (\delta)} f_i $ on $\mathcal{N}\cap X$. We note that the index set $\I \setminus \J (\delta)$ is nonempty under our hypotheses. Indeed, this is clear when $\I^-\neq \emptyset$ since $\I^- \subseteq \I \setminus \J (\delta)$. On the other hand, if $\I^- = \emptyset$ and $\sigma_m<0$, note that $Y_i = (0,-\sigma_i/\sigma_m]$ for all $i\in \I^+ = \I$. Since $\delta \in Z = X\setminus \Re^{|\I|}_-$, it follows that there exists $j\in \I$ such that $\delta_j >0$. Necessarily, $j\in \I\setminus \J(\delta)$, and so $\I \setminus \J(\delta)$ is nonempty, as claimed. Hence, $\mathcal{N}\cap X$, $f$ is the pointwise minimum of the continuous functions $f_i$'s with $i\in \I \setminus \J(\delta) \neq \emptyset $, and therefore $f$ is continuous on $\mathcal{N}\cap X$. This proves \ifdefined\submit \else the claim \fi that $f$ is continuous on $Z$. As a side note, which will be useful later, the above arguments  show that for any $\delta\in Z$, there exists a neighborhood $\mathcal{N}$ of $\delta$ such that 
\begin{equation}
\textstyle f (\delta') = \min_{i\in \I \setminus \J(\delta)}f_i (\delta'_i) \quad \forall \delta' \in \mathcal{N}\cap Z.
\label{eq:f_piecewisesmooth}
\end{equation}
Since $f$ is continuous on $Z$, then $f$ is also continuous on $Z\cap S=X \cap S$, where the last equality holds since $S \cap \Re^{|\I|}_-=\emptyset$. Since $X\cap S$ is a nonempty compact set by \cref{lemma:constraint_nonempty}(i) and (iii), it follows that \eqref{eq:maximize_stepsize_2} has a solution, and so does \eqref{eq:maximizing_stepsize}. This proves the first claim of part (i). 

Now, let $\delta^*\in X\cap S$ be an optimal solution of \eqref{eq:maximize_stepsize_2}. Note that $f$ is a nonnegative function, and $f(\delta) = 0$ if and only if $\delta_i = -\frac{\sigma_i}{\sigma_m}$ for some $i\in \I$. Thus, $f(\delta^*)>0$ and  $\delta^*\in \interior(X)\cap S = \interior (Z) \cap S$. This implies that \eqref{eq:interiorcondition} holds and $N_X(\delta^*) = \{ 0\}$. In addition, by the optimality of $\delta^*$, we have from \cite[Theorems 10.1 and 10.10]{Rockafellar1970} that 
    \begin{equation}
    0\in \partial (-f(\delta^*)) + N_{X\cap S}(\delta^*),  \label{eq:optimalitycondition}
    \end{equation}
where $\partial f$ denotes the Clarke subdifferential of $f$. Using \cite[Exercise 8.31]{RW98} and the representation \eqref{eq:f_piecewisesmooth}, we have 
    \begin{equation}
        \textstyle \partial (-f(\delta^*)) = \co \left\lbrace -\frac{\lambda_i}{(\delta_i^*)^2}e_i : i\in \mathcal{A}(\delta^*)\right\rbrace,
        \label{eq:partial_f}
    \end{equation}
where $e_i$ is the standard unit vector in $\Re^{|\I|}$, $\mathcal{A}(\delta^*) \coloneqq \{ i: i\in \I\setminus \J (\delta^*) \text{ ~s.t.~}f(\delta^*)=f_i(\delta_i^*)  \}$, and ``$\co$'' denotes the convex hull. Using \cref{lemma:constraint_nonempty}(ii) and (iv) together with  \eqref{eq:optimalitycondition} and \eqref{eq:partial_f}, we conclude that there exists $\{ \alpha_i \in [0,1] : i\in \mathcal{A}(\delta^*)\}$ with $\sum_{i\in \mathcal{A}(\delta^*)} \alpha_i = 1$ and $\sum_{i\in \mathcal{A}(\delta^*)} \frac{\alpha_i\lambda_i}{(\delta_i^*)^2}e_i \in D_{|\I|}. $ Since $\lambda_i >0$ and $\frac{1}{(\delta_i^*)^2}\neq 0$ for any $\delta_i^*\in \Re$, we must necessarily have $\J(\delta^*)= \emptyset$ and $\mathcal{A}(\delta^*) = \I$. Thus, $f_i(\delta_i^*) = f_j(\delta_j^*)$ for all $i,j \in \I$, i.e., \eqref{eq:intersection} holds. This completes the proof of part (i).

Finally, we prove part (ii). Since $\I^- =\emptyset$,  $\sigma_i>0$ for all $i\in \I$. Together with $\sigma_m\geq 0$, we see that $\Re^{|\I|}_+\subseteq X$. For all $\delta\in \Re^{|\I|}_+ \cap S$, 
the inequality constraints in \eqref{eq:maximizing_stepsize} are trivially satisfied since $\mu \in (0,2)$. Thus, the claim immediately follows. 
\smartqedmark \end{proof}

We now restate \cref{thm:general_convergence} based on \cref{lemma:constraint_nonempty} and \cref{prop:optimaldelta}. Note that the conditions in \cref{prop:optimaldelta}(ii) correspond to the maximal monotone case where at least one among $\sigma_1,\dots,\sigma_{m-1}$ is strictly positive, a case which was not included yet in condition (B) of \cref{thm:general_convergence}. We now include this in condition (B) of the following theorem to distinguish monotone cases from nonmonotone ones.

\begin{theorem}\label{thm:convergence_optimal}
    Let $A_i:\H\toset \H$ be maximal $\sigma_i$-monotone for each $i=1,\dots,m$, and assume that $\zer\left( A_1+\cdots + A_m\right)\neq \emptyset$. Let $\mu\in (0,2)$, $\lambda_1,\dots, \lambda_{m-1}\in (0,+\infty)$ with $\sum_{i=1}^{m-1}\lambda_i = 1$, and let $\bfLambda$ be given by \eqref{eq:Lambda}. Suppose that either one of the following holds:
    \begin{enumerate}
        \item[(A)] (Nonmonotone case). There exists $j\in \{1,\dots,m\}$ such that $\sigma_j<0$, $\sigma_m\neq 0$, $\sum_{i=1}^m \sigma_i >0$, and  $\bar{\lambda}^*$ is defined in \memosolved{AT: \eqref{eq:maximizing_stepsize} is more suitable?} \eqref{eq:maximizing_stepsize};
        \item[(B)] (Monotone case). $\sigma_i\geq 0$ and $\bar{\lambda}^*=+\infty$.
    \end{enumerate}
    If $\lambda\in (0,\bar{\lambda}^*)$ and $\{ (\x^k,\z^k,\y^k)\}$ is a sequence generated by \eqref{eq:DR_scaled} from an arbitrary initial point $\x^0\in \H^{m-1}$, then 
   \begin{enumerate}[(i)]
       \item  There exists $\bar{\x}\in \Fix (\dr{\F}{\G})$ such that $\x^k\toweak \bar{\x}$ and \\ $\bar{\z} \coloneqq \JLambda{\F}(\bar{\x}) \in \bfDelta_{m-1} \left( \zer \left( \sum_{i=1}^m A_i\right)\right)$; 
       \item $\norm{(\bfId - \dr{\F}{\G})\x^k}  = o(1/\sqrt{k})$ as $k\to \infty$;  and
       \item $\norm{\y^{k+1} - \y^k}  = o(1/\sqrt{k})$ and $\norm{\z^{k+1} - \z^k}  = o(1/\sqrt{k})$ as $k\to\infty$.
   \end{enumerate}
Moreover, 
    \begin{enumerate}
        \item[(iv)] Suppose either (A) holds, or (B) holds together with $\exists\, j\in\{1,\dots,m\}$ such that $\sigma_j>0$. Then $ \z^k \to \bar{\z}$, $ \y^k \to \bar{\z}$, and $ \zer \left( \sum_{i=1}^m A_i\right) = \{ U(\bar{\x})\}$; and 
        \item[(v)] If (B) holds with $\sigma_i=0$ for all $i=1,\dots,m$, then $\z^k \toweak \bar{\z}$ and $ \y^k \toweak \bar{\z}$. 
    \end{enumerate}

\end{theorem}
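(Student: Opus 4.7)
The plan is to reduce this theorem directly to \cref{thm:general_convergence} by verifying that each of the new conditions (A) and (B) implies the hypotheses of the appropriate case of that theorem, with \cref{prop:optimaldelta} supplying the parameters $\delta=(\delta_i)_{i\in\I}$ and the step-size bound $\bar\lambda^*$. Once the reduction is established, conclusions (i)--(iii) are immediate, and (iv)--(v) follow by matching each situation to the corresponding refined conclusion of \cref{thm:general_convergence}(iv)--(v).

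For case (A) of the new theorem, first I would observe that the hypothesis $\sum_{i=1}^m\sigma_i>0$ together with the existence of some $\sigma_j<0$ forces $\I\neq\emptyset$ (indeed, if $j\le m-1$ then $j\in\I^-\subseteq\I$; if $j=m$ then $\I^-=\emptyset$, but the inequality $\sum_i\sigma_i>0$ with $\sigma_m<0$ forces some $\sigma_i>0$ for $i\le m-1$, again giving $\I\ne\emptyset$). This places us in the setting of \cref{prop:optimaldelta}(i), which provides an optimal pair $(\delta^*,\bar\lambda^*)$ with $\delta^*\in S$ and $\sigma_i+\sigma_m\delta_i^*>0$ for all $i\in\I$. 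For any $\lambda\in(0,\bar\lambda^*)$, by the very definition of $\bar\lambda^*$ as the optimal value of \eqref{eq:maximizing_stepsize}, the inequality $1+\tfrac{\lambda}{\lambda_i}\tfrac{\sigma_i\sigma_m\delta_i^*}{\sigma_i+\sigma_m\delta_i^*}>\tfrac{\mu}{2}$ holds for every $i\in\I$. Hence condition (A) of \cref{thm:general_convergence} is satisfied with $\delta=\delta^*$, and conclusions (i)--(iv) of that theorem apply.

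For case (B) of the new theorem, I would split into subcases based on $\I$. If $\I=\emptyset$, then $\sigma_i=0$ for $i=1,\dots,m-1$ and $\sigma_m\ge 0$, so condition (B) of \cref{thm:general_convergence} applies directly (with $\bar\lambda^*=+\infty$ matching the unbounded case of \cref{prop:optimaldelta}(ii)); here the conclusions of the new theorem's (iv) (if $\sigma_m>0$) or (v) (if $\sigma_m=0$) follow from the corresponding parts of \cref{thm:general_convergence}. If $\I\neq\emptyset$ (so some $\sigma_i>0$ for $i\le m-1$), I take the simple choice $\delta_i=1/|\I|$ for $i\in\I$: then $\sigma_i+\sigma_m\delta_i>0$ because $\sigma_i>0$ and $\sigma_m\ge 0$, and the inequality $1+\tfrac{\lambda}{\lambda_i}\tfrac{\sigma_i\sigma_m\delta_i}{\sigma_i+\sigma_m\delta_i}\ge 1>\tfrac{\mu}{2}$ is trivially met for every $\lambda>0$, consistent with $\bar\lambda^*=+\infty$. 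This places us in condition (A) of \cref{thm:general_convergence}, giving the strong convergence claimed in (iv) of the new theorem.

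The proof requires no new analytic estimates; everything follows from a careful bookkeeping of which case of \cref{thm:general_convergence} is invoked. Conclusions (i)--(iii) are transcribed verbatim from (i)--(iii) of \cref{thm:general_convergence}. For (iv), both scenarios reduce to a setting where \cref{thm:general_convergence}(iv) yields strong convergence of $\{\z^k\}$ and $\{\y^k\}$ to $\bar\z$ together with singleton characterization of $\zer(\sum_i A_i)$; and (v) corresponds exactly to the fully monotone subcase of \cref{thm:general_convergence}(v). The main obstacle, more organizational than mathematical, is ensuring the case distinction in (A) (in particular, whether the negative modulus occurs among $\sigma_1,\dots,\sigma_{m-1}$ or is $\sigma_m$ itself) is handled uniformly so that \cref{prop:optimaldelta}(i) can be invoked in either branch; this is essentially already taken care of by the two disjunctive hypotheses in that proposition.
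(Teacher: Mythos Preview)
Your proposal is correct and matches the paper's intent: the paper presents \cref{thm:convergence_optimal} explicitly as a restatement of \cref{thm:general_convergence} in light of \cref{lemma:constraint_nonempty} and \cref{prop:optimaldelta}, and your argument carries out exactly that reduction. One small point worth making explicit: for case~(A), the strict inequality $1+\tfrac{\lambda}{\lambda_i}\tfrac{\sigma_i\sigma_m\delta_i^*}{\sigma_i+\sigma_m\delta_i^*}>\tfrac{\mu}{2}$ for $\lambda<\bar\lambda^*$ is not literally ``by definition'' of the optimal value (the constraints in \eqref{eq:maximizing_stepsize} are non-strict), but follows because \eqref{eq:intersection} forces each coefficient $\tfrac{\sigma_i\sigma_m\delta_i^*}{\sigma_i+\sigma_m\delta_i^*}$ to be strictly negative, so the affine map in $\lambda$ is strictly decreasing and equals $\mu/2$ precisely at $\bar\lambda^*$.
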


\begin{remark}
\label{rem:comparewithnaive}
    Suppose that the weights are equal, i.e., $\lambda_1= \cdots = \lambda_{m-1} = \frac{1}{m-1}$, and $\lambda = \frac{\gamma}{m-1}$. Notice that condition (B) of \cref{prop:naive_douglas} is covered by condition (B) of \cref{thm:convergence_optimal}. On the other hand, \cref{thm:convergence_optimal} under condition (A) offers a significantly stronger result than \cref{prop:naive_douglas}(A). First, note that $\sigmahat+\frac{\sigma_m}{m-1} >0 $ implies that $\sum_{i=1}^{m} \sigma_i >0$, but the latter is a much weaker condition. In particular, the requirement $\sigmahat+\frac{\sigma_m}{m-1} >0 $ does not cover situations where $\I^-\neq \emptyset$ and $\sigma_m\leq 0$, or $\I^-=\emptyset$, $\I^+ \neq \I$ and $\sigma_m<0$ (c.f. condition (A) of \cref{thm:convergence_optimal} which summarizes the setting in \cref{prop:optimaldelta}(i)). For the cases that are covered, the range of step size for $\lambda$ prescribed by \cref{thm:convergence_optimal} is larger than the one provided in \cref{prop:naive_douglas}. In particular, as in condition (A) of \cref{prop:naive_douglas}, suppose that 
    \begin{equation}
        \textstyle 1+\gamma \frac{\sigmahat\sigma_m \left( \frac{1}{m-1}\right) }{\sigmahat + \sigma_m \left( \frac{1}{m-1}\right) }>\frac{\mu}{2}.
        \label{eq:stepsize_naive}
    \end{equation}
    \\
   \noindent \textit{Case 1.} Suppose $\I^-\neq \emptyset$ and $\sigma_m>0$. Set $\delta_i =\frac{1}{m-1}$ for all $i\in \I^-$, and choose $\{\delta_i: i\in \I^+\}$ with $\delta_i\geq 0$ such that $\sum_{i\in \I^+} \delta_i= 1 - \frac{|\I^-|}{m-1}$, so that $\delta=(\delta_i)_{i\in \I} \in S$. Since \memosolved{AT: $\sigmahat+ \frac{\sigma_m}{m-1} >0$ ??} $\sigmahat+ \frac{\sigma_m}{m-1} >0$,  $\delta\in X$. With this choice of $\delta$ together with \eqref{eq:stepsize_naive} and the minimality of $\sigmahat$, it is not difficult to show that the inequality constraints in \eqref{eq:maximizing_stepsize} are satisfied. In other words, $(\delta,\frac{\gamma}{m-1})$ is feasible to \eqref{eq:maximizing_stepsize}. Hence, the claim follows. \\
    \noindent \textit{Case 2.} Suppose that $\I^+ = \I$ and $\sigma_m<0$. To prove the claim, we only need to choose $\delta_i = \frac{1}{m-1}$ for all $i\in \I$, and argue as in the previous case. 

\end{remark}

We close this section with the convergence result for the DR algorithm 
\ifdefined\submit 
\else
with $\F$ and $\G$ interchanged:
\fi 
\begin{equation}
    \x^{k+1} \in \dr{\G}{\F} (\x^k), 
    \label{eq:DR_scaled2}
\end{equation}
where 
\ifdefined\submit 
$\dr{\G}{\F} (\x) \coloneqq \{ \x+\mu (\y - \z ): \z\in \JLambda{\G}(\x), ~\y \in \JLambda{\F} (2\z-\x)\}.$
\else 
$\dr{\G}{\F}:\H^{m-1} \toset \H^{m-1}$ is given by 
    \begin{equation*}
        \dr{\G}{\F} (\x) \coloneqq \{ \x+\mu (\y - \z ): \z\in \JLambda{\G}(\x), ~\y \in \JLambda{\F} (2\z-\x)\}.
        \label{eq:dr_map2}
    \end{equation*}
\fi 
\ifdefined\submit 
\else 
Despite switching the operators $\F$ and $\G$, we can still obtain similar results. The iterations \eqref{eq:DR_scaled2} can also be written as 
    \begin{equation*} \label{eq:dr_stepbystep2}
            \begin{array}{rl}
             \z^k & \in \JLambda{\G}(\x^k) \\
            \y^k & \in \JLambda{\F}(2\z^k-\x^k) \\
            \x^{k+1} & = \x^k + \mu (\y^k - \z^k). 
            \end{array}
    \end{equation*}
\fi 
\ifdefined\submit 
The proof is similar to \cref{thm:convergence_optimal}, and we omit the details for succinctness. We denote 
$ \z^k  \in \JLambda{\G}(\x^k) , \y^k  \in \JLambda{\F}(2\z^k-\x^k), \text{ and } 
            \x^{k+1}  = \x^k + \mu (\y^k - \z^k). $
\else 
The convergence proof uses the same techniques as before, but it is not straightforward so we include its proof in \cref{app:dr_switched}. 
\fi

\begin{theorem}
    \label{thm:general_convergence2}
    Suppose that the hypotheses of \cref{thm:convergence_optimal} hold.
   If $\lambda\in (0,\bar{\lambda}^*)$ and $\{(\x^k, \z^k,\y^k)\}$ is a sequence generated by \memosolved{AT: \eqref{eq:DR_scaled}??} \eqref{eq:DR_scaled2} from an arbitrary initial point $\x^0\in \H^{m-1}$, then 
   \begin{enumerate}[(i)]
       \item  $\{\x^k\}$ is bounded, there exists $\bar{\x}\in \Fix (\dr{\G}{\F})$ such that $\x^k\toweak \bar{\x}$, and $\bar{\z} \coloneqq \JLambda{\G}(\bar{\x}) \in \bfDelta_{m-1} \left( \zer \left( \sum_{i=1}^m A_i\right)\right)$; 
       \item $\norm{(\bfId - \dr{\G}{\F})\x^k}  = o(1/\sqrt{k})$ as $k\to \infty$;  and
       \item $\norm{\y^{k+1} - \y^k}  = o(1/\sqrt{k})$ and $\norm{\z^{k+1} - \z^k}  = o(1/\sqrt{k})$ as $k\to\infty$. In addition, $\{\z^k\}$ and $\{\y^k\}$ are bounded sequences.
        \item Suppose either (A) holds, or (B) holds together with $\exists\, j\in\{1,\dots,m\}$ such that $\sigma_j>0$. Then $ \z^k \to \bar{\z}$, $ \y^k \to \bar{\z}$, and  $ \zer \left( \sum_{i=1}^m A_i\right) = \left\lbrace J_{\lambda A_m}\left( \sum_{i=1}^{m-1} \lambda_i \bar{x}_i\right) \right\rbrace$; and 
        \item If (B) holds with $\sigma_i=0$ for all $i=1,\dots,m$, then $\z^k \toweak \bar{\z}$ and $ \y^k \toweak \bar{\z}$,. 
    \end{enumerate}
\end{theorem}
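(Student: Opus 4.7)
My plan is to mirror the proof of \cref{thm:general_convergence}/\cref{thm:convergence_optimal} with $\F$ and $\G$ interchanged in the DR operator. First I would verify that the parameter conditions (A) or (B) force $1+(\lambda/\lambda_i)\sigma_i>\mu/2$ for all $i=1,\dots,m-1$ and $1+\lambda\sigma_m>\mu/2$, by exactly the chain of inequalities used at the start of the proof of \cref{thm:convergence_optimal}; \cref{rem:cocoercive_and_singlevalued} then renders both $\JLambda{\F}$ and $\JLambda{\G}$ single-valued with full domain, so $\dr{\G}{\F}$ is well-defined. I would then establish the analog of \cref{lemma:JFix=zeros}, namely that $\x\in\Fix(\dr{\G}{\F})$ iff there exists $\z\in\JLambda{\G}(\x)\cap \bfDelta_{m-1}(\zer(\sum_{i=1}^m A_i))$, whose proof is a line-by-line copy of \cref{lemma:JFix=zeros} after swapping $\F$ and $\G$.

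The key step is the analog of the descent inequality \eqref{eq:T_nonexpansive} in \cref{prop:T_nonexpansive}(iii). Using the identity $\dr{\G}{\F}=\tfrac{(2-\mu)\bfId+\mu\RLambda{\F}\RLambda{\G}}{2}$ together with \eqref{eq:identity_squarednorm}, and applying \cref{prop:properties_reflected_resolvents}(ii) to $\RLambda{\G}$ first and then (i) to $\RLambda{\F}$, the same structural inequality emerges under the dictionary: the scalar $U(\x)$ of \cref{prop:T_nonexpansive} is replaced by $W(\x):=J_{\lambda A_m}(\sum_{i=1}^{m-1}\lambda_i x_i)$ (so that $\JLambda{\G}(\x)=\bfDelta_{m-1}(W(\x))$ by \cref{prop:G_resolvent}), while the resolvents $J_{\frac{\lambda}{\lambda_i}A_i}(x_i)$ become the post-reflection resolvents $J_{\frac{\lambda}{\lambda_i}A_i}(2W(\x)-x_i)$, which are precisely the components of $\y^k$ generated by the algorithm. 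Writing $R_i(\x)=\mu(W(\x)-J_{\frac{\lambda}{\lambda_i}A_i}(2W(\x)-x_i))$ for the coordinates of $\R=\bfId-\dr{\G}{\F}$, the identity \eqref{eq:identity_squarednorm2} applied with the same weights $(\delta_i)_{i\in\I}$ from \cref{prop:optimaldelta} produces
\[
\normlambda{\dr{\G}{\F}(\x)-\dr{\G}{\F}(\y)}^2 \leq \normlambda{\x-\y}^2 - \tfrac{2}{\mu}\sum_{i=1}^{m-1}\lambda_i\kappa_i\norm{R_i(\x)-R_i(\y)}^2 - (\text{nonnegative correction terms}),
\]
with the very same constants $\kappa_i,\theta_i,\alpha$ of \eqref{eq:alpha,kappa,theta}.

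From here the rest is verbatim the proof of \cref{thm:general_convergence}: setting $\y\in\Fix(\dr{\G}{\F})$ yields Fej\'er monotonicity of $\{\x^k\}$ (and hence boundedness), telescoping gives $\sum_k\norm{R_i(\x^k)}^2<\infty$ hence $(\bfId-\dr{\G}{\F})\x^k\to\mathbf{0}$ at the rate $o(1/\sqrt{k})$, and the standard demiclosedness/Opial argument of \cite[Theorem 4.2]{DaoPhan2019} delivers $\x^k\toweak\bar{\x}\in\Fix(\dr{\G}{\F})$, producing $\bar{\z}=\JLambda{\G}(\bar{\x})\in\bfDelta_{m-1}(\zer(\sum A_i))$ via the fixed-point characterization. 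The rates for $\norm{\z^{k+1}-\z^k}$ and $\norm{\y^{k+1}-\y^k}$ follow from \eqref{eq:JG_cocoercive_simplified} and \eqref{eq:JF_cocoercive_simplified}. For (iv), the vanishing correction terms force $W(\x^k)\to W(\bar{\x})$, whence $\z^k=\bfDelta_{m-1}(W(\x^k))\to\bar{\z}$, and $\y^k\to\bar{\z}$ via $\y^k-\z^k\to\mathbf{0}$; the uniqueness $\zer(\sum A_i)=\{W(\bar{\x})\}=\{J_{\lambda A_m}(\sum\lambda_i\bar{x}_i)\}$ follows as in \cref{thm:general_convergence}(iv). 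Case (v) uses the same maximal monotone operator trick as at the end of \cref{thm:general_convergence}, rewriting the inclusion governing $(\z^k,\y^k)$ with the roles of $\F$ and $\G$ swapped and invoking \cite[Proposition 20.37(ii)]{Bauschke2017}. The only (mechanical) obstacle is keeping track of the fact that the ``free'' decomposition coordinates are now the components of $\y^k$ rather than $\z^k$; this is bookkeeping, not a new idea.
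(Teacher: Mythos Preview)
Your proposal is correct and mirrors the paper's own proof: the paper proceeds by establishing the swapped analogues of \cref{lemma:JFix=zeros} and \cref{prop:T_nonexpansive} (with $W(\x)=J_{\lambda A_m}(\sum_i\lambda_i x_i)$ playing the role of $U$, and the components of $\y^k$ replacing those of $\z^k$), and then follows the arguments of \cref{thm:general_convergence} verbatim. Your dictionary, the resulting descent inequality with the same constants $\kappa_i,\theta_i,\alpha$, and the handling of parts (iv) and (v) all match the paper's treatment.
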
 

 \begin{remark}
    Campoy's DR algorithm in \cite{Campoy2022} corresponds to \eqref{eq:DR_scaled2} with equal weights, and the operators are assumed to be all maximal monotone, the setting described in \cref{thm:convergence_optimal} (B). Hence, \cref{thm:general_convergence2} generalizes the result of \cite[Theorem 5.1]{Campoy2022} to  general weights. Moreover, we have from \cref{thm:general_convergence2}(iv) that strong convergence of $\{\z^k\}$ and $\{\y^k\}$ holds provided any one of the maximal monotone operators $A_i$'s is maximal $\sigma_i$-monotone with $\sigma_i>0$. That is, the ordering of the operators does not matter, different from \cite[Theorem 5.1(ii)]{Campoy2022}. Moreover, compared with \cite[Theorem 5.1]{Campoy2022}, \cref{thm:general_convergence2} additionally provides convergence rates. 
 \end{remark}
 

\section{DR for structured classes of nonconvex optimization problems}
\label{sec:mainresults_optimization}
We now focus on the  problem 
    \begin{equation}
        \min _{x\in \H} f_1(x) + \cdots + f_m(x),
        \label{eq:optimization}
    \end{equation}
where $f_i:\H \to (-\infty,+\infty]$ is a proper closed function  for all $i=1,\dots,m$. 

\subsection{Nonconvex optimization under weak/strong convexity}\label{sec:dr_optimization_weakconvex}
To apply the Douglas-Rachford algorithm for solving \eqref{eq:optimization}, we consider an associated inclusion problem involving subdifferentials. In this section, the setting we consider is when each $f_i$ is a $\sigma_{f_i}$-convex function for some $\sigma_i\in \Re$, for each $i=1,\dots,m$. For simplicity, we let $\sigma_i \coloneqq \sigma_{f_i}$. 
\subsubsection{Convergence of the DR algorithm}
We need the following lemma.

\memosolved{AT: The section title implies that $\sigma_{f_i}$ can be positive and negative, but it may be better to emphasize it.}

\begin{lemma}
\label{lemma:optimization_sigmaconvex_case}
    If $f_i:\H \to (-\infty,+\infty]$ is $\sigma_i$-convex  for all $i=1,\dots,m$, then 
    \begin{enumerate}[(i)]
        \item $\partial f_i$ is maximal $\sigma_i$-monotone.
        \item For any $\gamma >0$ such that $1+\gamma \sigma_i>0$, $\prox_{\gamma f_i}$ is equal to $ J_{\gamma \partial f_i}$, is single-valued and has full domain.
        \item $\sum_{i=1}^m f_i$ is $\sum_{i=1}^m \sigma_i$-convex.
        \item If $\sum_{i=1}^m \sigma_i\geq 0$, then $\zer \left( \sum_{i=1}^m \partial f_i \right) \subseteq \zer \left( \partial\left( \sum_{i=1}^m  f_i \right)  \right) = \argmin \left( \sum_{i=1}^m f_i \right) $.  
    \end{enumerate}
\end{lemma}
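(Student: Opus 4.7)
The plan is to reduce everything to the convex case by using the decomposition $f_i = g_i + \tfrac{\sigma_i}{2}\|\cdot\|^2$, where $g_i \coloneqq f_i - \tfrac{\sigma_i}{2}\|\cdot\|^2$ is proper, closed and convex by definition of $\sigma_i$-convexity. Since $\tfrac{\sigma_i}{2}\|\cdot\|^2$ is smooth with gradient $\sigma_i \Id$, the standard subdifferential sum rule gives $\partial f_i = \partial g_i + \sigma_i \Id$, and by Moreau–Rockafellar, $\partial g_i$ is maximal monotone. Hence $\partial f_i - \sigma_i \Id = \partial g_i$ is maximal monotone, and \cref{lemma:maximal_sigma_equivalent} yields part (i).

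For part (ii), \cref{lemma:maximal_single-valued-resolvent} immediately gives $\dom(J_{\gamma \partial f_i}) = \H$ under $1+\gamma\sigma_i > 0$. Single-valuedness follows from the same cocoercivity argument used in \cref{rem:cocoercive_and_singlevalued}: if $y_1, y_2 \in J_{\gamma \partial f_i}(x)$, then $\sigma_i$-monotonicity of $\partial f_i$ applied to the pairs $(y_j, (x-y_j)/\gamma)$ forces $(1/\gamma+\sigma_i)\|y_1-y_2\|^2 \le 0$. For the equality $\prox_{\gamma f_i} = J_{\gamma \partial f_i}$, the objective $w\mapsto f_i(w) + \tfrac{1}{2\gamma}\|w-x\|^2$ equals $g_i(w) + \tfrac{1}{2\gamma}\|w-x\|^2 + \tfrac{\sigma_i}{2}\|w\|^2$, which is proper, closed and $(\tfrac{1}{\gamma}+\sigma_i)$-strongly convex, so it has a unique minimizer; combined with the inclusion \eqref{eq:prox_subset_J} and single-valuedness of the resolvent, equality follows.

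Part (iii) is essentially bookkeeping: summing the decomposition $f_i = g_i + \tfrac{\sigma_i}{2}\|\cdot\|^2$ gives $\sum_i f_i - \tfrac{\sum_i\sigma_i}{2}\|\cdot\|^2 = \sum_i g_i$, which is convex as a sum of convex functions. For part (iv), when $\sum_i\sigma_i \ge 0$ the function $\sum_i f_i$ is convex by (iii), so its general subdifferential coincides with the classical convex one, and Fermat's rule gives $\zer(\partial(\sum_i f_i)) = \argmin(\sum_i f_i)$. For the inclusion $\zer(\sum_i \partial f_i) \subseteq \zer(\partial(\sum_i f_i))$, apply the smooth-plus-convex sum rule to each side: $\sum_i \partial f_i = \sum_i \partial g_i + (\sum_i\sigma_i)\Id$ and $\partial(\sum_i f_i) = \partial(\sum_i g_i) + (\sum_i\sigma_i)\Id$, so the inclusion reduces to the elementary convex-analytic fact $\sum_i \partial g_i \subseteq \partial(\sum_i g_i)$, which holds unconditionally from the definition of a convex subgradient.

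The only point needing a little care is part (ii): one must avoid circular reasoning between prox single-valuedness and resolvent single-valuedness, and one must verify that the proximal objective is actually proper (which is immediate since $f_i$ is proper and the quadratic is finite everywhere). Everything else is a routine translation of standard convex-analytic facts through the shift by $\tfrac{\sigma_i}{2}\|\cdot\|^2$, and I expect no substantive obstacle.
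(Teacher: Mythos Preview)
Your proposal is correct and follows essentially the same route as the paper, which simply defers to \cite[Lemmas~5.2 and~5.3]{DaoPhan2019}; those lemmas proceed via exactly the convex-plus-quadratic decomposition $f_i = g_i + \tfrac{\sigma_i}{2}\|\cdot\|^2$ you use. The only point the paper makes explicit that you leave implicit is the justification, via \cite[Proposition~1.107(ii)]{Mordukhovich2006}, that in a general Hilbert space the limiting subdifferential satisfies the smooth sum rule $\partial f_i = \partial g_i + \sigma_i\Id$ (equivalently, that $\hat\partial f_i = \partial f_i$); your phrase ``standard subdifferential sum rule'' covers this but without citation.
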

\begin{proof}
    The proofs follow by invoking \cite[Proposition 1.107(ii)]{Mordukhovich2006} to show that $\hat{\partial}f = \partial f$, and then using the same arguments as in the proofs of \cite[Lemmas 5.2 and 5.3]{DaoPhan2019}.
\smartqedmark \end{proof}
In view of \cref{lemma:optimization_sigmaconvex_case}(iv), we may obtain solutions of \eqref{eq:optimization} by considering the  problem
    \begin{equation}
        \text{Find~}x\in\H ~\text{such that } 0 \in \partial f_1(x) + \cdots + \partial f_m(x),
        \label{eq:optimization_as_inclusion}
    \end{equation}
whenever $\sum_{i=1}^m \sigma_i \geq 0$. In \cref{alg:dr_scaled_opt}, we present the Douglas-Rachford algorithm (\cref{alg:dr_scaled}) applied to \eqref{eq:optimization_as_inclusion}. This algorithm also appeared in \cite[Section 9.1]{Condat2023} but the setting considered in the said work involves only convex functions $f_1,\dots,f_m$.

\begin{algorithm}
    Input initial point $(x^0_1,\dots,x^0_{m-1})\in \H^{m-1}$ and parameters $\mu\in (0,2)$ and $\lambda,\lambda_1,\dots,\lambda_{m-1}\in (0,+\infty)$ with $\sum_{i=1}^{m-1}\lambda_i = 1$. \\
    For $k=1,2,\dots ,$
    \begin{equation*}
        \left[\begin{array}{rll}
            z_i^{k}&  \in \prox_{\frac{\lambda}{\lambda_i}f_i}(x_i^k), & (i=1,\dots,m-1) \\
			\ds y^{k}&   \in \prox_{\lambda f_m}\left( \sum_{i=1}^{m-1} \lambda_i (2z_i^k-x_i^k)\right) \\
			x_i^{k+1} & = x_i^k + \mu ( y^{k} - z_i^{k} ) & (i=1,\dots,m-1) .
        \end{array}\right.
		\end{equation*}
	\caption{Douglas-Rachford for  sum-of-$m$-functions optimization \eqref{eq:optimization}.}
	\label{alg:dr_scaled_opt}
\end{algorithm}

The convergence of \cref{alg:dr_scaled_opt} when the $f_i$'s are $\sigma_i$-convex is a direct consequence of \cref{thm:convergence_optimal}. This result can be viewed as an extension of \cite[Theorem 9.1]{Condat2023}, which only covers the convex case described in \cref{thm:dr_convergence_optimization}(ii). 

\begin{theorem}
\label{thm:dr_convergence_optimization}
 Let $f_i:\H\to (-\infty,+\infty]$ be $\sigma_i$-convex for each $i=1,\dots,m$, and suppose that $\zer\left( \partial f_1 + \cdots + \partial f_m\right)\neq \emptyset$. Let $\mu\in (0,2)$, $\lambda_1,\dots, \lambda_{m-1}\in (0,+\infty)$ with $\sum_{i=1}^{m-1}\lambda_i = 1$. Suppose that one of the following holds:
    \begin{enumerate}
        \item[(A)] (Nonconvex case). There exists $j\in \{1,\dots,m\}$ such that $\sigma_j<0$, $\sigma_m\neq 0$, $\sum_{i=1}^m \sigma_i >0$, and  $\bar{\lambda}^*$ is defined in \memosolved{AT: \eqref{eq:maximizing_stepsize} is more suitable?} \eqref{eq:maximizing_stepsize};
        \item[(B)] (Convex case). $\sigma_i\geq 0$ and $\bar{\lambda}^*=+\infty$.
    \end{enumerate}
    If $\lambda\in (0,\bar{\lambda}^*)$ and $\{ (x_1^k, \dots, x_{m-1}^k, z_1^k,\dots,z_{m-1}^k, y^k)\}$ is a sequence generated by \cref{alg:dr_scaled_opt} from an arbitrary initial point $(x_1^0,\dots,x_{m-1}^0)\in \H^{m-1}$, then the following hold:
    \begin{enumerate}[(i)]
        \item $\{x_i^k\}$, $\{ y^k\}$ and $\{ z_i^k\}$ are bounded sequences, where $i=1,\dots,m-1$. 
        \item $\norm{x_i^{k+1}-x_i^k}=o(1/\sqrt{k})$, $\norm{y^{k+1}-y^k}=o(1/\sqrt{k})$ and \\ 
        $\norm{z_i^{k+1}-z_i^k}=o(1/\sqrt{k})$ as $k\to \infty$, where $i=1,\dots,m-1$. 
        \item If (A) holds or (B) holds with $\sigma_j>0$ for some $j\in \{1,\dots,m\}$, then \eqref{eq:optimization} has a unique solution $\bar{z}$. Moreover, the sequences $\{z_i^k\}$ and $\{y^k\}$ converge strongly to $z^*$ for any $i=1,\dots, m-1$.
        \item If condition (B) holds with $\sigma_i=0$ for all $i=1,\dots,m$, then there exists $\bar{z}\in \argmin \left( \sum_{i=1}^m f_i \right)$ such that the sequences $\{z_i^k\}$ and $\{y^k\}$ converge weakly to $\bar{z}$ for any $i=1,\dots, m-1$.
    \end{enumerate}
\end{theorem}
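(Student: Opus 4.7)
The plan is to recognize Theorem~\ref{thm:dr_convergence_optimization} as a direct specialization of Theorem~\ref{thm:convergence_optimal} with $A_i = \partial f_i$, translated from the subdifferential language back into the functional language of problem~\eqref{eq:optimization}. First I would invoke \cref{lemma:optimization_sigmaconvex_case}(i) to conclude that each $\partial f_i$ is maximal $\sigma_i$-monotone, so that the operator-theoretic hypotheses of \cref{thm:convergence_optimal} are met. Next I would verify, under either step-size regime (A) or (B), that the inequalities $1+(\lambda/\lambda_i)\sigma_i>0$ (for $i=1,\dots,m-1$) and $1+\lambda\sigma_m>0$ hold; this was already established at the start of the proof of \cref{thm:general_convergence} and it is exactly what is needed, via \cref{lemma:optimization_sigmaconvex_case}(ii), to replace each resolvent $J_{(\lambda/\lambda_i)\partial f_i}$ and $J_{\lambda\partial f_m}$ by the corresponding single-valued proximal mapping. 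Consequently \cref{alg:dr_scaled_opt} is precisely \cref{alg:dr_scaled} applied to the operators $A_i=\partial f_i$, and its iterates coincide with a sequence $\{(\x^k,\z^k,\y^k)\}$ of the form treated in \cref{thm:convergence_optimal}.

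With this identification, parts (i) and (ii) follow immediately: boundedness of $\{x_i^k\}$ comes from boundedness of $\{\x^k\}$ in \cref{thm:convergence_optimal}(i), and boundedness of $\{z_i^k\},\{y^k\}$ comes from \cref{thm:general_convergence}(iii). For the rate in (ii), the identity $\x^{k+1}-\x^k=-(\bfId-\dr{\F}{\G})\x^k$ together with \cref{thm:convergence_optimal}(ii) yields $\|x_i^{k+1}-x_i^k\|=o(1/\sqrt{k})$, while the rates on $\{y^k\}$ and $\{z_i^k\}$ are part (iii) of \cref{thm:convergence_optimal}.

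For (iii), note that under (A) we have $\sum_{i=1}^{m}\sigma_i>0$, and under (B) with some $\sigma_j>0$ we again have $\sum_{i=1}^{m}\sigma_i>0$. In either case \cref{lemma:optimization_sigmaconvex_case}(iii) implies that $\sum_{i=1}^{m}f_i$ is strongly convex, hence problem~\eqref{eq:optimization} has a unique minimizer $\bar z$. Part (iv) of \cref{thm:convergence_optimal} gives strong convergence $z_i^k\to\bar{\z}$ and $y^k\to\bar{\z}$, where $\bar{\z}=\bfDelta_{m-1}(\bar z^{*})$ for some $\bar z^{*}\in\zer(\sum_{i=1}^{m}\partial f_i)$. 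By \cref{lemma:optimization_sigmaconvex_case}(iv), $\bar z^{*}\in\argmin\sum_{i=1}^{m}f_i$, and uniqueness forces $\bar z^{*}=\bar z$, giving the claim.

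The remaining part (iv) corresponds to the purely convex case $\sigma_i=0$ for all $i$. Here \cref{thm:convergence_optimal}(v) gives weak convergence of $\{\z^k\}$ and $\{\y^k\}$ to $\bar{\z}=\bfDelta_{m-1}(\bar z)$ for some $\bar z\in\zer(\sum_{i=1}^{m}\partial f_i)$, and \cref{lemma:optimization_sigmaconvex_case}(iv) places $\bar z$ in $\argmin\sum_{i=1}^{m}f_i$. I do not anticipate a genuine obstacle: the only delicate point is checking that the step-size condition really does put us in the regime where $\prox=J_{\partial\cdot}$ for the weakly convex $f_i$'s (so that \cref{alg:dr_scaled_opt} is unambiguously single-valued and matches the operator-level iteration), and this verification is essentially already carried out in the opening lines of the proof of \cref{thm:general_convergence}.
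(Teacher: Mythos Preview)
Your proposal is correct and matches the paper's approach exactly: the paper states, just before the theorem, that ``the convergence of \cref{alg:dr_scaled_opt} when the $f_i$'s are $\sigma_i$-convex is a direct consequence of \cref{thm:convergence_optimal}'' and does not give a separate proof. Your sketch fills in precisely the routine verifications needed---using \cref{lemma:optimization_sigmaconvex_case}(i,ii) to identify $\prox$ with the resolvent and reduce to \cref{thm:convergence_optimal}, and using parts (iii,iv) to pass from zeros of $\sum\partial f_i$ to minimizers of $\sum f_i$---so nothing is missing.
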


\subsubsection{Numerical example}
\begin{example}
    We consider the sparse low-rank matrix estimation problem in \cite{ParekhSelesnick2017} with an additional positive semidefinite constraint as follows:
        \begin{equation}
            \min _{x\in \Re^{p\times p}}~\underbrace{\delta_{\mathbb{S}_+^p}\frac{1}{2} (x)}_{F_1(x)} + \underbrace{\frac{1}{2}\norm{x-y}_F^2}_{F_2(x)} + \underbrace{\tau_0 \sum_{i=1}^{p} \phi (s_i(x);\omega_0) }_{F_3(x)}+\underbrace{ \tau_1 \sum_{i,j=1}^p \phi (x_{ij};\omega_1)}_{F_4(x)},\label{eq:experiment}
        \end{equation}
    where $\mathbb{S}_+^p$ denotes the set of $p\times p$ positive semidefinite matrices, $\norm{\cdot}_F$ denotes the Frobenius norm, $(s_1(x),\dots,s_p(x))$ denotes the  singular values of $x\in \Re^{p\times p}$, and $\phi$ is the penalty function given by 
        \[\phi(t;\omega) \coloneqq \frac{|t|}{1+\omega|t|/2}, \quad \omega\geq 0 ,\]
    which is a $-\omega$-convex function, i.e., $\omega$-weakly convex function. Note that  $F_i$ is $\sigma_i$-convex where $(\sigma_1,\sigma_2,\sigma_3,\sigma_4) = (0,1,-\tau_0\omega_0,-\tau_1\omega_1)$.
    
    We consider the covariance matrix estimation problem in \cite{RichardSavalleVayatis2012,ZhouXiuLuoKong2015}.\footnote{Scripts used to generate the data:
\url{https://github.com/ShenglongZhou/ADMM} (accessed October 18, 2025).} Given $p>0$, we generate a block-diagonal population covariance $\Sigma_0\in\mathbb{R}^{p\times p}$ with $K$ blocks of random sizes that sum to $p$. For block $b$, draw $v_b\in\mathbb{R}^{p_b}$ i.i.d.\ from $\mathrm{Unif}[-1,1]$ and set the block to $v_b v_b^\top$; hence $\mathrm{rank}(\Sigma_0)=K$. Then draw $n$ i.i.d.\ samples $X_\ell\sim\mathcal{N}(0,\Sigma_0)$ (implemented as $X_\ell=\Sigma_0^{1/2}z_\ell$, $z_\ell\sim\mathcal{N}(0,I_p)$), compute the sample mean $\bar X=\tfrac{1}{n}\sum_{\ell=1}^n X_\ell$, form the unbiased sample covariance $\Sigma_n=\tfrac{1}{n-1}\sum_{\ell=1}^n (X_\ell-\bar X)(X_\ell-\bar X)^\top$, and set $y= \Sigma_n$. In our experiments, we set $(K,n,p)=(5,50,500)$ to generate the problem data, and set $\tau_i = 0.1 $ and $\omega_i = 1$ for $i=0,1$.

    We test the performance of \cref{alg:dr_scaled_opt} with $(f_1,f_2,f_3,f_4) = (F_{a},F_b,F_c,F_d)$ for $(a,b,c,d)\in \{ (1,2,3,4), (1,4,3,2),(1,2,4,3)\}$,\footnote{We also tested other permutations with the last block $F_d\neq F_1$, since our theory guarantees convergence when the last component is strongly or
weakly convex. Empirically the last block largely dictates
performance: for any fixed $F_d\neq F_1$, permuting the remaining
$\{F_1,F_2,F_3,F_4\}\setminus\{F_d\}$ produced indistinguishable accuracy
and iteration counts across all instances.} and we choose stepsize $\lambda$ according to \cref{prop:optimaldelta}(i). The algorithm is terminated when the maximum blockwise mean-squared residual of $\z^k$ is below $10^{-6}$, where the residual mapping is defined by the generalized gradient mapping (c.f. \cite[Definition 10.5]{Beck17})
\ifdefined\submit
\begin{align*}
    \text{Res} ( \z^k) & = \frac{1}{\lambda}\bfLambda \left( \z^k - J_{\lambda \G}^{\bfLambda}(\z^k - \lambda \bfLambda^{-1} \F (\z^k))\right) \\ 
    & = \frac{1}{\lambda} (\lambda_1 (z_1^k-y^k), \ldots, \lambda_{m-1}(z_{m-1}^k-y^k))  ,
\end{align*}
\else 
    \[ \text{Res} ( \z^k) = \frac{1}{\lambda}\bfLambda \left( \z^k - J_{\lambda \G}^{\bfLambda}(\z^k - \lambda \bfLambda^{-1} \F (\z^k))\right) = \frac{1}{\lambda} (\lambda_1 (z_1^k-y^k), \ldots, \lambda_{m-1}(z_{m-1}^k-y^k))  ,\]
\fi 
    i.e., we terminate when $\norm{\text{Res}(\z^k)}_{\infty,F}^2<10^{-6}$, where $\norm{\y}_{\infty,F}^2 \coloneqq \ifdefined\submit \max_i \else \max_{i=1,\dots,m-1} \fi  \norm{y_i}_{F,p}^2 $ for any $\y = (y_1,\dots,y_{m-1})\in (\Re^{p\times p})^{m-1}$, and $\norm{y}_{F,p}^2 \coloneqq \frac{1}{p^2}\sum_{i,j=1}^p y_{ij}^2$, for any $y\in \Re^{p\times p}$. The $o(1/\sqrt{k})$ convergence rate established in \cref{thm:dr_convergence_optimization}(ii) is illustrated in \cref{fig:convergencerate}.

\begin{figure}[t]
    \centering
    \includegraphics[width=.7\linewidth]{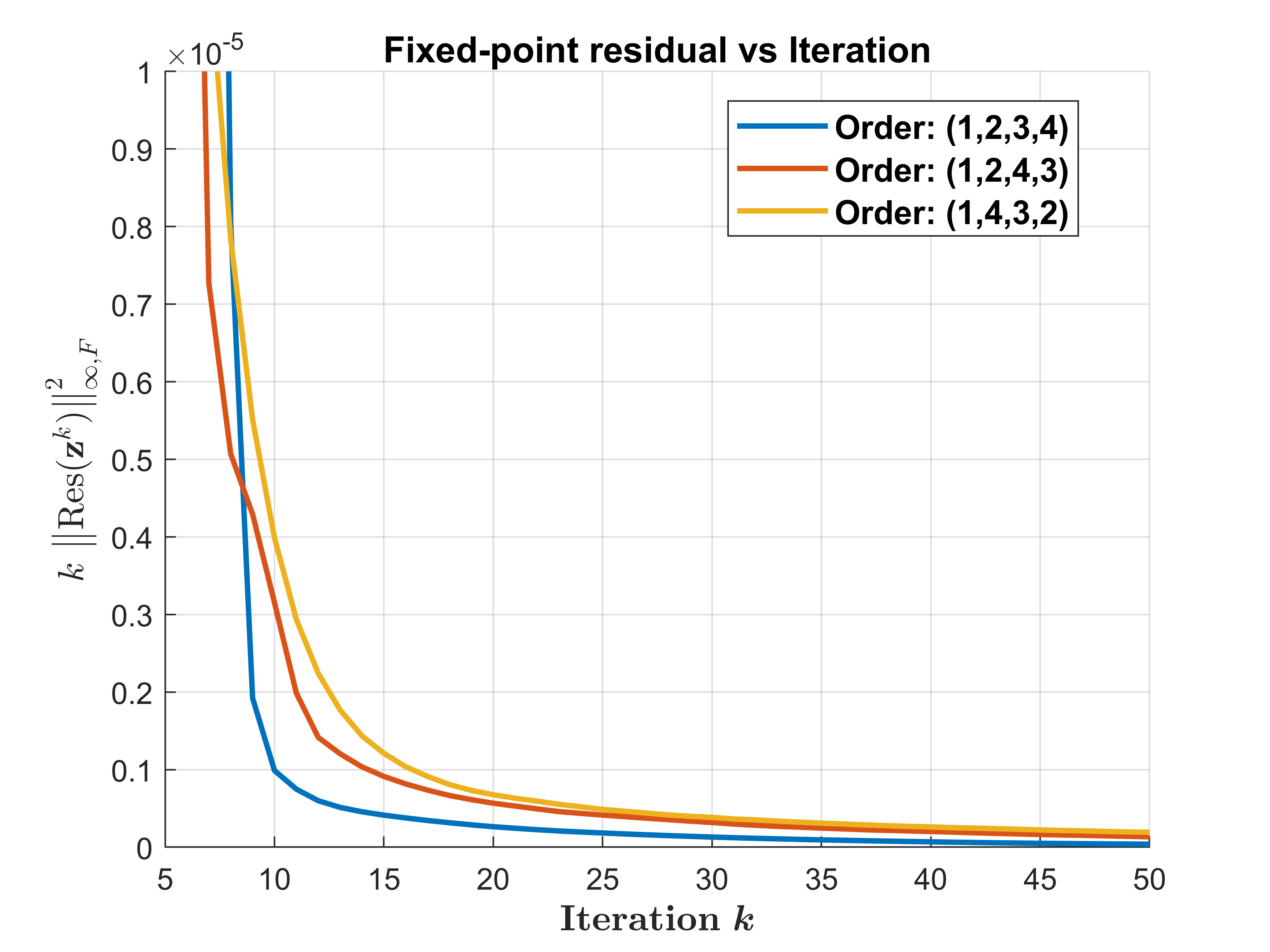}
    \caption{Convergence of $k\norm{\mathrm{Res}(\z^k)}_{\infty,F}^2$ 
    to zero for the orderings $(1,2,3,4)$, $(1,2,4,3)$, and $(1,4,3,2)$, with equal weights $\lambda_1=\lambda_2=\lambda_3=\frac{1}{3}$.}
    \label{fig:convergencerate}
\end{figure}

    For each ordering, we swept the Douglas–Rachford mixing weights $(\lambda_1,\lambda_2,\lambda_3)$  on the simplex for random synthetic instances. We report the average iteration count and mean squared error of $y^k$, i.e., $\text{MSE} (y^k) \coloneqq \norm{y^k-\Sigma_0}^2_{F,p} $, over 20 random instances. \cref{tab:best-metrics-argmin} reports, for each ordering, the minimum mean MSE and the minimum mean iteration count, together with all weight triples that attain those minima. The heatmaps in \cref{fig:mse-iter-tabular} 
    show that both the ordering of the functions and the weights
$(\lambda_1,\lambda_2,\lambda_3)$ affect performance, with the impact being more
pronounced on speed than on accuracy. When the strongly convex block $F_2$ is placed among the first $m\!-\!1$ functions, assigning it a
moderate weight tends to yield faster convergence while preserving accuracy.
In contrast, placing the strongly convex block $F_2$ last, together with a
small weight on the merely convex block ($F_1$), consistently delivers both
accurate solutions and fast convergence.
\end{example}

\begin{figure}[H]
  \centering
  \begin{tabular}{@{}c@{}} 
    \includegraphics[width=.8\linewidth]{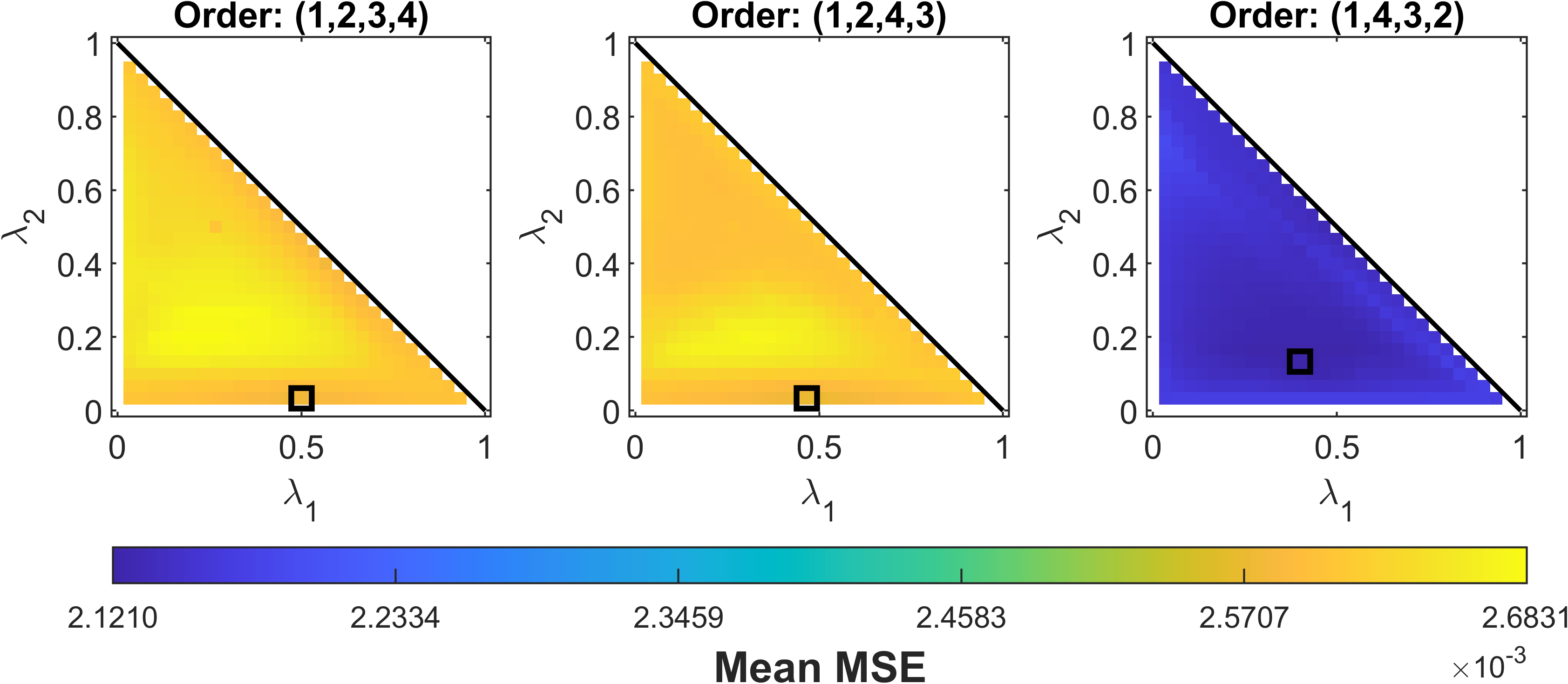} \\
    [2em]
    \includegraphics[width=.8\linewidth]{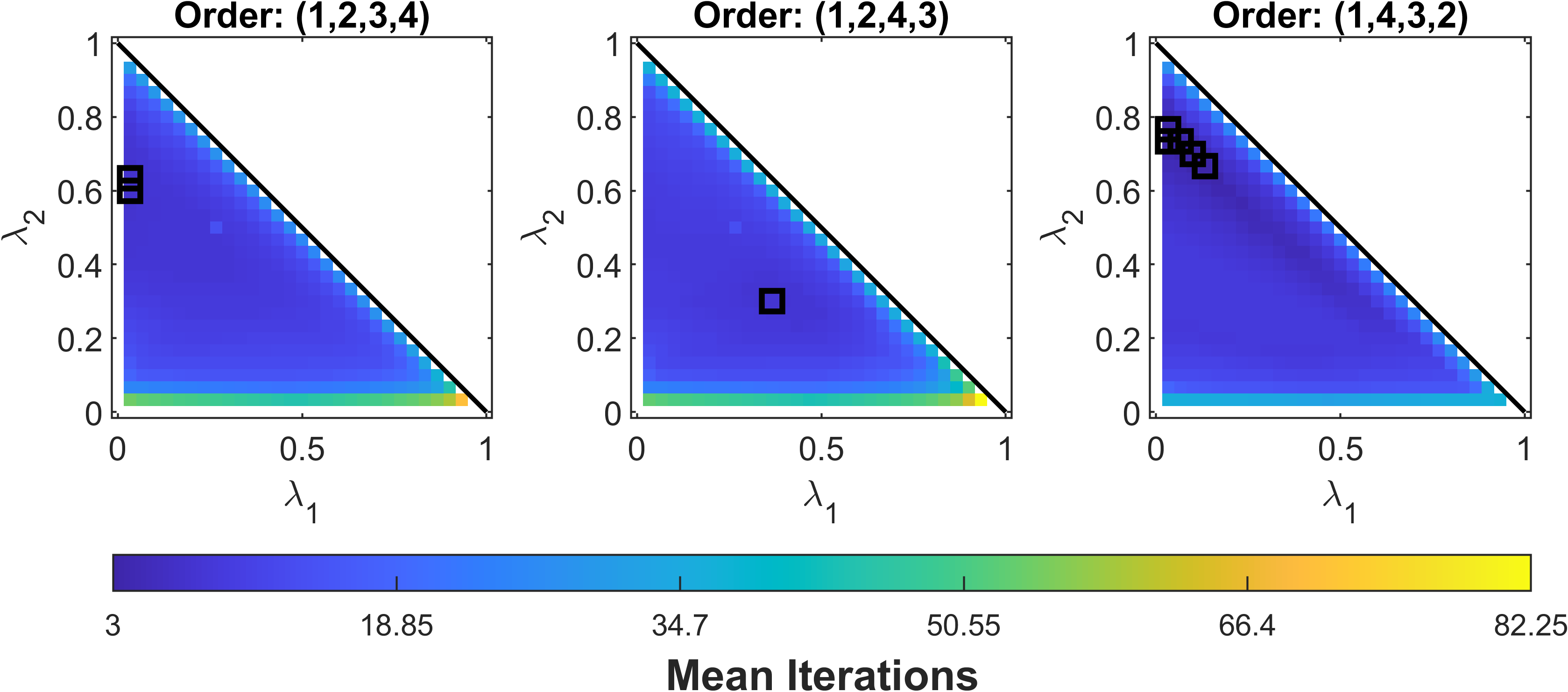}
  \end{tabular}
  \caption{Heatmaps over $(\lambda_1,\lambda_2)$ (with $\lambda_3=1-\lambda_1-\lambda_2$) for three orderings. 
Color scales are shared across columns for each metric. 
Black squares indicate weight triples achieving the minimum.}
  \label{fig:mse-iter-tabular}
\end{figure}

\begin{table}[H]
\centering
\small
\setlength{\tabcolsep}{6pt}
\renewcommand{\arraystretch}{1.15}
\caption{Minimum (mean) MSE and minimum (mean) iteration count for each ordering, listing all weight triples that achieve each minimum.}
\begin{tabular}{@{}%
l
c
>{\raggedright\arraybackslash}p{0.22\linewidth}
@{\hspace{6pt}} 
c
>{\raggedright\arraybackslash}p{0.46\linewidth}
@{}}
\toprule
\multirow{2}{*}{Ordering}
  & \multicolumn{2}{c}{MSE}
  & \multicolumn{2}{c}{Iterations} \\
\cmidrule(lr){2-3}\cmidrule(lr){4-5}
 & Min & Argmin $(\lambda_1,\lambda_2,\lambda_3)$
 & Min & Argmin $(\lambda_1,\lambda_2,\lambda_3)$ \\
\midrule
1--2--3--4
 & $2.579\times10^{-3}$
 & (0.500, 0.033, 0.467)
 & $7.05$
 & (0.033, 0.600, 0.367), (0.033, 0.633, 0.333) \\
1--2--4--3
 & $2.573\times10^{-3}$
 & (0.467, 0.033, 0.500)
 & $7.70$
 & (0.367, 0.300, 0.333) \\
 1--4--3--2
 & $2.121\times10^{-3}$
 & (0.400, 0.133, 0.467)
 & $3.00$
 & (0.033, 0.733, 0.233), (0.033, 0.767, 0.200), (0.067, 0.733, 0.200), (0.100, 0.700, 0.200), (0.133, 0.667, 0.200) \\
\bottomrule
\end{tabular}
\label{tab:best-metrics-argmin}
\end{table}

\subsection{Nonconvex optimization for finite-dimensional Hilbert spaces under Lipschitz gradient conditions}\label{sec:nonconvexopt_finitedimensional}

The second setting we consider involves an arbitrary proper closed function $f_m$ (\textit{i.e.,} not necessarily $\sigma_m$-convex), but we additionally assume that for each $i=1,\dots,m-1$, $f_i$ is $L_{f_i}$-smooth. The case $m=2$ was previously studied in \cite{li2016douglas,ThemelisPatrinos2018}. For simplicity, we denote $L_i \coloneqq L_{f_i}$. In this section, we also assume that $\H$ is finite-dimensional.

\begin{lemma}
\label{lemma:optimization_lsmooth}
    Let $f_i:\H \to (-\infty,+\infty]$ be an $L_i$-smooth function for all $i=1,\dots,m-1$ and $f_m$ is a proper closed function. \memosolved{AT: Better to write that $f_m$ is an arbitrary proper closed function. }
    Then the following hold:
    \memosolved{AT: Just a question. (i) is equivalent to saying that $f_i$ is $\sigma_i-convex$?}
    \begin{enumerate}[(i)]
        \item For any $i=1,\dots,m-1$, $\partial f_i=\nabla f_i$ is maximal $\sigma_i$-monotone for some $\sigma_i \in [-L_i,L_i]$. Thus, $f_i$ is $\sigma_i$-convex for some $\sigma_i \in [-L_i,L_i]$. 
        \item For any $i=1,\dots,m-1$ and $\gamma >0$ such that $1-\gamma L_i>0$, $\prox_{\gamma f_i}$ is equal to $ J_{\gamma \partial f_i}$, is single-valued and has full domain.
        \item  $\argmin \left( \sum_{i=1}^m f_i \right) \subseteq \zer \left( \partial \left(\sum_{i=1}^m f_i \right)\right) = \zer \left(\sum_{i=1}^m \partial  f_i \right)  $.  
    \end{enumerate}
    \memosolved{AT: The above (i) and (ii) are for $f_i$, $i=1,\ldots, m-1$?}
\end{lemma}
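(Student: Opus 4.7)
The plan is to derive each part from results already in the excerpt, reducing everything to the $\sigma$-convex case (\cref{lemma:optimization_sigmaconvex_case}) plus a standard limiting-subdifferential sum rule.

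For part (i), I would first apply the descent lemma \eqref{eq:descentlemma} to each $L_i$-smooth $f_i$. A one-line completion-of-squares calculation then shows that $f_i + \tfrac{L_i}{2}\|\cdot\|^2$ is convex, so $f_i$ is $\sigma_i$-convex with $\sigma_i := -L_i \in [-L_i,L_i]$. Since $f_i$ is $C^1$ on $\H$, the general subdifferential reduces to $\partial f_i = \{\nabla f_i\}$, and \cref{lemma:optimization_sigmaconvex_case}(i) delivers the maximal $\sigma_i$-monotonicity. For part (ii) I would keep the same $\sigma_i = -L_i$: the hypothesis $1-\gamma L_i > 0$ is exactly $1+\gamma\sigma_i>0$, so \cref{lemma:optimization_sigmaconvex_case}(ii) directly yields $\prox_{\gamma f_i}=J_{\gamma\partial f_i}$ with the required single-valuedness and full domain.

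For part (iii), the inclusion $\argmin(\sum_i f_i) \subseteq \zer(\partial(\sum_i f_i))$ is Fermat's rule: any minimizer $\bar{x}$ satisfies $0\in \hat{\partial}(\sum_i f_i)(\bar{x})\subseteq \partial(\sum_i f_i)(\bar{x})$ directly from the definition \eqref{eq:generalsubdiff}. The equality $\zer(\partial(\sum_i f_i))=\zer(\sum_i \partial f_i)$ would reduce to a limiting-subdifferential sum rule. Since $f_1,\ldots,f_{m-1}$ are $C^1$ (hence strictly differentiable), $f_m$ is proper and lower semicontinuous, and $\H$ is finite-dimensional, iteratively applying the sum rule (e.g.\ \cite[Proposition 1.107(ii)]{Mordukhovich2006} or \cite[Exercise 10.10]{RW98}) would yield
\begin{equation*}
\partial\!\left(\sum_{i=1}^m f_i\right)(x) = \sum_{i=1}^{m-1}\nabla f_i(x)+\partial f_m(x) = \sum_{i=1}^m \partial f_i(x),
\end{equation*}
which gives the desired equality.

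The only real obstacle is invoking the correct version of the sum rule; the $C^1$ regularity of all but one summand together with the finite-dimensional setting makes this a clean application rather than a delicate calculation (no qualification condition or coderivative analysis is required). Everything else is essentially bookkeeping, transferring the $\sigma$-convex theory of \cref{lemma:optimization_sigmaconvex_case} through the choice $\sigma_i=-L_i$ given by the descent lemma.
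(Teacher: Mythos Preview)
Your proposal is correct and follows essentially the same route as the paper: the paper likewise derives (i) from the descent lemma \eqref{eq:descentlemma} (yielding maximal $(-L_i)$-monotonicity), obtains (ii) from \cref{lemma:optimization_sigmaconvex_case}(ii) via the same identification $\sigma_i=-L_i$, and proves (iii) by citing Fermat's rule and the smooth-plus-lsc sum rule (specifically \cite[Theorem~10.1 and Exercise~8.8(c)]{RW98} rather than \cite[Exercise~10.10]{RW98}).
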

\begin{proof}
    Part (i) directly follows from \eqref{eq:descentlemma}, from where we also see that $\partial f_i$ is maximal $(-L_i)$-monotone, and so part (ii) follows by \cref{lemma:optimization_sigmaconvex_case}(ii). 
    \ifdefined\submit
    Part (iii) follows from  \cite[Theorem 10.1 and Exercise 8.8(c)]{RW98}.
    \else 
    The inclusion in (iii) is a consequence of \cite[Theorem 10.1]{RW98}, and the last equality holds by \cite[Exercise 8.8(c)]{RW98}.
    \fi 
\smartqedmark \end{proof}

From \cref{lemma:optimization_lsmooth}(iii), solving \eqref{eq:optimization_as_inclusion} provides candidate solutions to \eqref{eq:optimization}. Since $f_m$ may not be $\sigma_m$-convex, $\prox_{\gamma f_m}$ may differ from $J_{\gamma \partial f_m}$. However, by \eqref{eq:prox_subset_J}, \cref{alg:dr_scaled_opt} is a specific instance of \cref{alg:dr_scaled} for \eqref{eq:optimization_as_inclusion}. From \cref{lemma:optimization_lsmooth}(ii), $\prox_{\gamma f_i} = J_{\gamma f_i}$ for $i=1,\dots,m-1$ if $\gamma<1/L_i$. While $\prox_{\gamma f_m}$ may not equal $J_{\gamma \partial f_m}$, it has full domain and compact values under a coercivity assumption. 
\ifdefined\submit
The proof follows \cite[Theorem 3.18]{AlcantaraLeeTakeda2024} and is omitted.
\else
\fi 

\begin{lemma}
\label{lemma:prox_fm}
    Let $f_i:\H \to (-\infty,+\infty]$ be an $L_i$-smooth function for all $i=1,\dots,m-1$, and let $f_m$ be a proper closed function. If $\sum_{i=1}^m f_i$ is coercive and $\gamma < \left( \sum_{i=1}^{m-1} L_i\right)^{-1}$, then $\prox_{\gamma f_m}$ has a full domain and is compact-valued. 
\end{lemma}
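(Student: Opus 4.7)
The plan is to show, for arbitrary fixed $x\in\H$, that the function
\[ g_x(w) \coloneqq f_m(w) + \tfrac{1}{2\gamma}\|w-x\|^2 \]
is proper, lower semicontinuous, and coercive on the finite-dimensional space $\H$. Since $\prox_{\gamma f_m}(x)$ is the set of minimizers of $g_x$, standard variational arguments in finite dimension will then give that this set is nonempty and compact, which is exactly what we need.

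Properness and lower semicontinuity of $g_x$ are inherited from $f_m$ since the added quadratic is continuous and real-valued. The main step is coercivity. To obtain it, I would write
\[ f_m(w) \;=\; \sum_{i=1}^m f_i(w) \;-\; \sum_{i=1}^{m-1} f_i(w), \]
and use the descent lemma \eqref{eq:descentlemma} at a fixed reference point (say $0$) to bound each smooth summand from above:
\[ f_i(w) \;\leq\; f_i(0) + \lla \nabla f_i(0), w \rla + \tfrac{L_i}{2}\|w\|^2, \qquad i=1,\dots,m-1. \]
Substituting into $g_x$ and expanding $\frac{1}{2\gamma}\|w-x\|^2 = \frac{1}{2\gamma}\|w\|^2 - \frac{1}{\gamma}\lla w,x\rla + \frac{1}{2\gamma}\|x\|^2$, I obtain a lower bound of the form
\[ g_x(w) \;\geq\; \sum_{i=1}^m f_i(w) \;+\; \left(\tfrac{1}{2\gamma} - \tfrac{1}{2}\sum_{i=1}^{m-1} L_i\right)\|w\|^2 \;+\; \lla a, w \rla \;+\; c, \]
for some $a\in\H$ and $c\in\Re$ depending on $x$ and the reference point. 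The stepsize assumption $\gamma < \bigl(\sum_{i=1}^{m-1}L_i\bigr)^{-1}$ makes the quadratic coefficient strictly positive; combined with the coercivity of $\sum_{i=1}^m f_i$, this forces $g_x(w)\to +\infty$ as $\|w\|\to \infty$.

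Finally, since $\H$ is finite-dimensional and $g_x$ is proper, lsc, and coercive, its sublevel sets are nonempty and compact, so $\argmin g_x$ is nonempty and compact. Equivalently, $\prox_{\gamma f_m}(x)$ is nonempty and compact for every $x\in\H$, proving that $\dom(\prox_{\gamma f_m}) = \H$ and that $\prox_{\gamma f_m}$ is compact-valued. The only real (and mild) obstacle is the bookkeeping between the negative curvature $-\tfrac{L_i}{2}\|w\|^2$ produced by the descent lemma and the positive curvature $\tfrac{1}{2\gamma}\|w\|^2$ from the Moreau quadratic; the prescribed bound on $\gamma$ is tailored to exactly dominate the former.
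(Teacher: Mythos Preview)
Your proposal is correct and follows essentially the same route as the paper: both use the descent lemma \eqref{eq:descentlemma} to dominate $-\sum_{i=1}^{m-1}f_i(w)$ by a negative quadratic of curvature $\tfrac{1}{2}\sum_{i=1}^{m-1}L_i$, so that the stepsize bound $\gamma<(\sum_{i=1}^{m-1}L_i)^{-1}$ makes the Moreau objective coercive, and then invoke finite-dimensionality. The only cosmetic difference is that the paper packages the conclusion as prox-boundedness of $f_m$ and cites \cite[Exercise~1.24 and Theorem~1.25]{Rockafellar1970}, whereas you carry out the level-set argument explicitly; your version is self-contained and arguably clearer.
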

\ifdefined\submit 
\else 
\begin{proof}
    We argue as in the proof of \cite[Theorem 3.18]{AlcantaraLeeTakeda2024}: Using \eqref{eq:descentlemma}, we can show that 
    \begin{align*}
        \min  \sum_{i=1}^m f_i & \leq  \sum_{i=1}^m f_i(x) \leq \sum_{i=1}^{m-1} \left( f_i(\bar{x}) + \lla \nabla f_i(\bar{x}),x-\bar{x}\rla\right)  + \frac{1}{2}\sum_{i=1}^{m-1} L_i\norm{x-\bar{x}}^2 + f_m(x),  
    \end{align*}
    where $\bar{x}\in \H$ is arbitrary and the minimum on the left-most side is finite by \cite[Theorem 1.9]{Rockafellar1970}, noting the finite-dimensionality of $\H$ and coercivity hypothesis. It follows that $c+ \inner{y}{x} +  \frac{ \sum_{i=1}^{m-1} L_i}{2}\norm{x}^2+f_m(x) \geq 0$ for some $c\in \Re$ and $y\in \H$. The claim now follows from \cite[Exercise 1.24 and Theorem 1.25]{Rockafellar1970}.
\smartqedmark \end{proof}
\fi 

\ifdefined\submit
\else
Under the assumptions of \cref{lemma:prox_fm}, we prove the subsequential convergence of \cref{alg:dr_scaled_opt}.
\fi 

\begin{theorem}
\label{thm:dr_convergence_optimization_subsequential}
Let $\mu\in (0,2)$, and $\lambda,\lambda_1,\dots,\lambda_{m-1} \in (0,+\infty)$ with $\sum_{i=1}^{m-1}\lambda_i=1$. For each $i=1,\dots,m-1$, denote   
    \begin{equation}
        \bar{\gamma}_i \coloneqq \begin{cases}
        \frac{1}{L_i} & \text{if } -2\sigma_i < (2-\mu)L_i \\
        -\frac{1}{\sigma_i}\left( 1-\frac{\mu}{2}\right) & \text{otherwise}
    \end{cases},
    \label{eq:bargamma_i}
    \end{equation} 
where   $\sigma_i \in [-L_i,0]$ \memosolved{AT: $0$ division occurs in  \eqref{eq:bargamma_i}?} such that $f_i - \frac{\sigma_i}{2}\norm{\cdot}^2$ is convex 
\memosolved{AT: this implies that $f_i$ is $\sigma_i$-convex? Should we write the assumption first in this theorem?} (which exists by \cref{lemma:optimization_lsmooth}(i)). Suppose the hypotheses of \cref{lemma:prox_fm} hold. If $\{(x_1^k,\dots, x_{m-1}^k,z_1^k,\dots,z_{m-1}^k,y^k\}$ is generated by \cref{alg:dr_scaled_opt} with $\frac{\lambda}{\lambda_i} \in (0,\bar{\gamma}_i)$ for all $i=1,\dots,m-1$, then 
    \begin{enumerate}[(i)]
        \item $\{(x_1^k,\dots, x_{m-1}^k,z_1^k,\dots,z_{m-1}^k,y^k)\}$ is bounded; 
        \item $z_i^*,y^* \in \zer \left( \sum_{i=1}^m \partial f_i\right)$ if $z_i^*$ and $y^*$ are accumulation points of $\{z_i^k\}$ and $\{y^k\}$.
    \end{enumerate}
\end{theorem}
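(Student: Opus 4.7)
The plan is to follow the standard Lyapunov–function strategy for nonconvex Douglas--Rachford analysis (cf.\ \cite{li2016douglas,ThemelisPatrinos2018}), adapted to the weighted product--space reformulation and to $m$ operators. First, I would record the three optimality conditions provided by the iteration: $\nabla f_i(z_i^k)=(\lambda_i/\lambda)(x_i^k-z_i^k)$ for $i=1,\dots,m-1$, the inclusion $\frac{1}{\lambda}\bigl(\sum_j\lambda_j(2z_j^k-x_j^k)-y^k\bigr)\in\partial f_m(y^k)$ from the nonconvex prox step, and $x_i^{k+1}-x_i^k=\mu(y^k-z_i^k)$. These identities let me eliminate $x_i^k$ everywhere and express the algorithm in terms of $(y^k,z_1^k,\dots,z_{m-1}^k)$. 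Summing the first set of identities against $\lambda_i$ and adding to the third yields a telescoping relation $\frac{1}{\lambda}(s^k-y^k)\in \sum_{i=1}^{m-1}\nabla f_i(z_i^k) + \partial f_m(y^k) + \lambda^{-1}(\text{terms vanishing in the limit})$, where $s^k=\sum_i\lambda_i z_i^k$; this is the identity I will use in the final subsequential limit.

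Next I would define a merit function of the form
\[
\Phi^k \;\coloneqq\; \sum_{i=1}^{m-1}\lambda_i f_i(z_i^k) + f_m(y^k) + \frac{1}{2\lambda}\sum_{i=1}^{m-1}\lambda_i \bigl(c_i\|y^k-z_i^k\|^2 - d_i\|z_i^k-x_i^k\|^2\bigr),
\]
with constants $c_i,d_i$ to be chosen. To prove the descent $\Phi^{k+1}-\Phi^k \le -\kappa\sum_i\|y^k-z_i^k\|^2$, I would combine (a) the prox inequality for $f_m$ comparing $y^{k+1}$ to the test point $y^k$ (valid even when $f_m$ is nonconvex, by definition of $\prox_{\lambda f_m}$), (b) the $\sigma_i$-convexity inequality $f_i(x_i^{k+1})\ge f_i(z_i^k)+\langle\nabla f_i(z_i^k),x_i^{k+1}-z_i^k\rangle+\tfrac{\sigma_i}{2}\|x_i^{k+1}-z_i^k\|^2$, and (c) the descent lemma \eqref{eq:descentlemma}. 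The constants $c_i,d_i$ are chosen so that, after substituting $x_i^{k+1}-x_i^k=\mu(y^k-z_i^k)$ and simplifying, the residual quadratic form in $\|y^k-z_i^k\|^2$ has a strictly negative coefficient. A direct computation shows that this coefficient is controlled precisely by $(\lambda_i/\lambda) + \sigma_i/(2-\mu) - L_i$ (or similar), which is positive exactly when $\lambda/\lambda_i<\bar\gamma_i$ with $\bar\gamma_i$ as in \eqref{eq:bargamma_i}. This step—pinning down the constants so that the stepsize threshold matches \eqref{eq:bargamma_i} exactly—is the main technical obstacle.

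With sufficient decrease in hand, I would argue that $\Phi^k$ is bounded below: using $f_i(z_i^k)\ge f_i(x_i^k)+\langle\nabla f_i(x_i^k),z_i^k-x_i^k\rangle-\tfrac{L_i}{2}\|z_i^k-x_i^k\|^2$ and grouping the quadratic error terms, one can estimate
\[
\Phi^k \;\ge\; \sum_{i=1}^{m-1}\lambda_i f_i(y^k) + f_m(y^k) - C\sum_{i=1}^{m-1}\|y^k-z_i^k\|^2,
\]
for some $C\ge 0$, and then absorb the quadratic remainder into the monotone decrease to obtain $\Phi^k\ge \sum_{i=1}^m f_i(y^k) - C'$ for large $k$. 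Combined with coercivity of $\sum_i f_i$ (assumed in \cref{lemma:prox_fm}), this proves $\{y^k\}$ is bounded; then $\|y^k-z_i^k\|\to 0$ (summable, from telescoping the descent) yields boundedness of $\{z_i^k\}$, and $x_i^{k+1}-x_i^k=\mu(y^k-z_i^k)\to 0$ together with $x_i^k = z_i^k + (\lambda/\lambda_i)\nabla f_i(z_i^k)$ (where $\nabla f_i$ is Lipschitz on bounded sets) gives boundedness of $\{x_i^k\}$. This establishes (i), and also $y^k-z_i^k\to 0$ and $z_i^{k+1}-z_i^k\to 0$ as byproducts.

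Finally, for (ii), let $y^*$ be an accumulation point of $\{y^k\}$ along a subsequence $k_j$. Since $y^k-z_i^k\to 0$, we also have $z_i^{k_j}\to y^*$ for every $i$. From $\nabla f_i(z_i^{k_j})=(\lambda_i/\lambda)(x_i^{k_j}-z_i^{k_j})$ and the continuity of $\nabla f_i$, the quantity $\frac{1}{\lambda}(s^{k_j}-y^{k_j})-\sum_{i=1}^{m-1}\nabla f_i(z_i^{k_j})$ belongs to $\partial f_m(y^{k_j})$ and converges to $-\sum_{i=1}^{m-1}\nabla f_i(y^*)$. To pass to the limit in $\partial f_m$, I need $f_m(y^{k_j})\to f_m(y^*)$; this follows from lower semicontinuity plus the prox inequality $f_m(y^{k_j})+\frac{1}{2\lambda}\|y^{k_j}-u^{k_j}\|^2\le f_m(y^*)+\frac{1}{2\lambda}\|y^*-u^{k_j}\|^2$ with $u^{k_j}=\sum_i\lambda_i(2z_i^{k_j}-x_i^{k_j})$ (which also converges, since all summands are bounded and $2z_i^{k_j}-x_i^{k_j}-y^*\to 0$). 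The closedness of the graph of $\partial f_m$ at convergent values of $f_m$ then yields $-\sum_{i=1}^{m-1}\nabla f_i(y^*)\in\partial f_m(y^*)$, i.e., $y^*\in\zer(\sum_i\partial f_i)$, and the same for any accumulation point of $\{z_i^k\}$.
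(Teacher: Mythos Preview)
Your outline is in the right spirit—a Lyapunov/merit-function argument driven by the prox inequality for $f_m$, smoothness of the $f_i$'s, and coercivity—but it differs from the paper's proof in several substantive respects.

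\textbf{Merit function and decrease term.} The paper does not introduce free constants $c_i,d_i$. Instead it observes that the $y$-step is exactly
\[
y^k \in \argmin_{z}\ \sum_{i=1}^{m-1}\Bigl(f_i(z_i^k)+\langle\nabla f_i(z_i^k),z-z_i^k\rangle+\tfrac{1}{2\gamma_i}\|z-z_i^k\|^2\Bigr)+f_m(z),
\]
and takes the merit function $V_k$ to be the value of this surrogate at $y^k$. Because $V_{k+1}$ is a minimum, it is bounded above by the surrogate evaluated at the test point $z=y^k$ with the updated anchors $z_i^{k+1}$; subtracting gives a descent inequality with no tuning. Crucially, the resulting decrease is in $\sum_i\|z_i^{k+1}-z_i^k\|^2$, not in $\sum_i\|y^k-z_i^k\|^2$ as you propose. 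Your DRE-style function $\Phi^k$ with the $-d_i\|z_i^k-x_i^k\|^2$ term lacks the linearization term $\langle\nabla f_i(z_i^k),y^k-z_i^k\rangle$ that makes $V_k$ the natural envelope; this is why you end up needing to tune $c_i,d_i$.

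\textbf{The convexification device.} To obtain the sharp two-case threshold $\bar\gamma_i$ in \eqref{eq:bargamma_i}, the paper rewrites everything in terms of $\tilde f_i\coloneqq f_i-\tfrac{\sigma_i}{2}\|\cdot\|^2$, which is convex and $(L_i-\sigma_i)$-smooth, and then uses the \emph{cocoercivity} inequalities \eqref{eq:descentlemma_convex}--\eqref{eq:descentlemma_convex2} (not just the descent lemma). This yields
\[
V_k-V_{k+1}\ \ge\ \sum_{i=1}^{m-1}\bigl(a_i(\gamma_i)\|\Delta g_i^k\|^2+b_i(\gamma_i)\|\Delta z_i^k\|^2\bigr),
\]
with $\Delta g_i^k=\nabla\tilde f_i(z_i^{k+1})-\nabla\tilde f_i(z_i^k)$, and the two regimes of $\bar\gamma_i$ arise from whether $a_i(\gamma_i)\ge 0$ (keep the gradient term) or $a_i(\gamma_i)<0$ (absorb it via $\|\Delta g_i^k\|\le(L_i-\sigma_i)\|\Delta z_i^k\|$). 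Your plan, relying only on $\sigma_i$-convexity and the descent lemma, would most likely recover the weaker branch $\bar\gamma_i=-\tfrac{1}{\sigma_i}(1-\tfrac{\mu}{2})$ but not the sharper $\bar\gamma_i=1/L_i$ when $-2\sigma_i<(2-\mu)L_i$; the convexification plus cocoercivity is precisely what buys the extra range. This is the step you flag as ``the main technical obstacle,'' and the paper's resolution is this convexification.

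\textbf{Part (ii).} Your subsequential-limit argument (continuity of $\nabla f_i$, prox inequality to force $f_m(y^{k_j})\to f_m(y^*)$, then closedness of $\gra(\partial f_m)$) is essentially what the paper does, though it defers the details to \cite{AlcantaraLeeTakeda2024}.
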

\begin{proof}
    From the $z$-step in \cref{alg:dr_scaled_opt}, we have $x_i^k = z_i^k + \frac{\lambda}{\lambda_i}\nabla f_i(z_i^k) = z_i^k + \gamma_i \nabla f_i(z_i^k)$, where $\gamma_i \coloneqq \frac{\lambda}{\lambda_i}$ for all $i=1,\dots,m-1$.
    \ifdefined\submit 
    With this and keeping in mind that $\sum_{i=1}^{m-1}\lambda
    _i=1$, it is not difficult to calculate that 
    \begin{align}
        &  \textstyle  \prox_{\lambda f_m} \left( \sum_{i=1}^{m-1} \lambda_i (2z_i^k-x_i^k)\right)  \notag  \\
           &\textstyle   = \argmin _{z\in \H}   \sum_{i=1}^{m-1} \left( f_i(z_i^k) + \inner{\nabla f_i(z_i^k)}{z-z_i^k)} + \frac{1}{2\gamma_i}\norm{z-z_i^k}^2\right)+ f_m(z). \label{eq:prox=}
    \end{align}
    \else 
     Thus, 
    \begin{align}
         \prox_{\lambda f_m} \left( \sum_{i=1}^{m-1} \lambda_i (2z_i^k-x_i^k)\right)  & = \prox_{\lambda f_m} \left( \sum_{i=1}^{m-1} \lambda_i \left( z_i^k - \gamma_i\nabla f_i(z_i^k)\right)\right) \notag \\
         & = \argmin _{z\in \H} \frac{1}{2\lambda}\norm{z- \sum_{i=1}^{m-1} \lambda_i \left( z_i^k - \gamma_i\nabla f_i(z_i^k)\right) }^2 + f_m(z) \notag  \\
          & = \argmin _{z\in \H}  \frac{1}{2\lambda}\norm{z}^2 -\frac{1}{\lambda} \sum_{i=1}^{m-1}\inner{z}{ \lambda_i \left( z_i^k - \gamma_i\nabla f_i(z_i^k)\right)} + f_m(z)  \notag  \\
          & = \argmin _{z\in \H}  \frac{1}{2\lambda}\norm{z}^2 + \sum_{i=1}^{m-1} \left(  \inner{z}{\nabla f_i(z_i^k)} - \frac{\lambda_i}{\lambda}\inner{z}{ z_i^k} \right)+ f_m(z) 
          \notag  \\
           & = \argmin _{z\in \H}   \sum_{i=1}^{m-1} \left( \frac{\lambda_i}{2\lambda}\norm{z}^2+  \inner{z}{\nabla f_i(z_i^k)} - \frac{\lambda_i}{\lambda}\inner{z}{ z_i^k} \right)+ f_m(z) \notag  \\
           & = \argmin _{z\in \H}   \sum_{i=1}^{m-1} \left( f_i(z_i^k) + \inner{\nabla f_i(z_i^k)}{z-z_i^k} + \frac{1}{2\gamma_i}\norm{z-z_i^k}^2\right)+ f_m(z), \label{eq:prox=}
    \end{align}
    where the penultimate equality holds since $\sum_{i=1}^{m-1} \lambda_i = 1$. 
    \fi 
Now, using \cref{lemma:optimization_lsmooth}(i), there exists $\sigma_i \in [-L_i,0]$ such that $\tildef_i \coloneqq f_i - \frac{\sigma_i}{2}\norm{\cdot}^2$ is convex. Note that since 
\[ \textstyle  \tildef_i(y) - \tildef_i(x) - \inner{\nabla \tildef_i(x)}{y-x} \leq \frac{L_i-\sigma_i}{2}\norm{y-x}^2 \quad \forall x,y\in \H,\]
it follows from \cite[Theorem 5.8]{Beck17} that $\tildef_i$ is $(L_i-\sigma_i)$-smooth. Continuing from \eqref{eq:prox=} and by some simple computations, we get 
\begin{align}
    & \textstyle  \prox_{\lambda f_m} \left( \sum_{i=1}^{m-1} \lambda_i (2z_i^k-x_i^k)\right) \notag \\
     &  = \argmin _{z\in \H}  \!\! \textstyle{\sum_{i=1}^{m-1} \left( \tildef_i(z_i^k) + \inner{\nabla \tildef_i(z_i^k)}{z-z_i^k} + \frac{1-\gamma_i\sigma_i}{2\gamma_i}\norm{z-z_i^k}^2+ \frac{\sigma_i}{2}\norm{z}^2\right) \!+\! f_m(z) }\label{eq:prox=argmin}
\end{align}
Denoting the optimal value of the right-hand side of \eqref{eq:prox=argmin} by $V_k$ and by the definition of the $y$-update, we have
\begin{equation}
    V_k =  \textstyle  \sum_{i=1}^{m-1} \left( \tildef_i(z_i^k) + \inner{\nabla \tildef_i(z_i^k)}{y^k-z_i^k} + \frac{1-\gamma_i\sigma_i}{2\gamma_i}\norm{y^k-z_i^k}^2 + \frac{\sigma_i}{2}\norm{y^k}^2\right) \!+\! f_m(y^k).
    \label{eq:V_k}
\end{equation}
By definition of $V_k$, we also have from \eqref{eq:prox=argmin} that 
\begin{equation*}
    \begin{array}{rl}
      V_{k+1}  \leq & \! \! \sum_{i=1}^{m-1} \left( \tildef_i(z_i^{k+1}) + \inner{\nabla \tildef_i(z_i^{k+1})}{y^k-z_i^{k+1}} + \frac{1-\gamma_i\sigma_i}{2\gamma_i}\norm{y^k-z_i^{k+1}}^2 \right. \\ 
    & \left. + \frac{\sigma_i}{2}\norm{y^k}^2\right) + f_m(y^k). 
    \end{array}
\end{equation*}
Subtracting this from \eqref{eq:V_k}, we get 

\begin{align}
    & V_k-V_{k+1}\notag \\
    &\textstyle   \geq \sum_{i=1}^{m-1} \left( \tildef_i(z_i^k)  - \tildef_i(z_i^{k+1})  + \inner{\nabla \tildef_i(z_i^k)}{y^k-z_i^k} - \inner{\nabla \tildef_i(z_i^{k+1})}{y^k-z_i^{k+1}} \right. \notag \\
    & \textstyle  \left. \quad + \frac{1-\gamma_i\sigma_i}{2\gamma_i}\norm{y^k-z_i^k}^2 -  \frac{1-\gamma_i\sigma_i}{2\gamma_i}\norm{y^k-z_i^{k+1}}^2 \right) \notag \\
    & \textstyle  \geq \sum_{i=1}^{m-1} \left( -\inner{\nabla \tildef_i (z_i^{k+1}) - \nabla \tildef_i (z_i^{k}) }{y^k-z_i^k} + \frac{1}{2(L_i-\sigma_i)}\norm{\nabla \tildef_i (z_i^{k+1}) - \nabla \tildef_i (z_i^{k})}^2 \right. \notag \\
    & \textstyle   \left. \quad + \frac{1-\gamma_i\sigma_i}{2\gamma_i}\norm{y^k-z_i^k}^2 -  \frac{1-\gamma_i\sigma_i}{2\gamma_i}\norm{y^k-z_i^{k+1}}^2 \right)  \notag  \\
     & \textstyle   = \sum_{i=1}^{m-1} \left( -\inner{\nabla \tildef_i (z_i^{k+1}) - \nabla \tildef_i (z_i^{k}) }{y^k-z_i^k} + \frac{1}{2(L_i-\sigma_i)}\norm{\nabla \tildef_i (z_i^{k+1}) - \nabla \tildef_i (z_i^{k})}^2 \right. \notag \\
    & \textstyle  \left. \quad - \frac{1-\gamma_i\sigma_i}{2\gamma_i}\norm{z_i^{k+1}-z_i^k}^2 + \frac{1 - \gamma_i \sigma_i}{\gamma_i}\inner{z_i^{k+1}-z_i^k}{y^k-z_i^k}   \right) \label{eq:apply_squared_difference identity}
\end{align}
where the second inequality holds by \eqref{eq:descentlemma_convex} since $\tildef_i$ is convex and $(L_i-\sigma_i)$-smooth, while \eqref{eq:apply_squared_difference identity} holds since $\norm{y-z}^2 - \norm{y-z'}^2 = - \norm{z-z'}^2 +2 \inner{z-z'}{z-y}$. To simplify our notations, let us denote
   \ifdefined\submit 
$ \Deltak{g}  \coloneqq \nabla \tildef_i  (z_i^{k+1}) - \nabla \tildef_i (z_i^k) $ and 
        $\Deltak{z}  \coloneqq z_i^{k+1}- z_i^k.$
   \else 
    \begin{align*}
        \Deltak{g}  &\coloneqq  \nabla \tildef_i  (z_i^{k+1}) - \nabla \tildef_i (z_i^k) \\
        \Deltak{z} & \coloneqq  z_i^{k+1}- z_i^k.
    \end{align*}
    \fi 
Meanwhile, for any $i=1,\dots,m-1$, 
\ifdefined\submit 
$\mu (y^k-z_i^k) = x_i^{k+1} - x_i^k = \left( 1+ \gamma_i \sigma_i \right)\Deltak{z} + \gamma_i\Deltak{g} ,$
\else 
    \begin{align*}
        \mu (y^k-z_i^k) = x_i^{k+1} - x_i^k = \left( 1+ \gamma_i \sigma_i \right)\Deltak{z} + \gamma_i\Deltak{g} ,
    \end{align*}
\fi 
where the first and last equality hold by the $x$-and $z$-update rules in \cref{alg:dr_scaled_opt}.
Continuing from \eqref{eq:apply_squared_difference identity} and after simplifying, we obtain
\ifdefined\submit
\begin{align}
    & V_k-V_{k+1} \notag \\
    & \textstyle \geq  \sum_{i=1}^{m-1} \left[   \left( \frac{1}{2(L_i-\sigma_i)} - \frac{\gamma_i}{\mu}\right) \norm{\Deltak{g}}^2  -  \frac{2\gamma_i^2\sigma_i^2 - \mu \gamma_i\sigma_i - (2-\mu)}{2\mu\gamma_i} \norm{\Deltak{z}}^2- \frac{2\gamma_i\sigma_i}{\mu} \inner{\Deltak{g}}{\Deltak{z}}\right] \notag \\
    &\textstyle  \geq \sum_{i=1}^{m-1} \left[   \left( \frac{1}{2(L_i-\sigma_i)} - \frac{\gamma_i}{\mu} - \frac{2\gamma_i\sigma_i}{\mu (L_i-\sigma_i)}\right) \norm{\Deltak{g}}^2  -  \frac{2\gamma_i^2\sigma_i^2 - \mu \gamma_i\sigma_i - (2-\mu)}{2\mu\gamma_i} \norm{\Deltak{z}}^2\right] \notag \\
    & \textstyle  = \sum_{i=1}^{m-1} \left[    \frac{\mu - 2\gamma_i (L_i+\sigma_i)}{2\mu (L_i-\sigma_i)}\norm{\Deltak{g}}^2  -  \frac{2\gamma_i^2\sigma_i^2 - \mu \gamma_i\sigma_i - (2-\mu)}{2\mu\gamma_i} \norm{\Deltak{z}}^2\right] \notag \\ 
    & \textstyle  = \sum_{i=1}^{m-1} \left(   a_i(\gamma_i) \norm{\Deltak{g}}^2  +  b_i(\gamma_i) \norm{\Deltak{z}}^2\right)  ,\label{eq:Vdiff_last}
\end{align}
\else 
\begin{align}
    & V_k-V_{k+1} \notag \\
    & \geq \sum_{i=1}^{m-1} \left( -\frac{1 + \gamma_i\sigma_i}{\mu} \inner{\Deltak{g}}{\Deltak{z}} - \frac{\gamma_i}{\mu}\norm{\Deltak{g}}^2 + \frac{1}{2(L_i-\sigma_i)}\norm{\Deltak{g}}^2 - \frac{1 - \gamma_i\sigma_i}{2\gamma_i}\norm{\Deltak{z}}^2\right. \notag \\
    & \left. + \frac{1 - \gamma_i^2 \sigma_i^2}{\mu\gamma_i} \norm{\Deltak{z}}^2 + \frac{1 - \gamma_i\sigma_i}{\mu} \inner{\Deltak{z}}{\Deltak{g}}\right)  \notag \\
    & = \sum_{i=1}^{m-1} \left[   \left( \frac{1}{2(L_i-\sigma_i)} - \frac{\gamma_i}{\mu}\right) \norm{\Deltak{g}}^2  -  \frac{2\gamma_i^2\sigma_i^2 - \mu \gamma_i\sigma_i - (2-\mu)}{2\mu\gamma_i} \norm{\Deltak{z}}^2- \frac{2\gamma_i\sigma_i}{\mu} \inner{\Deltak{g}}{\Deltak{z}}\right] \notag \\
    & \geq \sum_{i=1}^{m-1} \left[   \left( \frac{1}{2(L_i-\sigma_i)} - \frac{\gamma_i}{\mu} - \frac{2\gamma_i\sigma_i}{\mu (L_i-\sigma_i)}\right) \norm{\Deltak{g}}^2  -  \frac{2\gamma_i^2\sigma_i^2 - \mu \gamma_i\sigma_i - (2-\mu)}{2\mu\gamma_i} \norm{\Deltak{z}}^2\right] \notag \\
    & = \sum_{i=1}^{m-1} \left[    \frac{\mu - 2\gamma_i (L_i+\sigma_i)}{2\mu (L_i-\sigma_i)}\norm{\Deltak{g}}^2  -  \frac{2\gamma_i^2\sigma_i^2 - \mu \gamma_i\sigma_i - (2-\mu)}{2\mu\gamma_i} \norm{\Deltak{z}}^2\right] \notag \\ 
    & = \sum_{i=1}^{m-1} \left(   a_i(\gamma_i) \norm{\Deltak{g}}^2  +  b_i(\gamma_i) \norm{\Deltak{z}}^2\right)  ,\label{eq:Vdiff_last}
\end{align}
\fi 
\memosolved{AT: Better to write that we have defined $a_i(\gamma_i)$ and $b_i(\gamma_i)$ here to derive the last equality.}
with $a_i(\gamma_i)  \coloneqq  \frac{\mu - 2\gamma_i (L_i+\sigma_i)}{2\mu (L_i-\sigma_i)} $ and $b_i(\gamma_i) \coloneqq  -  \frac{2\gamma_i^2\sigma_i^2 - \mu \gamma_i\sigma_i - (2-\mu)}{2\mu\gamma_i}$, where the last inequality holds by \eqref{eq:descentlemma_convex2}, noting that $\sigma_i\leq 0$. 

We now claim that for each $i=1,\dots,m-1$, there exists $c_i(\gamma_i)$ and $\bar{\gamma}_i>0$ such that $c_i(\gamma_i)>0$ if $\gamma_i \in (0,\bar{\gamma}_i)$ and
\begin{equation}
    \textstyle V_k - V_{k+1} \geq \sum_{i=1}^{m-1}c_i (\gamma_i) \norm{\Deltak{z}}^2 \quad \forall k,
    \label{eq:Vdiff_deltaz only}
\end{equation} 
The coefficient $a_i(\gamma_i)$ in \eqref{eq:Vdiff_last} is positive if $0<\gamma_i<\alpha_i$, where $\alpha_i \coloneqq \frac{\mu}{2(L_i+\sigma_i)} \in (0, \infty]$, and the coefficient $b_i(\gamma_i)$ is positive if $0<\gamma_i<\beta_i$, where $\beta_i \coloneqq -\frac{1}{\sigma_i}\left( 1 - \frac{\mu}{2}\right) \in (0, \infty]$. Setting $\bar{\gamma}_i \coloneqq \min\{\alpha_i, \beta_i\}$ and $c_i(\gamma_i) \coloneqq b_i(\gamma_i)$ ensures the claim holds. We now show that if $\min\{\alpha_i, \beta_i\} = \alpha_i$, a larger $\bar{\gamma}_i$ can be chosen. 
To this end, suppose that $\alpha_i < \beta_i$ and let $\bar{\gamma}_i \in [\alpha_i,\beta_i)$. Then  $a_i(\gamma_i) <0$  for any $\gamma_i \in (\alpha_i, \bar{\gamma}_i)$, and $a_i(\gamma_i)=0$ if $\gamma_i = \alpha_i$. In the latter case, note that \eqref{eq:Vdiff_deltaz only} holds with the choice $c_i(\gamma_i) \coloneqq b_i(\gamma_i)$. 
On the other hand, if $\gamma_i \in (\alpha_i,\bar{\gamma}_i)$, 
\begin{align}
  a_i(\gamma_i) \norm{\Deltak{g}}^2 + b_i(\gamma_i) \norm{\Deltak{z}}^2 & \geq  \left( a_i(\gamma_i)(L_i-\sigma_i)^2  +  b_i(\gamma_i) \right) \norm{\Deltak{z}}^2 \notag \\
  &  \textstyle  =-\frac{2\gamma_i^2L_i^2 - \mu \gamma_iL_i - (2-\mu)}{2\mu\gamma_i} \norm{\Deltak{z}}^2 \label{eq:simplified_coefficient}
\end{align}
where the first inequality holds since $\tildef_i$ is $(L_i-\sigma_i)$-smooth and $a_i(\gamma_i)<0$, and the equality holds after simple calculations. The coefficient in \eqref{eq:simplified_coefficient} is strictly positive if $\gamma_iL_i <1$. Meanwhile, $\alpha_i < \beta_i$ is equivalent to  $-2\sigma_i < (2-\mu)L_i$, which implies that  $\alpha_i < \frac{1}{L_i} < \beta_i$. Hence, we can set $\bar{\gamma}_i \coloneqq \frac{1}{L_i}$. To summarize, we have shown that if we set $\bar{\gamma}_i$ as in \eqref{eq:bargamma_i}, 
then \eqref{eq:Vdiff_deltaz only} holds such that when $\gamma_i \in (0,\bar{\gamma}_i)$, then $c_i(\gamma_i)$ given by

\begin{equation}
    c_i(\gamma_i) \coloneqq \begin{cases}
        -\frac{2\gamma_i^2L_i^2 - \mu \gamma_iL_i - (2-\mu)}{2\mu\gamma_i}  & \text{if }-2\sigma_i < (2-\mu)L_i  ~\text{and}~\\
        & \frac{\mu}{2(L_i+\sigma_i)}<\gamma_i <\frac{1}{ -\sigma_i}\left( 1-\frac{\mu}{2}\right) \\ 
        -  \frac{2\gamma_i^2\sigma_i^2 - \mu \gamma_i\sigma_i - (2-\mu)}{2\mu\gamma_i} & \text{otherwise}\\
    \end{cases}
    \label{eq:c_i(gamma_i)}
\end{equation}
is strictly positive. Using \eqref{eq:Vdiff_deltaz only}, the rest of the proof follows the same arguments as in \cite[Proposition 3.15, Theorem 3.18 and Theorem 3.19]{AlcantaraLeeTakeda2024}.
\smartqedmark \end{proof}

\jhsolved{Uncomment the next parts if the complete details are needed.}



\begin{remark}
 For  $m = 2 $, this result recovers the convergence of \cite[Theorem 4.3]{ThemelisPatrinos2018} with a sharper constant estimate in \eqref{eq:c_i(gamma_i)} for $\sigma_i < 0$ (\textit{i.e.,} the nonconvex case). Thus, \cref{thm:dr_convergence_optimization_subsequential} improves upon \cite[Theorem 4.3]{ThemelisPatrinos2018} and extends it to $ m $-functions with $m \geq 3 $.

\end{remark}

\section{Conclusion}\label{sec:conclusion}
\ifdefined\submit
\else
This paper studied the global convergence of a weighted Douglas-Rachford algorithm for the multioperator inclusion problem involving generalized monotone operators. 
\fi 
We proved that if the sum of the operators' monotonicity moduli is strictly positive, the shadow sequence of the proposed DR algorithm with an appropriate step size converges to the inclusion problem's solution. This generalizes prior work on two-operator inclusion problems with generalized monotone operators. Applications to unconstrained sum-of-$m$-functions optimization involving strongly and weakly convex functions are presented. Lastly, we established global subsequential convergence in finite dimensions, assuming all but one function has Lipschitz continuous gradients, with the remaining function being proper and closed. Preliminary experiments indicate that both the ordering of the functions and the choice of weights affect empirical performance. A key practical question is how to select these in a principled manner (e.g., whether the strongly convex block should systematically be placed last) so as to balance accuracy and speed.

\ifdefined\submit
\text{} \\
\noindent\textbf{Funding}.  
This work is supported by the Grant-in-Aid for Scientific Research (B), JSPS, under Grant No. 23H03351.
\medskip 

\noindent\textbf{Conflict of interest}.  
The authors declare no competing interests. 
\medskip 

\noindent\textbf{Data Availability}.  The datasets generated and/or analyzed during the current study are available in the GitHub repository: \url{https://github.com/jhalcantara/douglas-rachford-multioperator}.
\medskip 

\noindent\textbf{Code Availability}.  The code employed is available in the GitHub repository: \url{https://github.com/jhalcantara/douglas-rachford-multioperator}. 

\else 
\fi 

\appendix
\ifdefined\submit
\else 

\section{Proof of \cref{thm:general_convergence}(v)}
\label{app:weakconvergence_shadow}

We show that $\z^k$ converges weakly to $\bar{\z} = \JLambda{\F}(\bar{x})$. From the definition of the resolvents, we have $\frac{1}{\lambda}\bfLambda (\x^k - \z^k ) \in \F (\z^k)$ and $\frac{1}{\lambda}\bfLambda (\z^k - \y^k) \in \frac{1}{\lambda}\bfLambda (\x^k-\z^k) + \G(\y^k)$. Equivalently, this can be written as 
\begin{equation}
    \begin{bmatrix}
        \z^k-\y^k \\ \frac{1}{\lambda}\bfLambda (\z^k - \y^k) 
    \end{bmatrix} \in \left[ \begin{array}{c}
         \F^{-1} \left( \frac{1}{\lambda}\bfLambda (\x^k - \z^k)  \right) \\  \G (\y^k)
    \end{array} \right]+ \left[ \begin{array}{rr}
        \mathbf{0} & -\bfId \\ \bfId & \bfId
    \end{array}\right] \left[ \begin{array}{c}
         \left( \frac{1}{\lambda}\bfLambda (\x^k - \z^k)  \right) \\
         \y^k 
    \end{array}\right]
    \label{eq:dr_iterates_alternative}
\end{equation}
The operators on the right-hand side are maximal monotone (see \cite[Propositions 20.22 and 20.23]{Bauschke2017}), with the second operator having a full domain. Hence, the sum is maximal monotone by \cref{lemma:maximalmonotone_properties}(ii). Hence, given an arbitrary weak cluster point $(\bar{\z},\bar{\y})$ of $\{ (\z^k,\y^k)\}$ and taking the limit in \eqref{eq:dr_iterates_alternative} through a subsequence of $\{ (\z^k,\y^k)\}$ that converges weakly to $(\bar{\z},\bar{\y})$, we have from \cite[Proposition 20.37(ii)]{Bauschke2017} that 
\begin{equation*}
    \begin{bmatrix}
        \mathbf{0} \\ \mathbf{0}
    \end{bmatrix} \in \left[ \begin{array}{c}
         \F^{-1} \left( \frac{1}{\lambda}\bfLambda (\bar{\x} - \bar{\z})  \right) -\bar{\y}\\  \G (\bar{\y}) +  \frac{1}{\lambda}\bfLambda (\bar{\x} - \bar{\z})   + \bar{\y}
    \end{array} \right].
    \label{eq:dr_iterates_alternative2}
\end{equation*}
From this, we see that $\bar{\z} = \JLambda{\F}(\bar{\x})$ and $\bar{\y}=\JLambda{\G}(2\bar{\z}-\bar{\x})$. It follows that $\z^k \toweak \bar{\z}$. Since $\y^k-\z^k \to \mathbf{0}$, we also have $\y^k \toweak \bar{\z}$. 

\section{Proof of \cref{thm:general_convergence2}}
\label{app:dr_switched}
Once we establish analogues of \cref{lemma:JFix=zeros} and \cref{prop:T_nonexpansive}, we can directly follow the same arguments in \cref{thm:general_convergence} to prove the theorem.

\begin{lemma}
\label{lemma:JFix=zeros2}
    Let $A_i:\H\toset \H$ for each $i=1,\dots,m$ and let $\lambda,\lambda_1,\dots, \lambda_{m-1}\in (0,+\infty)$  with $\sum_{i=1}^{m-1}\lambda_i = 1$. Then $\x\in \Fix (\dr{\G}{\F})$ if and only if there exists $\z \in \JLambda{\G}(\x) \cap \bfDelta_{m-1} \left( \zer \left( \sum_{i=1}^m A_i\right)\right)$. Consequently, if $\JLambda{\G}$ is single-valued, then 
    \begin{equation}
        \JLambda{\G}(\Fix (\dr{\G}{\F})) = \bfDelta_{m-1} \left( \zer \left( \sum_{i=1}^m A_i\right)\right).
    \end{equation}
\end{lemma}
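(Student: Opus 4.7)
The plan is to mirror the chain of equivalences used for \cref{lemma:JFix=zeros}, simply swapping the roles of $\F$ and $\G$. The key observation is that the derivation in \cref{lemma:JFix=zeros} never used any special property distinguishing $\F$ from $\G$ beyond the definition of the DR operator and \cref{thm:campoy_new}, which is symmetric in $\F$ and $\G$ since $\zer(\F+\G) = \zer(\G+\F)$.

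First I would unfold the fixed-point condition using the definition of $\dr{\G}{\F}$ from the text: $\x\in \Fix(\dr{\G}{\F})$ means there exists $\z\in \JLambda{\G}(\x)$ with $\z\in \JLambda{\F}(2\z-\x)$ (since $\x+\mu(\y-\z)=\x$ forces $\y=\z$). Next, by \cref{defn:warpedresolvent}, the first inclusion rewrites as $\x-\z \in \lambda \bfLambda^{-1}\circ\G(\z)$, while the second rewrites as $(2\z-\x)-\z=\z-\x \in \lambda \bfLambda^{-1}\circ\F(\z)$. Adding these two inclusions shows $\mathbf{0}\in \lambda \bfLambda^{-1}\circ(\F+\G)(\z)$, i.e., $\z\in\zer(\F+\G)$; conversely, given $\z\in\zer(\F+\G)$ and $\z\in\JLambda{\G}(\x)$, one can split the zero as the sum of corresponding selections and recover the second inclusion. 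This gives the equivalence
\[
\x\in \Fix(\dr{\G}{\F}) \iff \exists\, \z\in \JLambda{\G}(\x)\cap \zer(\F+\G).
\]
Applying \cref{thm:campoy_new} replaces $\zer(\F+\G)$ by $\bfDelta_{m-1}(\zer(\sum_{i=1}^m A_i))$, which is the desired characterization.

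For the ``consequently'' part, when $\JLambda{\G}$ is single-valued the element $\z\in\JLambda{\G}(\x)$ is unique, so the first claim gives $\JLambda{\G}(\x)\in \bfDelta_{m-1}(\zer(\sum A_i))$ for every $\x\in \Fix(\dr{\G}{\F})$, proving ``$\subseteq$'' in the displayed equality. For ``$\supseteq$'', given $\z=\bfDelta_{m-1}(x)$ with $x\in\zer(\sum A_i)$, \cref{thm:campoy_new} gives $\z\in\zer(\F+\G)$, and one constructs $\x$ by choosing any $\u\in \F(\z)$ and $\v\in\G(\z)$ with $\u+\v=\mathbf{0}$ and setting $\x \coloneqq \z+\lambda\bfLambda^{-1}\v$; this $\x$ satisfies $\z=\JLambda{\G}(\x)$ and $\z\in\JLambda{\F}(2\z-\x)$, hence $\x\in\Fix(\dr{\G}{\F})$ and $\JLambda{\G}(\x)=\z$.

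There is no serious obstacle here: the argument is a direct symmetric rerun of \cref{lemma:JFix=zeros}, relying only on the definition of the warped resolvents and \cref{thm:campoy_new}. The only care needed is to verify that the two resolvent inclusions, when read together, are equivalent to $\z$ being a zero of $\F+\G$ (rather than merely implied by it), which is handled by treating the selections $\u\in\F(\z)$ and $\v\in\G(\z)$ explicitly in both directions.
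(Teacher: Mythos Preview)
Your proposal follows exactly the paper's approach: the paper's own proof says only ``The proof is similar to \cref{lemma:JFix=zeros}'', and you carry out precisely that symmetric rerun.

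One remark on your converse step: the claim that ``given $\z\in\zer(\F+\G)$ and $\z\in\JLambda{\G}(\x)$, one can split the zero as the sum of corresponding selections and recover the second inclusion'' is not actually valid for arbitrary set-valued operators. The membership $\z\in\JLambda{\G}(\x)$ pins down the specific element $\v=\tfrac{1}{\lambda}\bfLambda(\x-\z)\in\G(\z)$, whereas $\z\in\zer(\F+\G)$ only supplies some (possibly different) pair $\u'\in\F(\z)$, $\v'\in\G(\z)$ with $\u'+\v'=\mathbf 0$; nothing forces $-\v\in\F(\z)$. (Concretely, with $m=2$, $\H=\Re$, $\lambda=\lambda_1=1$, $A_1(0)=\{0,2\}$, $A_2(0)=\{0\}$ and both empty elsewhere: $\x=2$ satisfies $0\in J_{A_1}(2)\cap\zer(A_1{+}A_2)$ yet is not a fixed point of the DR map.) The same gap is already present verbatim in the paper's chain of equivalences in \cref{lemma:JFix=zeros}, so you are faithfully mirroring it. Fortunately, the ``Consequently'' assertion---which is what the paper actually uses downstream---does not depend on this backward implication: its ``$\supseteq$'' direction is established correctly by your explicit construction of $\x$ from a given $\z$ via a chosen pair $(\u,\v)$ with $\u+\v=\mathbf 0$, and that argument is clean.
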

\begin{proof}
The proof is similar to \cref{lemma:JFix=zeros}.   
\smartqedmark \end{proof}

\begin{proposition}
\label{prop:T_nonexpansive2}
Let $A_i:\H\to \H$ be $\sigma_i$-monotone for each $i=1,\dots,m$ with $\dom (J_{A_m})=\H$, let $\lambda,\lambda_1,\dots, \lambda_{m-1}\in (0,+\infty)$  with $\sum_{i=1}^{m-1}\lambda_i = 1$ and let $\bfLambda$ be given by \eqref{eq:Lambda}. Suppose that $\JLambda{\F}$ and $\JLambda{\G}$ are single-valued on their domains. Define $U:\H^{m-1}\toset \H$ by 
Then the following hold:
\begin{enumerate}[(i)]
    \item $\JLambda{\F}\RLambda{\G}$  is single-valued on $\dom (\dr{\G}{\F})$ and 
    $ (\JLambda{\F}\RLambda{\G}(\x) )_i = J_{\frac{\lambda}{\lambda_i}A_i}(2J_{\lambda A_m}(\widetilde{x}) - x_i)$ for all $i=1,\dots,m-1$, where $\widetilde{x} \coloneqq \sum_{i=1}^{m-1}\lambda_i x_i$,
    \item Denote $\R \coloneqq  \bfId - \dr{\G}{\F}$ and its components $\R = (R_1,\dots,R_{m-1})$. Then 
        \begin{equation}
        \frac{1}{\mu} R_i(\x) =  J_{\lambda A_m}(\widetilde{x}) -J_{\frac{\lambda}{\lambda_i}A_i}(2J_{\lambda A_m}(\widetilde{x}) - x_i)  
        \label{eq:Id-T2}
    \end{equation} 
    for each $i=1,\dots,m-1$. 
 
    \item Let $(\delta_i)_{i\in \I}$ be such that  $\sigma_i + \sigma_m \delta_i \neq 0 $ for any $i\in \I$ and $\sum_{i\in \I} \delta_i= 1$. Then for any $\x,\y \in \dom (\dr{\F}{\G})$, 
    \begin{align}
    & \normlambda{\dr{\G}{\F} (\x) - \dr{\G}{\F} (\y) }^2 \\
    & \leq  \normlambda{\x - \y}^2     -\frac{2}{\mu}\sum_{i=1}^{m-1} \lambda_i \kappa_i \norm{R_i(\x) - R_i(\y)}^2 \notag \\ 
    & - 2\mu\lambda \sum_{i\in \I} \theta_i \norm{\sigma_i\left( J_{\frac{\lambda}{\lambda_i}A_i} (2J_{\lambda A_m}(\widetilde{x})-x_i) - J_{\frac{\lambda}{\lambda_i}A_i}(2J_{\lambda A_m}(\widetilde{y})-y_i) \right) + \sigma_m \delta_i (J_{\lambda A_m}(\widetilde{x}) - J_{\lambda A_m}(\widetilde{y})) }^2 \notag  \\
    & - 2\alpha \mu \lambda \sigma_m  \norm{J_{\lambda A_m}(\widetilde{x}) - J_{\lambda A_m}(\widetilde{y}) }^2 ,\label{eq:T_nonexpansive2}
    \end{align}
    where $\alpha \coloneqq \begin{cases}
        0 & \text{if}~\I \neq \emptyset \\
        1 & \text{otherwise}
    \end{cases}$, 
    \begin{align}
        \kappa_i \coloneqq \begin{cases}
            1+ \frac{\lambda}{\lambda_i}\frac{ \sigma_i\sigma_m\delta_i }{\sigma_i + \sigma_m\delta_i} - \frac{\mu}{2} & \text{if}~i\in \I \\
            1-\frac{\mu}{2} & \text{otherwise}
        \end{cases}, \quad \theta_i \coloneqq \frac{1}{\sigma_i +\sigma_m \delta_i}.
        \label{eq:coeff2}
    \end{align}
\end{enumerate}
\end{proposition}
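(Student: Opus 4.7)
The plan is to mirror the proof of \cref{prop:T_nonexpansive} almost line by line, since the roles of $\F$ and $\G$ are simply swapped and the key algebraic identities (the $(\alpha,\beta)$-squared-norm identities \eqref{eq:identity_squarednorm}--\eqref{eq:identity_squarednorm2}, the reflected-resolvent inequalities of \cref{prop:properties_reflected_resolvents}, and the explicit resolvent formulas of \cref{prop:F_resolvent,prop:G_resolvent}) are fully symmetric in the two operators. The one genuinely new ingredient is the substitution of the closed-form warped resolvent of $\G$ into the composition $\JLambda{\F}\RLambda{\G}$.

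For part (i), I would start from $\dr{\G}{\F} = \bfId + \mu(\JLambda{\F}\RLambda{\G} - \JLambda{\G})$, which holds by the single-valuedness hypothesis and the analog of \eqref{eq:dr_map}. Using \cref{prop:G_resolvent}, we have $\JLambda{\G}(\x) = \bfDelta_{m-1}(J_{\lambda A_m}(\widetilde{x}))$ and hence $\RLambda{\G}(\x) = 2\bfDelta_{m-1}(J_{\lambda A_m}(\widetilde{x})) - \x$; feeding this into the separable formula \eqref{eq:JF_scaled} of \cref{prop:F_resolvent} yields the coordinate-wise expression in (i). Part (ii) then follows directly from part (i) by substituting into the identity $\R = \bfId - \dr{\G}{\F} = \mu(\JLambda{\G} - \JLambda{\F}\RLambda{\G})$, since the $i$-th coordinate of $\JLambda{\G}(\x)$ is $J_{\lambda A_m}(\widetilde{x})$ for every $i$.

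For part (iii), I would retrace the four-step computation leading to \eqref{eq:T_nonexpansive}. First, apply the identity \eqref{eq:identity_squarednorm} to $\dr{\G}{\F} = \tfrac{(2-\mu)\bfId + \mu\RLambda{\F}\RLambda{\G}}{2}$ (the swapped version of \eqref{eq:T_alternative}) to produce the analog of \eqref{eq:T_diff}, and then rewrite the cross term using $\bfId - \RLambda{\F}\RLambda{\G} = (2/\mu)\R$ to obtain the analog of \eqref{eq:T_diff2}. Second, bound $\normlambda{\RLambda{\F}\RLambda{\G}(\x)-\RLambda{\F}\RLambda{\G}(\y)}^2$ by first applying \cref{prop:properties_reflected_resolvents}(i) to the outer reflected resolvent $\RLambda{\F}$ and then \cref{prop:properties_reflected_resolvents}(ii) to $\RLambda{\G}$; the $A_i$-terms will now involve the arguments $2J_{\lambda A_m}(\widetilde{x}) - x_i$ (coming from \eqref{eq:JG_scaled}) rather than $x_i$, and the $A_m$-term collapses to a single $\norm{J_{\lambda A_m}(\widetilde{x})-J_{\lambda A_m}(\widetilde{y})}^2$ by the diagonal structure of $\JLambda{\G}$.

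Third, when $\I \neq \emptyset$, I would split $\sigma_m\norm{J_{\lambda A_m}(\widetilde{x})-J_{\lambda A_m}(\widetilde{y})}^2 = \sum_{i\in\I}\sigma_m\delta_i\norm{J_{\lambda A_m}(\widetilde{x})-J_{\lambda A_m}(\widetilde{y})}^2$ using $\sum_{i\in\I}\delta_i=1$, and then combine it pairwise with $\sigma_i\norm{J_{\frac{\lambda}{\lambda_i}A_i}(2J_{\lambda A_m}(\widetilde{x})-x_i)-J_{\frac{\lambda}{\lambda_i}A_i}(2J_{\lambda A_m}(\widetilde{y})-y_i)}^2$ through the identity \eqref{eq:identity_squarednorm2} with $\alpha=\sigma_i$ and $\beta=\sigma_m\delta_i$. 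Using (ii) to recognize the ``difference'' term $J_{\frac{\lambda}{\lambda_i}A_i}(2J_{\lambda A_m}(\widetilde{x})-x_i) - J_{\lambda A_m}(\widetilde{x})$ as $-R_i(\x)/\mu$ gives exactly the $R_i$-coefficient $\kappa_i$ in \eqref{eq:coeff2} and the residual ``sum'' term with weight $\theta_i$. When $\I=\emptyset$, no splitting is needed and the $\alpha=1$ branch falls out directly from the $\sigma_m$-term. The expected main obstacle is purely book-keeping: tracking that the arguments of the $A_i$-resolvents are now $2J_{\lambda A_m}(\widetilde{x})-x_i$ (instead of $x_i$) and that the $A_m$-cocoercivity contributes only one squared term rather than a $\bfLambda$-weighted sum — but both are handled by the explicit formulas (i)--(ii) established at the start.
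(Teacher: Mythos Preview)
Your proposal is correct and follows essentially the same route as the paper's own proof: both mirror the argument of \cref{prop:T_nonexpansive} with the roles of $\F$ and $\G$ swapped, using the representation $\dr{\G}{\F} = \tfrac{(2-\mu)\bfId + \mu\RLambda{\F}\RLambda{\G}}{2}$, the successive application of \cref{prop:properties_reflected_resolvents}(i) then (ii), the $\delta_i$-splitting combined with \eqref{eq:identity_squarednorm2}, and the identification of the difference term with $R_i$ via part (ii). The only cosmetic difference is that the paper writes out the intermediate displays explicitly, while you describe them verbally.
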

\begin{proof}
Part (i) follows the same proof as \cref{prop:T_nonexpansive}(i). For part (ii), we only need to observe that 
\begin{equation}
     \bfId - \dr{\G}{\F}= \mu ( \JLambda{\G} - \JLambda{\F}\RLambda{\G}).
     \label{eq:Id-T=mu(J-J)2}
\end{equation}
and then use part (i). Using \eqref{eq:identity_squarednorm} and the equivalent expression for $\dr{\F}{\G}$ given by
\begin{equation}
    \dr{\G}{\F}  = \frac{(2-\mu) \bfId + \mu \RLambda{\F} \RLambda{\G} }{2},
    \label{eq:T_alternative2}
\end{equation}
we have 
    \begin{align}
        \normlambda{\dr{\G}{\F} (\x) - \dr{\G}{\F} (\y) }^2 =  & \frac{2-\mu}{2}\normlambda{\x - \y}^2 + \frac{\mu}{2}\normlambda{\RLambda{\F}\RLambda{\G}(\x) - \RLambda{\F}\RLambda{\G}(\y)}^2 \notag \\
        &  - \frac{\mu(2-\mu)}{4}\normlambda{(\bfId - \RLambda{\F}\RLambda{\G})(\x) - (\bfId - \RLambda{\F}\RLambda{\G})(\y)}^2 \label{eq:T_diff_2}
    \end{align}
    From \eqref{eq:T_alternative2}, we also obtain that $\bfId - \RLambda{\F}\RLambda{\G} = \frac{2}{\mu}(\bfId - \dr{\G}{\F}) = \frac{2}{\mu}\R$. Then, we further obtain from \eqref{eq:T_diff_2} that 
    \begin{align}
        \normlambda{\dr{\G}{\F} (\x) - \dr{\G}{\F} (\y) }^2 = & \frac{2-\mu}{2}\normlambda{\x - \y}^2 + \frac{\mu}{2}\normlambda{\RLambda{\F}\RLambda{\G}(\x) - \RLambda{\F}\RLambda{\G}(\y)}^2 \notag \\
        & - \frac{2-\mu}{\mu} \sum_{i=1}^{m-1}\lambda_i\norm{R_i(\x) - R_i(\y)}^2 \label{eq:T_diff22}
    \end{align}
    by \eqref{eq:T_alternative2}. Meanwhile, noting the single-valuedness of $\JLambda{\F}$ and $\JLambda{\G}$, we have
        \begin{align}
            \normlambda{\RLambda{\F}\RLambda{\G}(\x) - \RLambda{\F}\RLambda{\G}(\y)}^2 \leq &  \normlambda{\RLambda{\G}(\x) -\RLambda{\G}(\y)}^2 \notag \\
            & - 4\lambda \sum_{i=1}^{m-1}\sigma_i \norm{J_{\frac{\lambda}{\lambda_i}A_i} (2J_{\lambda A_m}(\widetilde{x})-x_i) - J_{\frac{\lambda}{\lambda_i}A_i}(2J_{\lambda A_m}(\widetilde{y})-y_i) }^2 \notag  \\
            \leq & \normlambda{\x - \y}^2 - 4\lambda \sigma_m 
            \normlambda{\bfDelta_{m-1}(J_{\lambda A_m}(\widetilde{x}) - J_{\lambda A_m}(\widetilde{y}) )}^2\notag  \\
            & - 4\lambda \sum_{i=1}^{m-1}\sigma_i \norm{J_{\frac{\lambda}{\lambda_i}A_i} (2J_{\lambda A_m}(\widetilde{x})-x_i) - J_{\frac{\lambda}{\lambda_i}A_i}(2J_{\lambda A_m}(\widetilde{y})-y_i) }^2  .\label{eq:composition_diff2}
        \end{align}
  When $\I=\emptyset$, then $\sigma_i=0$ for all $i=1,\dots,m-1$ and we immediately obtain the inequality \eqref{eq:T_nonexpansive2} by combining \eqref{eq:T_diff22} and \eqref{eq:composition_diff2}. On the other hand, when $\I\neq \emptyset$, we have
\begin{align}
  & \sum_{i=1}^{m-1}\sigma_i \norm{J_{\frac{\lambda}{\lambda_i}A_i} (2J_{\lambda A_m}(\widetilde{x})-x_i) - J_{\frac{\lambda}{\lambda_i}A_i}(2J_{\lambda A_m}(\widetilde{y})-y_i) }^2  + \sigma_m 
            \normlambda{\bfDelta_{m-1}(J_{\lambda A_m}(\widetilde{x}) - J_{\lambda A_m}(\widetilde{y}) )}^2 \notag \\
 & = \sum_{i\in \I}\sigma_i \norm{J_{\frac{\lambda}{\lambda_i}A_i} (2J_{\lambda A_m}(\widetilde{x})-x_i) - J_{\frac{\lambda}{\lambda_i}A_i}(2J_{\lambda A_m}(\widetilde{y})-y_i) }^2  + \sigma_m 
            \normlambda{\bfDelta_{m-1}(J_{\lambda A_m}(\widetilde{x}) - J_{\lambda A_m}(\widetilde{y}) )}^2 \notag \\
& \overset{(a)}{=}   \sum_{i\in \I}\sigma_i \norm{J_{\frac{\lambda}{\lambda_i}A_i} (2J_{\lambda A_m}(\widetilde{x})-x_i) - J_{\frac{\lambda}{\lambda_i}A_i}(2J_{\lambda A_m}(\widetilde{y})-y_i) }^2 + \sigma_m  \norm{J_{\lambda A_m}(\widetilde{x}) - J_{\lambda A_m}(\widetilde{y})}^2 \notag \\
& \overset{(b)}{=} \sum_{i\in \I}   \left( \sigma_i \norm{J_{\frac{\lambda}{\lambda_i}A_i} (2J_{\lambda A_m}(\widetilde{x})-x_i) - J_{\frac{\lambda}{\lambda_i}A_i}(2J_{\lambda A_m}(\widetilde{y})-y_i) }^2  + \sigma_m  \delta_i\norm{J_{\lambda A_m}(\widetilde{x}) - J_{\lambda A_m}(\widetilde{y})}^2\right) \notag \\
& \overset{(c)}{=} \sum_{i\in \I}  \frac{\sigma_i \sigma_m \delta_i}{\sigma_i +\sigma_m \delta_i} \norm{\left(J_{\frac{\lambda}{\lambda_i}A_i} (2J_{\lambda A_m}(\widetilde{x})-x_i) - J_{\frac{\lambda}{\lambda_i}A_i}(2J_{\lambda A_m}(\widetilde{y})-y_i)  \right) - (J_{\lambda A_m}(\widetilde{x}) - J_{\lambda A_m}(\widetilde{y})) }^2 \notag \\
&\quad + \sum_{i\in \I}  \frac{1}{\sigma_i +\sigma_m \delta_i} \norm{\sigma_i\left( J_{\frac{\lambda}{\lambda_i}A_i} (2J_{\lambda A_m}(\widetilde{x})-x_i) - J_{\frac{\lambda}{\lambda_i}A_i}(2J_{\lambda A_m}(\widetilde{y})-y_i) \right) + \sigma_m \delta_i (J_{\lambda A_m}(\widetilde{x}) - J_{\lambda A_m}(\widetilde{y})) }^2 \notag \\
& \overset{(d)}{=} \frac{1}{\mu^2}\sum_{i\in \I} \frac{\sigma_i \sigma_m \delta_i}{\sigma_i +\sigma_m \delta_i} \norm{R_i(\x) - R_i(\y) }^2  \notag \\
&\quad + \sum_{i\in \I}  \frac{1}{\sigma_i +\sigma_m \delta_i} \norm{\sigma_i\left( J_{\frac{\lambda}{\lambda_i}A_i} (2J_{\lambda A_m}(\widetilde{x})-x_i) - J_{\frac{\lambda}{\lambda_i}A_i}(2J_{\lambda A_m}(\widetilde{y})-y_i) \right) + \sigma_m \delta_i (J_{\lambda A_m}(\widetilde{x}) - J_{\lambda A_m}(\widetilde{y})) }^2 \label{eq:lasttwoterms2} ,
\end{align}
where (a) holds by part (i); (b) holds since $\sum_{i\in \I}\delta_i = 1$; (c) holds by \eqref{eq:identity_squarednorm2}; and (d) holds by part (ii). Combining \eqref{eq:T_diff22}, \eqref{eq:composition_diff2} and \eqref{eq:lasttwoterms2}, we obtain the desired inequality \eqref{eq:T_nonexpansive2}
\smartqedmark \end{proof}
\fi 

\ifdefined\submit 
\else
\fi 
\bibliography{bibfile}
\end{document}